\title{Large scale geometry of automorphism groups}
\author {Christian Rosendal}
\address{Department of Mathematics, Statistics, and Computer Science (M/C 249)\\University of Illinois at Chicago\\851 S. Morgan St.\\Chicago, IL 60607-7045\\USA}
\email{rosendal.math@gmail.com}
\urladdr{http://homepages.math.uic.edu/$~$rosendal}
\date {}
\newcommand{\norm}[1]{\lVert#1\rVert}
\newcommand {\N}{\mathbb N}
\newcommand {\OO}{\mathcal O}
\newcommand {\Q}{\mathbb Q}
\newcommand {\Z}{\mathbb Z}
\newcommand {\C}{\mathbb C}
\newcommand {\U}{\mathbb U}
\newcommand{\om}{\omega}
\newcommand{\tom} {\emptyset}
\newcommand{\inj}{\hookrightarrow}
\newcommand{\saa}{\Rightarrow}
\newcommand{\equi}{\Longleftrightarrow}
\newcommand{\til}{\rightarrow}
\newcommand {\Del}{ \; \Big| \;}
\newcommand {\del}{ \; \big| \;}
\newcommand {\ku} {\mathcal}
\newcommand{\ov}{\overline}
\newcommand{\inv}{^{-1}}
\newcommand {\e} {\exists}
\renewcommand {\a} {\forall}
\newcommand{\forkindep}[1][]{\mathop{\mathop{\vcenter{\hbox{\oalign{\noalign{\kern-.3ex}
\hfil$\vert$\hfil\cr\noalign{\kern-.7ex}$\smile$\cr\noalign{\kern-.3ex}}}}}\displaylimits_{#1}}}
\newtheorem{thm}{Theorem}
\newtheorem{cor}[thm]{Corollary}
\newtheorem{lemme}[thm]{Lemma}
\newtheorem{prop} [thm] {Proposition}
\newtheorem{defi} [thm] {Definition}
\newtheorem{prob}[thm]{Problem}
\theoremstyle{definition}
\newtheorem{exa}[thm]{Example}
\definecolor{groen}{rgb}{0,0.5,.7}
\definecolor{gul}{rgb}{0.94,0.8,0}
\definecolor{blaa}{rgb}{0.16,0,0.6}
\definecolor{roed}{rgb}{1,0,0}
\begin{document}

\keywords{Large scale geometry, automorphism groups of first-order structures}
\thanks{The author was partially supported by a grant from the Simons Foundation (Grant
\#229959) and also recognises support from the NSF (DMS 1201295).}

\maketitle

\tableofcontents

\section{Introduction}
The present paper constitutes the third part of a study of the large scale geometry of metrisable group, the first two parts appearing in \cite{large scale geom}. Whereas the first part provided the foundations for this and the second part studied affine isometric actions on Banach spaces, we here make a deeper investigation of the special case of automorphism groups of countable first-order model theoretical structures.

\subsection{Non-Archimedean Polish groups and large scale geometry}
The automorphism groups under consideration here are the automorphism groups ${\rm Aut}(\bf M)$ of countable first-order model theoretical structures. Let us first recall that a {\em Polish group} is a separable topological group, whose topology can be induced by a complete metric. Such a group $G$ is moreover said to be {\em non-Archimedean} if there is a neighbourhood basis at the identity consisting of open subgroups of $G$. 

One particular source of examples of non-Archimedean Polish groups are first-order model theoretical structures. Namely, if $\bf M$ is a countable first-order structure, e.g., a graph, a group, a field or a lattice, we equip its automorphism group ${\rm Aut}(\bf M)$ with the {\em permutation group topology}, which is the group topology obtained by declaring the pointwise stabilisers 
$$
V_A=\{g\in {\rm Aut}({\bf M})\del \a x\in A\;\;g(x)=x\}
$$
of all finite subsets $A\subseteq \bf M$ to be open. In this case, one sees that a basis for the topology on ${\rm Aut}(\bf M)$ is given by the family of cosets $fV_A$, where $f\in {\rm Aut}(\bf M)$ and $A\subseteq \bf M$ is finite.

Now, conversely, if $G$ is a non-Archimedean Polish group, then, by considering its action on the left-coset spaces $G/V$, where $V$ varies over open subgroups of $G$, one can show that $G$ is topologically isomorphic to the automorphism group ${\rm Aut}(\bf M)$ of some first order structure $\bf M$.

The investigation of non-Archimedean Polish groups via the interplay between the model theoretical properties of the structure $\bf M$ and the dynamical and topological properties of the automorphism group ${\rm Aut}(\bf M)$ is currently very active as witnessed, e.g.,  by the papers \cite{kpt, turbulence, tsankov, wap}. Our goal here is to consider another class of structural properties of ${\rm Aut}(\bf M)$ by applying the large scale geometrical methods developed in our companion paper \cite{large scale geom}. 

For this, we need to recall the fundamental notions and some of the main results of \cite{large scale geom}. So, in the following, fix a separable metrisable topological group $G$.
A subset $A\subseteq G$ is said to have {\em property (OB) relative to $G$} if $A$ has finite diameter with respect to every compatible left-invariant metric on $G$ (the existence of compatible left-invariant metrics being garanteed by the Birkhoff--Kakutani theorem). Also, $G$ is said to have {\em property (OB)} if it has property (OB) relative to itself and the {\em local property (OB)} if there is a neighbourhood $U\ni 1$ with property (OB) relative to $G$.  As it turns out,   the relative property (OB)  for $A$ is equivalent to the requirement that, for all neighbourhoods $V\ni 1$, there are a finite set $F\subseteq G$ and a $k\geqslant 1$ so that $A\subseteq (FV)^k$. 

A compatible left-invariant metric $d$ on $G$ is {\em metrically proper} if the class of $d$-bounded sets coincides with the sets having property (OB) relative to $G$. Moreover, $G$ admits a compatible metrically proper left-invariant metric if and only if it has the local property (OB).  Also, an isometric action $G\curvearrowright (X,d_X)$ on a metric space is {\em metrically proper} if, for all $x$ and $K$, the set
$\{g\in G\del d_X(gx,x)\leqslant K\}$ has property (OB) relative to $G$.

We can order the set of compatible left-invariant metrics on $G$ by setting $\partial\lesssim d$  if there is a constant $K$ so that $\partial\leqslant K\cdot d+K$. We then define a metric $d$ to be {\em maximal} if it maximal with respect to this ordering. A maximal metric is always metrically proper and conversely, if $d$ is metrically proper, then $d$ is maximal if and only if $(G,d)$ is {\em large scale geodesic}, meaning that there is a constant $K$ so that, for all $g,f\in G$, one can find a finite path $h_0=g, h_1, \ldots, h_n=f$ with $d(h_{i-1},h_{1})\leqslant K$ and $d(g,f)\leqslant K\cdot \sum_{i=1}^nd(h_{i-1}, h_i)$. Now, a maximal metric, if such exist, is unique up to quasi-isometry, where a map $F\colon (X,d_X)\til (Y, d_Y)$ between metric spaces is a {\em quasi-isometry} if there are constants $K,C$ so that 
$$
\frac 1Kd_X(x,y)-C\leqslant d_Y(Fx, Fy)\leqslant K\cdot d_X(x,y)+C
$$ 
and $F[X]$ is a $C$-net in $Y$. 

The group $G$ admits a maximal compatible left-invariant metric if and only if $G$ is generated by an open set  with property (OB) relative to $G$. So, such $G$ have a uniquely defined {\em quasi-isometric type} given by any maximal metric. Finally, we have a version of the \v{S}varc--Milnor lemma, namely, if $G\curvearrowright (X,d_X)$ is a transitive metrically proper continuous isometric action on a large scale geodesic metric space, then $G$ has a maximal metric and, for every $x\in X$, the mapping 
$$
g\in G\mapsto g(x)\in X
$$
is a quasi-isometry. Of course, one of the motivations for developing this theory is that the  quasi-isometry type, whenever defined, is an isomorphic invariant of the group $G$.


\subsection{Main results}In the following, $\bf M$ denotes a countable first-order structure.
We use $\ov a, \ov b, \ov c, \ldots$  as variables for finite tuples of elements of $\bf M$ and shall write $(\ov a, \ov b)$ to denote the concatenation of the tuples $\ov a $ and $\ov b$. The automorphism group ${\rm Aut}(\bf M)$ acts naturally on  tuples $\ov a=(a_1,\ldots, a_n)\in {\bf M}^n$ via 
$$
g\cdot (a_1, \ldots, a_n)=(ga_1, \ldots, ga_n).
$$
With this notation, the pointwise stabiliser subgroups $V_{\ov a}=\{g\in {\rm Aut}({\bf M})\del g\cdot \ov a=\ov a\}$, where $\ov a$ ranges over all finite tuples in $\bf M$, form a neighbourhood basis at the identity in ${\rm Aut}(\bf M)$. So, if $A\subseteq \bf M$ is the finite set enumerated by $\ov a$ and $\bf A\subseteq \bf M$ is the substructure generated by $A$, we have $V_{\bf A}=V_A=V_{\ov a}$.
An {\em orbital type} $\ku O$ in $\bf M$ is simply the orbit of some tuple $\ov a$ under the action of ${\rm Aut}({\bf M})$.  Also, we let $\OO(\ov a)$ denote the orbital type of $\ov a$ , i.e., $\OO(\ov a)={\rm Aut}({\bf M})\cdot \ov a$.

Our first results provides a necessary and sufficient criterion for when an automorphism group has a well-defined quasi-isometric type and associated tools allowing for concrete computations of this same type. 
\begin{thm}\label{quasiisometry intro}
The automorphism group  ${\rm Aut}({\bf M})$ admits a maximal compatible left-invariant metric if and only if there  is a tuple $\ov a$ in ${\bf M}$ satisfying the following two requirements
\begin{enumerate}
\item for every $\ov b$, there is a finite family $\ku S$ of orbital types and an $n\geqslant 1$ such that whenever $(\ov a, \ov c)\in \OO(\ov a, \ov b)$ there is a path $\ov d_0, \ldots, \ov d_n\in  \OO(\ov a, \ov b)$ with $\ov d_0=(\ov a, \ov b)$, $\ov d_n=(\ov a, \ov c)$ and $\ku O(\ov d_i,\ov d_{i+1})\in \ku S$ for all $i$,
\item there is a finite family $\ku R$ of orbital types so that, for all $\ov b\in \OO(\ov a)$, there is a finite path $\ov d_0, \ldots, \ov d_m\in \OO(\ov a)$ with $\ov d_0=\ov a$, $\ov d_m=\ov b$ and 
$\ku O(\ov d_i, \ov d_{i+1})\in  \ku R$ for all $i$. 
\end{enumerate}
Moreover, suppose $\ov a$ and $\ku R$ are as above and  let $\mathbb X_{\ov a, \ku R}$ be the connected graph on the set of vertices $\ku O(\ov a)$ with edge relation
$$
(\ov b, \ov c)\in {\rm Edge}\; \mathbb X_{\ov a, \ku R} \;\equi \; \ov b\neq \ov c\;\&\;\Big(\ku O(\ov b, \ov c)\in \ku R \;\text{ or }\;\ku O(\ov c, \ov b)\in \ku R\Big).
$$
Then the mapping 
$$
g\in {\rm Aut}({\bf M})\mapsto g\cdot \ov a \in \mathbb X_{\ov a, \ku R}
$$
is a quasi-isometry.
\end{thm}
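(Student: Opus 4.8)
The plan is to realise $\mathbb X_{\ov a,\ku R}$ as the geometry of a natural action of $G:={\rm Aut}({\bf M})$ and then invoke the \v{S}varc--Milnor lemma recalled above. First equip $\mathbb X_{\ov a,\ku R}$ with its graph (path) metric $\rho$; by hypothesis (2) the graph is connected, so $\rho$ is a genuine integer-valued metric and $(\mathbb X_{\ov a,\ku R},\rho)$ is geodesic, hence large scale geodesic. The group $G$ acts on the vertex set $\OO(\ov a)$ by $\ov b\mapsto g\cdot\ov b$, and since orbital types are by definition $G$-invariant, every $g$ preserves the edge relation; thus $G$ acts by isometries of $\rho$. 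This action is transitive on vertices (they form the single orbit $\OO(\ov a)$) and continuous, because the stabiliser of $\ov a$ is the open subgroup $V_{\ov a}$ and $\mathbb X_{\ov a,\ku R}$ is discrete. Hence the only hypothesis of the \v{S}varc--Milnor lemma requiring work is metric properness of the action, and once this is established the lemma immediately yields both a maximal metric on $G$ and the assertion that $g\mapsto g\cdot\ov a$ is a quasi-isometry.

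To verify metric properness I must show that $B_K:=\{g\in G\del \rho(g\cdot\ov a,\ov a)\leqslant K\}$ has property (OB) relative to $G$ for every $K$. Writing $S:=B_1$, a routine path-lifting argument gives $B_K\subseteq S^{K+1}$, so, since property (OB) relative to $G$ is preserved under finite products and passage to subsets (directly from the covering characterisation recalled above), it suffices to treat $S$. Now $g\in S$ exactly when $g\cdot\ov a$ equals or is $\ku R$-adjacent to $\ov a$; for each of the finitely many types in $\ku R$ the corresponding neighbours of $\ov a$ form a single $V_{\ov a}$-orbit, whence such $g$ lies in one of finitely many double cosets $V_{\ov a}\,t\,V_{\ov a}$. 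Thus $S\subseteq V_{\ov a}FV_{\ov a}$ for a finite set $F$, and everything reduces to showing that $V_{\ov a}$ itself has property (OB) relative to $G$.

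This last point is the crux, and it is precisely what hypothesis (1) is built to deliver. By the covering criterion it suffices, for each open subgroup, to produce a finite set and an exponent; since the subgroups $V:=V_{(\ov a,\ov b)}$ are cofinal among neighbourhoods of $1$, fix $\ov b$ and apply (1) to it, obtaining a finite family $\ku S$ and an integer $n$. For $v\in V_{\ov a}$ the tuple $(\ov a,v\cdot\ov b)$ lies in $\OO(\ov a,\ov b)$, so (1) yields a path of length $n$ from $(\ov a,\ov b)$ to $(\ov a,v\cdot\ov b)$ with all consecutive types in $\ku S$. Lifting this path to group elements and transporting each step back to $(\ov a,\ov b)$, I find that each step lies in $V\,u\,V$ for one of finitely many $u$ (one per type in $\ku S$, as the type-$s$ neighbours of $(\ov a,\ov b)$ form a single $V$-orbit), so that $v\in (VUV)^nV$ with $U$ finite. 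The elementary inclusion $VUV\subseteq (UV)^2$, valid once $1\in U$, then collapses this to $v\in (UV)^{2n+1}$, giving $V_{\ov a}\subseteq (UV)^{2n+1}$. As $\ov b$ was arbitrary, this is exactly property (OB) of $V_{\ov a}$ relative to $G$.

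I expect the main obstacle to be this translation of the purely combinatorial path condition (1) into the covering $V_{\ov a}\subseteq (UV)^{2n+1}$: one must check that the finiteness of $\ku S$ together with the uniform bound $n$ is exactly what forces the finite set $U$ and the exponent, and that the book-keeping of lifting graph paths into $G$ and transporting them back respects the orbital types. Granting this, metric properness holds, hypothesis (2) supplies the connectivity needed for a large scale geodesic target, and the \v{S}varc--Milnor lemma completes the argument, exhibiting $g\mapsto g\cdot\ov a$ as a quasi-isometry of $G$ onto $\mathbb X_{\ov a,\ku R}$.
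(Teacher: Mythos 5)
Your argument for the implication from (1) and (2) to the existence of a maximal metric, together with the ``moreover'' clause, is correct and is essentially the paper's own route: the paper likewise applies its \v{S}varc--Milnor lemma to the vertex-transitive action on $\mathbb X_{\ov a,\ku R}$, reduces metric properness of that action to the relative property (OB) of $V_{\ov a}$ (your path-lifting and double-coset computation is precisely the content of Lemmas \ref{S til F} and \ref{metrically proper model}), and derives that relative property (OB) from condition (1) by the same transport-back-to-$(\ov a,\ov b)$ bookkeeping (one direction of Lemma \ref{condition for OB}). The steps you flag as needing care do all go through: each orbital type in $\ku S$ or $\ku R$ determines a single $V$-orbit of neighbours, which is what produces the finite sets $U$ and $F$.

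The gap is that you prove only one implication of the stated equivalence. The theorem also asserts that if ${\rm Aut}({\bf M})$ admits a maximal compatible left-invariant metric, then some tuple $\ov a$ satisfies (1) and (2); your proposal never addresses this direction. In the paper it follows from three converse statements: first, the characterisation from the companion paper that a maximal metric exists if and only if $G$ is generated by an open set $U$ having property (OB) relative to $G$ --- since the $V_{\ov a}$ form a neighbourhood basis, one may choose $V_{\ov a}\subseteq U$, which then inherits relative property (OB) and over which $G$ is finitely generated; second, the converse of your crux step, namely that relative property (OB) of $V_{\ov a}$ implies condition (1), which requires passing from a covering $V_{\ov a}\subseteq (V_{(\ov a,\ov b)}F)^n$ with $F$ finite back to a finite family $\ku S$ of orbital types and paths of length $n$ (Lemma \ref{F til S}, the reverse of the translation you carry out); and third, the analogous fact that finite generation of $G$ over $V_{\ov a}$ yields the finite family $\ku R$ of condition (2). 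None of this is difficult, but it is not a consequence of the \v{S}varc--Milnor argument and must be supplied separately to establish the biconditional.
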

Moreover, condition (1) alone gives a necessary and sufficient criterion for ${\rm Aut}(\bf M)$ having the local property (OB) and thus a metrically proper metric.

With this at hand, we can subsequently relate the properties of the theory $T={\rm Th}(\bf M)$ of the model $\bf M$ with properties of its automorphism group and, among other results, use this to construct affine isometric actions on Banach spaces.
\begin{thm}
Suppose $\bf M$ is a countable atomic model of a stable theory $T$ so that ${\rm Aut}(\bf M)$ has the local property (OB). Then ${\rm Aut}(\bf M)$ admits a metrically proper continuous affine isometric action on a reflexive Banach space. 
\end{thm}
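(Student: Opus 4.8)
The plan is to realise the action through a cocycle, letting stability supply the reflexivity of the target. Recall that a continuous affine isometric action of $G={\rm Aut}({\bf M})$ on a Banach space $X$ is exactly a pair $(\pi,b)$ consisting of a continuous linear isometric representation $\pi\colon G\to\lin X$ and a continuous cocycle $b\colon G\to X$, meaning $b(gh)=b(g)+\pi(g)b(h)$, with action $g\cdot\xi=\pi(g)\xi+b(g)$; the action is metrically proper precisely when $\{g\in G\del\norm{b(g)}\leqslant K\}$ has property (OB) relative to $G$ for every $K$. Since reflexivity is inherited by $\ell^p$-sums for $1<p<\infty$, I would assemble $X$ and $b$ out of finitely many reflexive pieces, one for each orbital type controlling the coarse geometry of $G$, and arrange that $\norm{b(g)}$ grows with a metrically proper metric.

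Stability enters precisely in producing the reflexive summands. As ${\bf M}$ is a countable atomic model it is homogeneous, so two tuples lie in the same orbit if and only if they realise the same, necessarily isolated, type; hence the orbital types---and in particular the edge-types occurring in the finite families $\ku S$ of Theorem~\ref{quasiisometry intro}(1)---are cut out by first-order formulas. By Grothendieck's double-limit criterion, stability of $T$ is equivalent to the statement that each formula $\phi(\ov x,\ov y)$ yields a weakly almost periodic function $g\mapsto\1\big[{\bf M}\models\phi(g\ov a,\ov b)\big]$ on $G$, and weakly almost periodic functions are exactly the matrix coefficients of continuous isometric representations on reflexive Banach spaces. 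I would use this to attach to each relevant orbital type a reflexive representation $\pi$ of $G$ carrying a distinguished coefficient vector. This is also why the target is reflexive rather than Hilbert: stability delivers weak almost periodicity, not positive-definiteness, so one should not expect a proper action on Hilbert space.

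To build the cocycle I would invoke the local property (OB). By the criterion recorded after Theorem~\ref{quasiisometry intro}, condition (1) there is witnessed by a tuple $\ov a$ whose pointwise stabiliser $V_{\ov a}$ has property (OB) relative to $G$, and correspondingly there is a compatible metrically proper left-invariant metric $d$ whose bounded sets are the relatively-(OB) sets. The orbit $\OO(\ov a)\cong G/V_{\ov a}$ carries the orbital edge-types of condition (1), and $d(1,g)$ should be controlled by a path length from $\ov a$ to $g\cdot\ov a$ built from these types. Combining the reflexive representations of the previous step into an $\ell^p$-sum $X$, I would seek a formal, in general unbounded almost-invariant vector $\xi\notin X$ that encodes this orbital edge structure at $\ov a$ and satisfies $\pi(g)\xi-\xi\in X$ for every $g$; then $b(g)=\pi(g)\xi-\xi$ is a genuine cocycle, and the goal is that $\norm{b(g)}$ be comparable to $d(1,g)$, and hence proper.

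The main obstacle is exactly this comparison, and it is here that stability does real work. The orbit $\OO(\ov a)$ need \emph{not} be locally finite for these edge-types---a tuple may have infinitely many neighbours of a given orbital type---so a naive indicator $\xi$ would make $\pi(g)\xi-\xi$ of infinite $\ell^p$-norm, and no permutation representation on $\ell^p(\OO(\ov a))$ will serve. It is the weak almost periodicity furnished by stability that allows these infinite contributions to be renormalised inside a reflexive space, while the finiteness of the families $\ku S$ in condition (1) is what must be leveraged to convert the path bound into a uniform lower estimate $\norm{b(g)}\gtrsim d(1,g)$. Securing this lower bound while simultaneously keeping each $b(g)$ of finite norm---so that the cocycle is continuous and well defined and the representation remains reflexive---is the crux of the argument.
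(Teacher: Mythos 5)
Your proposal is a plan rather than a proof: you yourself flag the decisive step --- constructing a cocycle $b$ with $\norm{b(g)}$ both finite for each $g$ and bounded below by a proper metric --- as ``the crux,'' and nothing in the proposal resolves it. That is precisely the part that cannot be waved through. A permutation-type vector $\xi$ on $\OO(\ov a)$ fails for the reason you note (no local finiteness), and ``weak almost periodicity allows these infinite contributions to be renormalised inside a reflexive space'' is not an argument; attaching a reflexive representation with a distinguished matrix coefficient to each edge-type gives you, at best, bounded coefficients, and there is no mechanism in the proposal for converting finitely many bounded WAP functions into an unbounded, properly growing cocycle. So as written the proof does not close.

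The paper takes a different and essentially softer route at this level: it never builds the cocycle here. Instead it upgrades the metrically proper compatible left-invariant metric to one that is \emph{stable} in the sense of Krivine--Maurey (commuting double limits on bounded sequences), and then invokes the general theorem from \cite{large scale geom} that a metrically proper stable compatible metric yields a metrically proper affine isometric action on a reflexive space --- that black box is where all the analytic difficulty you identify actually lives. The two inputs you are missing are: (i) Lemma \ref{definability}, which uses atomicity to show that the relations $\rho_{\ov a, (\ku S_m)}(\ov b,\ov c)\leqslant n$ (the graph distances themselves, not merely the individual orbital types) are $\tom$-definable --- this requires writing the existence of a path of controlled weighted length as a single first-order formula; and (ii) Lemma \ref{existence stable metric}, which shows that a metric with $\tom$-definable distance relations in a model of a stable theory is a stable metric, via the no-order-property formulation of stability rather than via Grothendieck/WAP. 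Your instinct that stability should produce reflexivity through weak almost periodicity is pointing at the right phenomenon, but the workable formulation of it here is ``the proper metric can be chosen stable,'' and that reduction is the actual content of the proof you were asked to supply.
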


Though Theorem \ref{quasiisometry intro} furnishes an equivalent reformulation of admitting a metrically proper or maximal compatible left-invariant metric, it is often useful to have more concrete instances of this. A particular case of this, is when $\bf M$ admits an obital $A$-independence relation, $\forkindep[A]$, that is, an independence relation over a finite subset $A\subseteq \bf M$ satisfying the usual properties of symmetry, monotonicity, existence and stationarity (see Definition \ref{indep rel} for a precise rendering).
In particular, this applies to  the Boolean algebra of clopen subsets of Cantor space with the dyadic probability measure and to the $\aleph_0$-regular tree.
\begin{thm}
Suppose $A$ is a finite subset of a countable structure $\bf M$ and $\forkindep[A]$ an orbital $A$-independence relation. Then the pointwise stabiliser subgroup $V_A$ has property (OB). Thus, if $A=\tom$, the automorphism group ${\rm Aut}(\bf M)$ has  property (OB) and, if $A\neq \tom$, ${\rm Aut}(\bf M)$ has the local property (OB). 
\end{thm}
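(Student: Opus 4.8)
The plan is to verify directly that $V_A$ satisfies the relative property (OB) criterion recalled in the introduction: for every neighbourhood $V\ni 1$ one must exhibit a finite set $F\subseteq {\rm Aut}(\bf M)$ and a $k\geqslant 1$ with $V_A\subseteq (FV)^k$. Since the pointwise stabilisers $V_{\ov b}$ form a neighbourhood basis at the identity, it suffices to treat $V=V_{\ov b}$ for an arbitrary tuple $\ov b$ enumerating a finite subset containing $A$; I aim to produce a decomposition with $k=3$ and $|F|=3$. Throughout I write $\ov u\equiv_A\ov v$ to mean that $\ov u$ and $\ov v$ lie in the same $V_A$-orbit, i.e., have the same orbital type over $A$, and I use that $\forkindep[A]$, being defined on orbital types over $A$, is automatically invariant under the action of $V_A$.

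First I would fix, once and for all, a reference copy of $\ov b$: by existence there is $\ov e_0\equiv_A\ov b$ with $\ov b\forkindep[A]\ov e_0$, and I choose a fixed $w\in V_A$ with $w\ov b=\ov e_0$, so that $V_{\ov e_0}=wV_{\ov b}w\inv$. The central observation, furnished by stationarity, is that an $A$-independent pair of copies of $\ov b$ occupies a single $V_A$-orbit: if $\ov e,\ov e'\equiv_A\ov b$ with $\ov b\forkindep[A]\ov e$ and $\ov b\forkindep[A]\ov e'$, then $(\ov b,\ov e)\equiv_A(\ov b,\ov e')$, and the witnessing element of $V_A$ necessarily fixes $\ov b$, hence lies in $V_{\ov b}$.

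The main step is then the following. Given an arbitrary $g\in V_A$, set $\ov c=g\ov b$, so $\ov c\equiv_A\ov b$. Applying existence to the concatenation $(\ov b,\ov c)$, I obtain $\ov e\equiv_A\ov b$ with $(\ov b,\ov c)\forkindep[A]\ov e$; by monotonicity and symmetry both $\ov b\forkindep[A]\ov e$ and $\ov c\forkindep[A]\ov e$ hold. Transporting the latter by $g\inv\in V_A$ gives $\ov b\forkindep[A]g\inv\ov e$ with $g\inv\ov e\equiv_A\ov b$. By the central observation there are $s,t\in V_{\ov b}$ with $s\ov e_0=\ov e$ and $t\ov e_0=g\inv\ov e$. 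Then $s\ov e_0=\ov e=gt\ov e_0$, so $s\inv gt\in V_{\ov e_0}=wV_{\ov b}w\inv$; writing $s\inv gt=wvw\inv$ with $v\in V_{\ov b}$ yields $g=s\,w\,v\,w\inv t\inv\in (FV_{\ov b})^3$ for the fixed finite set $F=\{1,w,w\inv\}$. Since $g$ was arbitrary, $V_A\subseteq (FV_{\ov b})^3$, and as $\ov b$ was arbitrary, $V_A$ has property (OB) relative to ${\rm Aut}(\bf M)$. The two conclusions then follow at once: when $A=\tom$ one has $V_A={\rm Aut}(\bf M)$, so the whole group has property (OB); when $A\neq\tom$, $V_A$ is an open neighbourhood of $1$ with property (OB), so ${\rm Aut}(\bf M)$ has the local property (OB).

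I expect the main obstacle to be the correct orchestration of the four axioms so as to obtain a single midpoint $\ov e$ that is $A$-independent from $\ov b$ and $\ov c$ simultaneously — this is what forces the appeal to existence for the concatenated tuple $(\ov b,\ov c)$ followed by monotonicity — together with the verification that stationarity genuinely collapses all $A$-independent pairs of copies of $\ov b$ into one $V_A$-orbit, which is precisely what allows the connecting elements $s,t$ to be chosen inside $V_{\ov b}$. Once these points are secured the decomposition is purely formal, and the argument incidentally realises condition (1) of Theorem \ref{quasiisometry intro} (with $\ov a$ enumerating $A$) through the length-two path $\ov b,\ov e,\ov c$, whose two edges both carry the unique $A$-independent orbital type.
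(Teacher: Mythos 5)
Your overall strategy is the same as the paper's: reduce to basic neighbourhoods $V_B$ with $B\supseteq A$, use existence to produce a copy of $B$ independent over $A$ from $B\cup gB$, and then use stationarity twice to write $g$ as a bounded product of conjugates of $V_B$ by a single fixed element realising the independent configuration. Your decomposition $V_A\subseteq (FV_{\ov b})^3$ with $F=\{1,w,w\inv\}$ is exactly parallel to the paper's $V_A=V_BFV_BFV_B$ with $F=\{f,f\inv\}$. However, one step is not justified by Definition \ref{indep rel}: the ``transport'' of $\ov c\forkindep[A]\ov e$ by $g\inv$ to obtain $\ov b\forkindep[A]g\inv\ov e$. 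You assert that $\forkindep[A]$ is ``automatically invariant under the action of $V_A$'', but invariance is not among the axioms (i)--(iv) and does not follow from them: existence only demands \emph{one} independent representative in each $V_A$-orbit, and stationarity only constrains pairs already declared independent, so one can delete individual pairs from, say, the measured Boolean algebra relation (together with enough supersets to preserve monotonicity) and still satisfy (i)--(iv) while destroying invariance. Relative to the definition actually in force, this step is therefore a genuine gap.

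Fortunately the gap is local and is closed by exactly the move the paper makes. From $(\ov b,\ov c)\forkindep[A]\ov e$ you already know, by monotonicity and symmetry, that $\ov b\forkindep[A]\ov e$ and $g\ov b=\ov c\forkindep[A]\ov e$; now apply stationarity directly, with $C$ the set enumerated by $\ov e$ and the pair $\ov b$, $g\ov b$, to get $g\in V_{\ov e}V_{\ov b}$ --- no transport needed. Your central observation (which is correct and uses only symmetry and stationarity) gives $s\in V_{\ov b}$ with $\ov e=s\ov e_0=sw\ov b$, so $V_{\ov e}=swV_{\ov b}w\inv s\inv$ and hence $g\in swV_{\ov b}w\inv s\inv V_{\ov b}\subseteq (FV_{\ov b})^3$ with the same $F$. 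With this substitution your argument is complete and coincides in substance with the paper's proof. The same caveat applies to your closing remark that both edges of the path $\ov b,\ov e,\ov c$ ``carry the unique $A$-independent orbital type'': identifying the orbital type of $(\ov e,\ov c)$ with that of $(\ov b,\ov e_0)$ again requires the invariance you have not established.
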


Now, model theoretical independence relations arise, in particular, in models of $\omega$-stable theories. Though stationarity of the independence relation may fail, we nevertheless arrive at the following result.

\begin{thm}
Suppose that $\bf M$ is a saturated countable model of an $\om$-stable theory. Then ${\rm Aut}(\bf M)$ has property (OB).
\end{thm}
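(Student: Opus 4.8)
The plan is to imitate the proof of the preceding theorem, using non-forking independence over $\tom$ in place of an orbital $\tom$-independence relation, and to circumvent the failure of stationarity by exploiting the finiteness of Morley degree peculiar to $\om$-stable theories. First I would record the consequences of saturation. Since $T$ is a countable $\om$-stable theory and $\bf M$ is countable and saturated, $\bf M$ is $\aleph_0$-saturated and $\aleph_0$-homogeneous; hence two finite tuples lie in the same ${\rm Aut}(\bf M)$-orbit exactly when they realise the same complete type over $\tom$, so that orbital types coincide with complete types. Writing $\forkindep$ for non-forking independence over $\tom$, stability guarantees symmetry and monotonicity, while the existence axiom together with $\aleph_0$-saturation lets me realise, for any tuples $\ov b, \ov c$, a copy $\ov e \equiv \ov c$ inside $\bf M$ with $\ov e \forkindep \ov b$.

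The key new ingredient, replacing stationarity, is a finiteness statement. In an $\om$-stable theory every complete type over $\tom$ has finite Morley degree, and hence finite multiplicity. Consequently, for a fixed type $p = {\rm tp}(\ov c)$, if $\ov c_1, \ov c_2 \models p$ and $\ov c_1 \forkindep \ov c_2$, then ${\rm tp}(\ov c_1/\ov c_2)$ is one of only finitely many non-forking extensions of $p$, so the joint type ${\rm tp}(\ov c_1, \ov c_2)$ --- equivalently the orbital type $\OO(\ov c_1, \ov c_2)$ --- ranges over a finite set $\ku S_p$ of orbital types as $\ov c_1, \ov c_2$ vary over independent realisations of $p$; invariance under automorphisms guarantees that this set is the same for all realisations and closed under swapping coordinates. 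This finite $\ku S_p$ plays the role that the single generic amalgamation type plays in the stationary case.

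With these in hand, property (OB) follows by a two-step amalgamation. Fixing an arbitrary tuple $\ov c$ and setting $p = {\rm tp}(\ov c)$, I would show that $\ov c$ is joined to every other realisation of $p$ by a path of length two whose edges have orbital types in $\ku S_p$: given $g \in {\rm Aut}(\bf M)$, use existence and $\aleph_0$-saturation to find $\ov e \models p$ inside $\bf M$ with $\ov e \forkindep (\ov c, g\ov c)$; monotonicity and symmetry then give $\ov c \forkindep \ov e$ and $g\ov c \forkindep \ov e$, so both $\OO(\ov c, \ov e)$ and $\OO(\ov e, g\ov c)$ lie in $\ku S_p$ and $\ov c, \ov e, g\ov c$ is the desired path inside the orbit $\OO(\ov c)$. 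Translating the path into the group exactly as in the proof of the earlier theorem, each admissible edge type corresponds to multiplication by $V_{\ov c}$-translates of finitely many fixed elements, so that ${\rm Aut}(\bf M) = (FV_{\ov c})^k$ for a finite set $F$ and a bounded $k$; since $V_{\ov c}$ was an arbitrary basic open subgroup, this yields property (OB).

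I expect the main obstacle to be the finiteness claim $\# \ku S_p < \infty$ and its honest deduction from $\om$-stability: one must check that passing from the global notion of multiplicity to the number of non-forking extensions of $p$ realised over a single finite parameter tuple $\ov c_2$ indeed stays finite and uniform in the realisation, and that this finiteness survives the identification of types with orbital types furnished by homogeneity. The remaining steps --- symmetry and existence of non-forking, the realisation of independent copies inside the saturated model, and the passage from the length-two path to a bound of the form $(FV_{\ov c})^k$ --- are either standard facts of stability theory or direct repetitions of the argument already carried out for orbital independence relations.
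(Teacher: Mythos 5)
Your proposal is correct and follows essentially the same route as the paper: both replace the failed stationarity axiom by the fact that a type in an $\om$-stable theory has only finitely many non-forking extensions over a finite set (bounded by its Morley degree), use saturation to realise these extensions and the auxiliary independent tuple inside $\bf M$, and then run the same two-step amalgamation (your path $\ov c,\ov e, g\ov c$ is exactly the paper's factorisation $g\in V_BFV_BFV_B$ with $\ov e=h\ov a$). The only cosmetic difference is that you phrase the final step through the orbital-type path lemmas while the paper computes directly in the group.
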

In this connection, we should mention an earlier observation by P. Cameron, namely, that automorphism groups of countable $\aleph_0$-categorical structures have property (OB). Also, it remains an open problem if there are similar results valid for countable atomic models of $\omega$-stable theories.
\begin{prob}
Suppose $\bf M$ is a countable atomic model of an $\om$-stable theory. Does ${\rm Aut}(\bf M)$ have the local property (OB)?
\end{prob}

A particular setting giving rise to orbital independence relations, which has earlier been studied by K. Tent and M. Ziegler \cite{tent}, is Fra\"iss\'e classes admitting a canonical amalgamation construction. For our purposes, we need a stronger notion than that considered in \cite{tent} and say that a Fra\"iss\'e class $\ku K$ admits a functorial amalgamation over some $\bf A\in \ku K$ if there is a map $\Theta$ that to every pair of embeddings $\eta_1\colon {\bf A}\inj {\bf B}_1$ and  $\eta_2\colon {\bf A}\inj {\bf B}_2$ with ${\bf B}_i\in \ku K$ produces an amalgamation of ${\bf B}_1$ and ${\bf B}_2$ over these embeddings so that $\Theta$ is symmetric in its arguments and commutes with embeddings (see Definition \ref{funct amal} for full details). 

\begin{thm}
Suppose $\ku K$ is a Fra\"iss\'e class with limit $\bf K$ admitting a functorial amalgamation over some  $\bf A\in \ku  K$. Then ${\rm Aut}(\bf K)$ admits a  metrically proper compatible left-invariant metric. Moreover, if $\bf A$ is generated by the empty set, then ${\rm Aut}(\bf K)$ has property (OB).
\end{thm}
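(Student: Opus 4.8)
The plan is to deduce this theorem from the combination of two earlier results: the characterization of functorial amalgamation as producing an orbital independence relation, and the theorem stating that an orbital $A$-independence relation forces $V_A$ to have property (OB) (with the sharpened conclusion when $A$ is generated by the empty set). So the first task is to verify that a functorial amalgamation over $\bf A\in\ku K$ yields, on the Fra\"iss\'e limit $\bf K$, an orbital $A$-independence relation $\forkindep[A]$ in the sense of Definition \ref{indep rel}, where $A$ is the (finite) universe of $\bf A$ sitting inside $\bf K$. Concretely, I would define, for finite tuples $\ov b, \ov c$ from $\bf K$, the relation $\ov b\forkindep[A]\ov c$ to hold when the substructure generated by $A\cup\ov b$ and the substructure generated by $A\cup\ov c$ sit inside $\bf K$ in the canonical relative position prescribed by $\Theta$ applied to the two inclusions of $\bf A$. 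The functoriality (commuting with embeddings) and symmetry of $\Theta$ are exactly what should translate into the symmetry, monotonicity, existence, and stationarity axioms demanded of an orbital independence relation.

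The key steps, in order, are as follows. First, I would spell out how $\Theta$ determines a ternary relation over $A$: given $\ov b$ and $\ov c$, amalgamate the substructures $\langle A\ov b\rangle$ and $\langle A\ov c\rangle$ over $\bf A$ using $\Theta$, and use the ultrahomogeneity of the Fra\"iss\'e limit $\bf K$ to realize this canonical amalgam inside $\bf K$; declare $\ov b\forkindep[A]\ov c$ when the actual configuration in $\bf K$ is isomorphic (over $A\ov b$ and over $A\ov c$ simultaneously) to the $\Theta$-amalgam. Second, I would check the four axioms: \emph{symmetry} is immediate from $\Theta$ being symmetric in its two arguments; \emph{monotonicity} follows because restricting to subtuples corresponds to precomposing the embeddings with inclusions, which $\Theta$ respects; \emph{existence} is the statement that any two configurations can be amalgamated, which is guaranteed because $\Theta$ produces an amalgam landing in $\ku K$ and hence embeddable in $\bf K$; and \emph{stationarity}—the delicate axiom—says that the independent amalgam is unique up to the relevant automorphisms, which is precisely the content of $\Theta$ being a well-defined function that commutes with embeddings, combined with ultrahomogeneity of $\bf K$. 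Third, having established that $\forkindep[A]$ is an orbital $A$-independence relation, I invoke the earlier theorem to conclude that $V_A$ has property (OB), hence ${\rm Aut}(\bf K)$ has the local property (OB) and therefore admits a metrically proper compatible left-invariant metric. Finally, when $\bf A$ is generated by the empty set we have $A=\tom$, so $V_A={\rm Aut}(\bf K)$ itself has property (OB).

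The hard part will be the careful verification of stationarity, and more precisely the bookkeeping that translates the categorical language of Definition \ref{funct amal} (a natural, symmetric amalgamation functor commuting with embeddings) into the combinatorial language of Definition \ref{indep rel} (a stationary independence relation on tuples). One must be vigilant that $\Theta$ is defined on \emph{abstract} pairs of embeddings of $\bf A$, whereas the independence relation lives on tuples inside the single structure $\bf K$; bridging these requires repeatedly using ultrahomogeneity to transport the canonical amalgam into $\bf K$ and to recognize when two realizations of the same amalgamation data are conjugate by an element of $V_A$. A secondary subtlety is ensuring the amalgam of finitely generated structures produced by $\Theta$ is again finitely generated (so that it embeds into $\bf K$ and the orbital types involved remain those of finite tuples), which should follow from $\ku K$ being a Fra\"iss\'e class of finitely generated structures but deserves an explicit remark.
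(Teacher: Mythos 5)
Your proposal takes essentially the same route as the paper: the paper defines $\forkindep[A]$ from $\Theta$ exactly as you describe (Definition \ref{functorial indep}), checks symmetry, monotonicity, existence and stationarity using the symmetry and functoriality of $\Theta$ together with the ultrahomogeneity of $\bf K$ (Theorem \ref{functorial amalgamation OB}), and then concludes via Theorem \ref{indep OB}. You have also correctly singled out stationarity as the only axiom requiring real work, which is precisely where the paper's proof concentrates its effort.
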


Finally, using the above mentioned results, we may compute quasi-isometry types of various groups, e.g., those having property (OB) being quasi-isometric to a point, while others have more interesting structure.
\begin{enumerate}
\item The automorphism group ${\rm Aut}(\bf T)$ of the $\aleph_0$-regular tree $\bf T$ is quasi-iso\-metric to $\bf T$.
\item The group ${\rm Aut}({\bf T},\mathfrak e)$ of automorphisms of $\bf T$ fixing an end $\mathfrak e$ is quasi-isometric to $\bf T$.
\item The isometry group ${\rm Isom}(\Q\U)$ of the rational Urysohn metric space $\Q\U$ is quasi-isometric to $\Q\U$.
\end{enumerate}

\section{Orbital types formulation}
If $G$ is a non-Archimedean Polish group admitting a maximal compatible left-invariant metric, there is a completely abstract way of identifying its quasi-isometry type. Namely, fix a symmetric open set $U\ni 1$ generating the group and having property (OB) relative to $G$. Analogous to a construction of H. Abels (Beispiel 5.2 \cite{abels}) for locally compact totally disconnected groups, we construct a vertex transitive and metrically proper  action on a countable connected graph as follows. First, pick an open subgroup $V$ contained in $U$ and let $A\subseteq G$ be a countable  set so that $VUV=AV$. Since $a\in VUV$ implies that also $a\inv \in (VUV)\inv=VUV$, we may assume that $A$ is symmetric, whereby
$AV=VUV=(VUV)\inv=V\inv A\inv=VA$ and thus also  $(VUV)^k=(AV)^k=A^kV^k=A^kV$ for all $k\geqslant 1$. In particular, we note that $A^kV$ has property (OB) relative to $G$ for all $k\geqslant 1$. 

The graph $\mathbb X$ is now defined to be the set $G/V$ of left-cosets of $V$ along with the set of edges $\big\{\{gV,gaV\}\del a\in A\;\&\; g\in G\big\}$. Note that the left-multiplication action of $G$ on $G/V$ is a vertex transitive action of $G$ by automorphisms of $\mathbb X$.
Moreover, since $G=\bigcup_k(VUV)^k=\bigcup_kA^kV$, one sees that the graph $\mathbb X$ is connected and hence the shortest path distance $\rho$ is a well-defined metric on $\mathbb X$.
    
We claim that the action $G\curvearrowright \mathbb X$ is metrically proper. Indeed, note that, if  $g_n\til \infty$ in $G$, then $(g_n)$ eventually leaves every set with property (OB) relative to $G$ and thus, in particular, leaves every $A^kV$. Since, the $k$-ball around the vertex $1V\in \mathbb X$ is contained in the set $A^kV$, one sees that $\rho(g_n\cdot 1V,1V)\til \infty$, showing that the action is metrically proper.
Therefore, by our version of the \v{S}varc--Milnor Lemma, the mapping $g\mapsto  gV$ is a quasi-isometry between  $G$ and $(\mathbb X,\rho)$.

However, this construction neither addresses the question of when $G$ admits a maximal metric nor provides a very informative manner of defining this. For those questions, we need to investigate matters further. In the following, $\bf M$ will be a fixed countable first-order structure.

\begin{lemme}\label{S til F}
Suppose $\overline a$ is a finite tuple in ${\bf M}$ and $\ku S$ is a finite family of  orbital types in ${\bf M}$.
Then there is a finite set $F\subseteq {\rm Aut}({\bf M})$ so that, whenever $\ov a_0, \ldots, \ov a_n\in \OO(\ov a)$, $\ov a_0=\ov a$ and $\ku O(\ov a_i, \ov a_{i+1})\in \ku S$ for all $i$, then $\ov a_n\in (V_{\ov a}F)^n\cdot \ov a$. 
\end{lemme}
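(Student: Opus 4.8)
The plan is to build the finite set $F$ directly out of the family $\ku S$ and then verify the conclusion by induction on the length $n$ of the path, the whole point being to ``normalise'' each step of the path so that it starts at the basepoint $\ov a$. First I would isolate which members of $\ku S$ can actually arise: since every $\ov a_i$ lies in $\OO(\ov a)$, the initial segment of length $|\ov a|$ of any representative of $\OO(\ov a_i,\ov a_{i+1})$ again lies in $\OO(\ov a)$, so the only orbital types that can occur are those $\ku O\in\ku S$ containing a pair whose first coordinate is exactly $\ov a$; call these the \emph{admissible} types. Enumerating $\ku S=\{\ku O_1,\dots,\ku O_p\}$, for each admissible $\ku O_j$ I would fix one representative $(\ov a,\ov b_j)\in\ku O_j$ together with an automorphism $f_j\in{\rm Aut}({\bf M})$ satisfying $f_j\cdot\ov a=\ov b_j$, and set $F=\{f_j : \ku O_j \text{ admissible}\}$. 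As $\ku S$ is finite, $F$ is finite, which is what the statement demands.

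The core is the inductive claim that $\ov a_k\in(V_{\ov a}F)^k\cdot\ov a$ for all $k\leqslant n$, with the case $k=0$ immediate from $\ov a_0=\ov a$. For the inductive step, suppose $h\in(V_{\ov a}F)^k$ with $h\cdot\ov a=\ov a_k$. The key manipulation is to pull the step $(\ov a_k,\ov a_{k+1})$ back by $h\inv$: since $h\inv\cdot\ov a_k=\ov a$, the pair $h\inv\cdot(\ov a_k,\ov a_{k+1})=(\ov a,h\inv\cdot\ov a_{k+1})$ lies in the very same orbital type $\OO(\ov a_k,\ov a_{k+1})\in\ku S$, say $\ku O_j$, which is therefore admissible and carries the chosen representative $(\ov a,\ov b_j)$. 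Comparing the two pairs $(\ov a,\ov b_j)$ and $(\ov a,h\inv\cdot\ov a_{k+1})$ inside $\ku O_j$ produces $v\in{\rm Aut}({\bf M})$ with $v\cdot(\ov a,\ov b_j)=(\ov a,h\inv\cdot\ov a_{k+1})$; the agreement on the first coordinate forces $v\in V_{\ov a}$, while the second coordinate gives $vf_j\cdot\ov a=h\inv\cdot\ov a_{k+1}$. Hence $hvf_j\cdot\ov a=\ov a_{k+1}$ with $hvf_j\in(V_{\ov a}F)^k\cdot V_{\ov a}F=(V_{\ov a}F)^{k+1}$, closing the induction.

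I do not anticipate a genuine obstacle; the argument is essentially a bookkeeping of orbits once one sees the right move. The single conceptual step on which everything rests is the pull-back by $h\inv$, which realigns an arbitrary step $(\ov a_k,\ov a_{k+1})$ to a step $(\ov a,h\inv\cdot\ov a_{k+1})$ based at $\ov a$ and of the same orbital type; this is exactly what lets the finitely many types in $\ku S$ be encoded by the finitely many ``generators'' $f_j$, with the open subgroup $V_{\ov a}$ absorbing the ambiguity in the choice of representative. The only routine care needed is the preliminary observation that non-admissible types never occur along such a path, so that a representative based at $\ov a$ is always available when the induction calls for one.
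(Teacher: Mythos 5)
Your proposal is correct and follows essentially the same route as the paper: the set $F$ is built by choosing, for each orbital type in $\ku S$ that admits a representative with first coordinate $\ov a$, an automorphism $f$ with $(\ov a, f\ov a)$ in that type, and the verification is the same pull-back of each step $(\ov a_i,\ov a_{i+1})$ to a pair based at $\ov a$, with $V_{\ov a}$ absorbing the ambiguity. The only cosmetic difference is that you run an explicit induction on $k$, whereas the paper factors $\ov a_n=h_1\cdots h_n\cdot\ov a$ and shows each $h_{i+1}\in V_{\ov a}FV_{\ov a}$ before collapsing $(V_{\ov a}FV_{\ov a})^n\cdot\ov a=(V_{\ov a}F)^n\cdot\ov a$.
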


\begin{proof}
For each orbital type $\OO\in \ku S$, pick if possible some $f\in {\rm Aut}({\bf M})$ so that $(\ov a, f\ov a)\in \OO$ and let $F$ be the finite set of these $f$. Now, suppose that $\ov a_0, \ldots, \ov a_n\in \OO(\ov a)$, $\ov a_0=\ov a$ and $\ku O(\ov a_i, \ov a_{i+1})\in \ku S$ for all $i$. Since the  $\ov a_i$ are orbit equivalent, we can inductively choose $h_1,\ldots, h_n\in {\rm Aut}({\bf M})$ so that $\ov a_i=h_1\cdots h_i\cdot \ov a$. Thus, for all $i$, we have $(\ov a, h_{i+1}\ov a)= (h_1\cdots h_i)\inv\cdot  (\ov a_i, \ov a_{i+1})$, whereby there is some $f\in F$ so that $(\ov a, h_{i+1}\ov a)\in \OO(\ov a, f\ov a)$. It follows that, for some $g\in {\rm Aut}({\bf M})$, we have $(g\ov a,gh_{i+1}\ov a)=(\ov a, f\ov a)$, i.e., $g\in V_{\ov a}$ and $f\inv gh_{i+1}\in V_{\ov a}$, whence also $h_{i+1}\in V_{\ov a}FV_{\ov a}$.
Therefore, $\ov a_n=h_1\cdots h_n\cdot \ov a\in (V_{\ov a}FV_{\ov a})^n\cdot \ov a=(V_{\ov a}F)^n\cdot \ov a$.
\end{proof}

\begin{lemme}\label{F til S}
Suppose $\overline a$ is a finite tuple in ${\bf M}$ and $F\subseteq {\rm Aut}({\bf M})$ is a finite set. Then there is a finite family $\ku S$ of orbital types  in ${\bf M}$ so that, for all $g\in (V_{\ov a}F)^n$, there are  $\ov a_0, \ldots, \ov a_n\in \OO(\ov a)$ with $\ov a_0=\ov a$ and $\ov a_n =g\ov a$   satisfying $\ku O(\ov a_i, \ov a_{i+1})\in \ku S$ for all $i$.
\end{lemme}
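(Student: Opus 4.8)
The plan is to read the previous lemma backwards and exhibit $\ku S$ explicitly as the finite family of orbital types induced by the single ``jumps'' coming from elements of $F$. Concretely, I would set
$$
\ku S=\big\{\OO(\ov a, f\ov a)\del f\in F\big\},
$$
which is finite because $F$ is finite. This is precisely the collection of types one expects, being the mirror image of the construction producing $F$ from $\ku S$ in Lemma~\ref{S til F}.

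Given $g\in(V_{\ov a}F)^n$, I would factor $g=v_1f_1v_2f_2\cdots v_nf_n$ with each $v_i\in V_{\ov a}$ and $f_i\in F$, introduce the partial products $g_i=v_1f_1\cdots v_if_i$ (with $g_0=1$), and define $\ov a_i=g_i\cdot\ov a$. The endpoint requirements are then immediate: $\ov a_0=\ov a$, $\ov a_n=g\ov a$, and every $\ov a_i$ lies in $\OO(\ov a)$ since it is an automorphic image of $\ov a$.

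The only computation to carry out is the identification of each edge type. Writing $\ov a_{i+1}=g_iv_{i+1}f_{i+1}\cdot\ov a$ and $\ov a_i=g_i\cdot\ov a$ gives
$$
(\ov a_i,\ov a_{i+1})=g_i\cdot(\ov a,\, v_{i+1}f_{i+1}\ov a).
$$
The key point is that $v_{i+1}\in V_{\ov a}$ fixes $\ov a$, so $(\ov a,\, v_{i+1}f_{i+1}\ov a)=v_{i+1}\cdot(\ov a,\, f_{i+1}\ov a)$; applying the automorphism $g_iv_{i+1}$ then shows $(\ov a_i,\ov a_{i+1})\in\OO(\ov a,\, f_{i+1}\ov a)$, that is, $\ku O(\ov a_i,\ov a_{i+1})\in\ku S$, exactly as needed.

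I expect no genuine obstacle here, as the statement is the straightforward converse of Lemma~\ref{S til F}. The entire argument rests on the single observation that the stabiliser factors $v_i$ may be absorbed without altering the orbital types of consecutive pairs, precisely because they fix $\ov a$. The one place demanding a little care is the bookkeeping of the partial products, together with confirming that the finitely many types $\OO(\ov a, f\ov a)$ exhaust every edge encountered along the path, uniformly in $n$.
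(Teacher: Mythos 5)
Your proposal is correct and follows essentially the same route as the paper: the same choice $\ku S=\{\OO(\ov a,f\ov a)\del f\in F\}$, the same factorisation of $g$ into partial products defining the $\ov a_i$, and the same absorption of the stabiliser factor into the acting automorphism to identify each edge type as $\OO(\ov a, f_{i+1}\ov a)$. No issues.
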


\begin{proof}
We let $\ku S$ be the collection of $\ku O(\ov a, f\ov a)$ with $f\in F$. Now suppose that $g\in(V_{\ov a}F)^n$ and write $g=h_1f_1\cdots h_nf_n$ for $h_i\in V_{\ov a}$ and $f_i\in F$. Setting $\ov a_i=h_1f_1\cdots h_if_i\cdot  \ov a$, we see that 
$$
(\ov a_i, \ov a_{i+1})=h_1f_1\cdots h_if_i h_{i+1}\cdot (h_{i+1}\inv \ov a, f_{i+1}\ov a)=h_1f_1\cdots h_if_i h_{i+1}\cdot (\ov a, f_{i+1}\ov a),
$$
i.e., $\ku O(\ov a_i, \ov a_{i+1})\in \ku S$ as required.
\end{proof}

\begin{lemme}\label{condition for OB}
Suppose $\overline a$ is a finite tuple in ${\bf M}$. Then the following are equivalent.
\begin{enumerate}
\item The pointwise stabiliser $V_{\ov a}$ has property (OB) relative to ${\rm Aut}({\bf M})$.
\item For every tuple $\ov b$ in ${\bf M}$, there is a finite family $\ku S$ of orbital types and an $n\geqslant 1$ so that whenever $(\ov a, \ov c)\in \OO(\ov a, \ov b)$ there is a path $\ov d_0, \ldots, \ov d_n\in  \OO(\ov a, \ov b)$ with $\ov d_0=(\ov a, \ov b)$, $\ov d_n=(\ov a, \ov c)$ and $\ku O(\ov d_i,\ov d_{i+1})\in \ku S$ for all $i$. 
\end{enumerate}
\end{lemme}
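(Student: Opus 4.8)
The plan is to read both conditions through Lemmas \ref{S til F} and \ref{F til S}, but applied to the \emph{extended} tuple $(\ov a,\ov b)$ rather than to $\ov a$ itself. The conceptual bridge is the elementary observation that a tuple $\ov c$ satisfies $(\ov a,\ov c)\in\OO(\ov a,\ov b)$ precisely when $\ov c=g\ov b$ for some $g\in V_{\ov a}$: indeed, any $g$ with $g\cdot(\ov a,\ov b)=(\ov a,\ov c)$ must fix $\ov a$ and hence lie in $V_{\ov a}$. Thus the paths inside $\OO(\ov a,\ov b)$ featuring in condition (2) are exactly a combinatorial encoding of how elements of $V_{\ov a}$ move $\ov b$, i.e.\ of the coset structure of $V_{\ov a}$ relative to the smaller open subgroup $V_{(\ov a,\ov b)}$. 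I will also use the recorded characterisation of the relative property (OB): since the open subgroups $V_{(\ov a,\ov b)}$ (as $\ov b$ varies) form a neighbourhood basis at $1$, assertion (1) is equivalent to the statement that for every $\ov b$ there are a finite set $F\subseteq{\rm Aut}({\bf M})$ and a $k\geqslant 1$ with $V_{\ov a}\subseteq (FV_{(\ov a,\ov b)})^k$, and it suffices to verify such a containment for each basic neighbourhood $V_{\ov b}$.

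For the implication (1)$\Rightarrow$(2), fix $\ov b$ and use (1) to obtain finite $F$ and $k$ with $V_{\ov a}\subseteq (FV_{(\ov a,\ov b)})^k$. Enlarging $F$ to be symmetric while keeping it finite and then taking inverses — using that $V_{(\ov a,\ov b)}$ is a subgroup and $V_{\ov a}=V_{\ov a}\inv$ — rewrites this as $V_{\ov a}\subseteq (V_{(\ov a,\ov b)}F)^k$. Now apply Lemma \ref{F til S} to the tuple $(\ov a,\ov b)$ and the set $F$ to produce a finite family $\ku S$ of orbital types. Given any $\ov c$ with $(\ov a,\ov c)\in\OO(\ov a,\ov b)$, pick $g\in V_{\ov a}$ with $g\cdot(\ov a,\ov b)=(\ov a,\ov c)$; then $g\in (V_{(\ov a,\ov b)}F)^k$, so Lemma \ref{F til S} yields a path $\ov d_0,\ldots,\ov d_k$ in $\OO(\ov a,\ov b)$ from $(\ov a,\ov b)$ to $(\ov a,\ov c)$ with all consecutive orbital types in $\ku S$. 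This is precisely (2) with $n=k$.

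For (2)$\Rightarrow$(1), fix $\ov b$, take $\ku S$ and $n$ as in (2), and apply Lemma \ref{S til F} to the tuple $(\ov a,\ov b)$ and the family $\ku S$ to obtain a finite set $F$, which we enlarge so that $1\in F$. For any $g\in V_{\ov a}$ the tuple $(\ov a,g\ov b)=g\cdot(\ov a,\ov b)$ lies in $\OO(\ov a,\ov b)$, so (2) furnishes a path from $(\ov a,\ov b)$ to $(\ov a,g\ov b)$ with edges in $\ku S$; Lemma \ref{S til F} then gives $(\ov a,g\ov b)\in (V_{(\ov a,\ov b)}F)^n\cdot(\ov a,\ov b)$, whence $g\in (V_{(\ov a,\ov b)}F)^nV_{(\ov a,\ov b)}$. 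Therefore $V_{\ov a}\subseteq (V_{(\ov a,\ov b)}F)^nV_{(\ov a,\ov b)}$, and the bookkeeping inclusion $(V_{(\ov a,\ov b)}F)^nV_{(\ov a,\ov b)}\subseteq (FV_{(\ov a,\ov b)})^{n+1}$ — valid because $1\in F$ and $V_{(\ov a,\ov b)}$ is a group — gives $V_{\ov a}\subseteq (FV_{(\ov a,\ov b)})^{n+1}\subseteq (FV_{\ov b})^{n+1}$. As $\ov b$ was arbitrary, the relative property (OB) criterion holds on the basic neighbourhoods $V_{\ov b}$, which is (1).

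The only real content is the correspondence described in the first paragraph; once it is in place, each direction is a matter of feeding the right data into Lemmas \ref{S til F} and \ref{F til S}. The step most likely to cause friction is the purely group-theoretic normalisation — symmetrising $F$, passing between $(FV)^k$, $(VF)^k$ and $(VF)^nV$, and reducing the property (OB) criterion to the basic subgroups — which is routine but must be carried out carefully so as to keep the finer subgroup $V_{(\ov a,\ov b)}$, rather than the coarser $V_{\ov b}$, on the correct side of each product.
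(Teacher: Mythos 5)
Your proposal is correct and follows essentially the same route as the paper: both directions are obtained by feeding the extended tuple $(\ov a,\ov b)$ into Lemmas \ref{F til S} and \ref{S til F} respectively, via the observation that $(\ov a,\ov c)\in\OO(\ov a,\ov b)$ exactly when $\ov c=g\ov b$ for some $g\in V_{\ov a}$. The only difference is that you spell out the group-theoretic normalisations (symmetrising $F$, passing between $(FV)^k$, $(VF)^k$ and $(VF)^nV$) that the paper treats as implicit in the definition of the relative property (OB).
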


\begin{proof}
(1)$\saa$(2): Suppose that $V_{\ov a}$ has property (OB) relative to ${\rm Aut}({\bf M})$ and that $\ov b$ is a tuple in ${\bf M}$. This means that, for every neigbourhood $U\ni1$, there is a finite set $F\subseteq {\rm Aut}(\bf M)$ and an $n\geqslant 1$ so that $V_{\ov a}\subseteq (UF)^n$. In particular, this holds for $U=V_{(\ov a, \ov b)}$. Let now $\ku S$ be the finite family of orbital types associated to $F$  and the tuple $(\ov a, \ov b)$ as given in Lemma \ref{F til S}. Assume also that $(\ov a, \ov c)\in \OO(\ov a, \ov b)$, i.e., that $\ov c=g\cdot \ov b$ for some $g\in V_{\ov a}\subseteq (V_{(\ov a,\ov b)}F)^n$. By assumption on $\ku S$, there is a path $\ov d_0, \ldots, \ov d_n\in  \OO(\ov a, \ov b)$ with  and $\ku O(\ov d_i,\ov d_{i+1})\in \ku S$ for all $i$ satisfying $\ov d_0=(\ov a, \ov b)$, $\ov d_n=(\ov a, \ov c)$ as required.

(2)$\saa$(1): Assume that (2) holds. To see that $V_{\ov a}$ has property (OB) relative to ${\rm Aut}(\bf M)$, it is enough to verify that, for all tuples $\ov b$ in $\bf M$, there is a finite set $F\subseteq {\rm Aut}(\bf M)$ and an $n\geqslant 1$ so that $V_{\ov a}\subseteq (V_{(\ov a, \ov b)}FV_{(\ov a, \ov b)})^n$. But given $\ov b$, let $\ku S$ be as in (2) and pick a finite set $F\subseteq {\rm Aut}(\bf M)$ associated to $\ku S$ and the tuple $(\ov a, \ov b)$ as provided by Lemma \ref{S til F}. 

Now, if $g\in V_{\ov a}$, then $(\ov a, g\ov b)=g\cdot (\ov a, \ov b)\in \OO(\ov a, \ov b)$. So by (2) there is a path $\ov d_0, \ldots, \ov d_n\in  \OO(\ov a, \ov b)$ with $\ov d_0=(\ov a, \ov b)$, $\ov d_n=(\ov a, g\ov b)$ and $\ku O(\ov d_i,\ov d_{i+1})\in \ku S$ for all $i$. Again, by the choice of $F$, we have $g\cdot (\ov a,\ov b)=\ov d_n\in (V_{(\ov a, \ov b)}F)^n\cdot (\ov a, \ov b)$, whence $g\in (V_{(\ov a, \ov b)}F)^n\cdot V_{(\ov a, \ov b)}= (V_{(\ov a, \ov b)}FV_{(\ov a, \ov b)})^n$ as required.
\end{proof}

Using that the $V_{\ov a}$ form a neighbourhood basis at the identity in ${\rm Aut}(\bf M)$, we obtain the following criterion for the local property (OB) and hence the existence of a metrically proper metric.

\begin{thm}
The following are equivalent for the automorphism group ${\rm Aut}(\bf M)$ of a countable structure $\bf M$. 
\begin{enumerate}
\item There is a metrically proper compatible left-invariant metric on ${\rm Aut}(\bf M)$,
\item ${\rm Aut}(\bf M)$ has the local property (OB),
\item  there is a tuple $\ov a$ so that, for every $\ov b$, there is a finite family $\ku S$ of orbital types and an $n\geqslant 1$ such that whenever $(\ov a, \ov c)\in \OO(\ov a, \ov b)$ there is a path $\ov d_0, \ldots, \ov d_n\in  \OO(\ov a, \ov b)$ with $\ov d_0=(\ov a, \ov b)$, $\ov d_n=(\ov a, \ov c)$ and $\ku O(\ov d_i,\ov d_{i+1})\in \ku S$ for all $i$. 
\end{enumerate}
\end{thm}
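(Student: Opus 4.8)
The plan is to treat the three conditions in two blocks. The equivalence (1)$\equi$(2) is not proved here at all: it is exactly the abstract characterisation recalled in the introduction, that a separable metrisable group admits a compatible metrically proper left-invariant metric if and only if it has the local property (OB). Since ${\rm Aut}(\bf M)$ is Polish, hence separable metrisable, this applies verbatim. So all the genuine content lies in (2)$\equi$(3), which I would deduce from Lemma \ref{condition for OB} by bridging its \emph{per-tuple} statement to the \emph{existential} statement in (3). The two structural facts I need about ${\rm Aut}(\bf M)$ are: the pointwise stabilisers $V_{\ov a}$ form a neighbourhood basis at $1$, and property (OB) relative to $G$ is inherited by subsets.

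For (3)$\saa$(2), suppose $\ov a$ witnesses condition (3). This is word-for-word condition (2) of Lemma \ref{condition for OB} applied to this particular $\ov a$, so by the implication (2)$\saa$(1) of that lemma, $V_{\ov a}$ has property (OB) relative to ${\rm Aut}(\bf M)$. Since $V_{\ov a}$ is an \emph{open} neighbourhood of the identity, it directly witnesses the local property (OB), giving (2). For (2)$\saa$(3), the local property (OB) furnishes some neighbourhood $U\ni 1$ with property (OB) relative to $G$. As the $V_{\ov a}$ form a neighbourhood basis at $1$, I can choose a tuple $\ov a$ with $V_{\ov a}\subseteq U$. Because relative property (OB) passes to subsets, $V_{\ov a}$ itself has property (OB) relative to $G$; the implication (1)$\saa$(2) of Lemma \ref{condition for OB} then yields condition (3) for exactly this $\ov a$.

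The only step that is not pure bookkeeping is the subset-inheritance of relative property (OB), and I expect this to be the one point worth a sentence of justification rather than the main \emph{difficulty}: it is immediate from the definition, since a set of finite diameter in every compatible left-invariant metric has all of its subsets of finite diameter as well (equivalently, from the combinatorial reformulation $A\subseteq (FV)^k$ recalled in the introduction, which is plainly monotone in $A$). Everything else is a direct transcription of Lemma \ref{condition for OB} together with the non-Archimedean feature that $\{V_{\ov a}\}$ is a neighbourhood basis, so no further estimates or new constructions are required.
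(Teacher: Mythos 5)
Your argument is correct and is exactly the route the paper takes: the theorem is stated immediately after Lemma \ref{condition for OB} with the one-line justification that the $V_{\ov a}$ form a neighbourhood basis at the identity, which is precisely your bridging step (together with the subset-monotonicity of relative property (OB) and the quoted equivalence of the local property (OB) with the existence of a metrically proper metric). Nothing is missing; you have merely written out the bookkeeping the paper leaves implicit.
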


Our goal is to be able to compute the actual quasi-isometry type of various automorphism groups and, for this, the following graph turns out to be of central importance.

\begin{defi}
Suppose $\ov a$ is a tuple in ${\bf M}$ and $\ku S$ is a finite family of orbital types in $\bf M$. We let $\mathbb X_{\ov a, \ku S}$ denote the graph whose vertex set is the orbital type $\OO(\ov a)$ and whose edge relation  is given by 
$$
(\ov b, \ov c)\in {\rm Edge}\; \mathbb X_{\ov a, \ku S} \;\equi \; \ov b\neq \ov c\;\&\;\Big(\ku O(\ov b, \ov c)\in \ku S \;\text{ or }\;\ku O(\ov c, \ov b)\in \ku S\Big).
$$
Let also $\rho_{\ov a, \ku S}$ denote the corresponding shortest path metric on $\mathbb X_{\ov a, \ku S}$, where we stipulate that $\rho_{\ov a, \ku S}(\ov b, \ov c)=\infty$ whenever $\ov b$ and $\ov c$ belong to distinct connected components.
\end{defi}

We remark that, as the vertex set of $\mathbb X_{\ov a, \ku S}$ is just the orbital type of $\ov a$, the automorphism group ${\rm Aut}(\bf M)$ acts transitively on the vertices of $\mathbb X_{\ov a, \ku S}$. Moreover,  the edge relation is clearly invariant, meaning that ${\rm Aut}(\bf M)$ acts vertex transitively by automorphisms on $\mathbb X_{\ov a, \ku S}$. In particular, ${\rm Aut}(\bf M)$ preserves $\rho_{\ov a, \ku S}$.

Note that $\rho_{\ov a, \ku S}$ is actual metric exactly when $\mathbb X_{\ov a, \ku S}$ is a connected graph. Our next task is to decide when this  happens.

\begin{lemme}
Suppose $\ov a$ is a tuple in ${\bf M}$. Then the following are equivalent.
\begin{enumerate}
\item ${\rm Aut}({\bf M})$ is finitely generated over $V_{\ov a}$,
\item  there is a finite family $\ku S$  of orbital types so that $\mathbb X_{\ov a, \ku S}$ is connected. 
\end{enumerate}
\end{lemme}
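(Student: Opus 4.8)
The plan is to read off ``finitely generated over $V_{\ov a}$'' as the existence of a finite set $F\subseteq {\rm Aut}(\bf M)$ with ${\rm Aut}({\bf M})=\langle V_{\ov a}\cup F\rangle$; taking $F$ symmetric, this is $\bigcup_n (V_{\ov a}F)^nV_{\ov a}$ since $V_{\ov a}$ is a subgroup. I would then use Lemmas \ref{S til F} and \ref{F til S} as dictionaries translating between such an $F$ and a finite family $\ku S$ of orbital types. The key observation tying the two conditions together is that a path $\ov a=\ov d_0,\ldots,\ov d_m=\ov b$ in $\mathbb X_{\ov a,\ku S}$ is exactly a sequence in $\OO(\ov a)$ whose consecutive ordered $2$-types lie in $\ku S$ up to reversal, so connectedness of $\mathbb X_{\ov a,\ku S}$ says precisely that every vertex $g\ov a$ is joined to $\ov a$ by such a path.

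For (1)$\Rightarrow$(2) I would assume $F$ as above and let $\ku S=\{\ku O(\ov a,f\ov a):f\in F\}$ be the family furnished by Lemma \ref{F til S}, noting that this single $\ku S$ serves all path lengths $n$ at once. Given an arbitrary vertex $\ov b=g\ov a\in\OO(\ov a)$, we have $g\in\langle V_{\ov a}\cup F\rangle$, so $g\ov a=h\ov a$ for some $h\in(V_{\ov a}F)^n$; Lemma \ref{F til S} then produces $\ov a_0=\ov a,\ldots,\ov a_n=h\ov a=\ov b$ with $\ku O(\ov a_i,\ov a_{i+1})\in\ku S$, i.e.\ a path from $\ov a$ to $\ov b$ in $\mathbb X_{\ov a,\ku S}$. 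As $\ov b$ was arbitrary, $\mathbb X_{\ov a,\ku S}$ is connected.

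For (2)$\Rightarrow$(1) I would suppose $\mathbb X_{\ov a,\ku S}$ is connected. Because the edge relation symmetrises $\ku S$, I first pass to $\ku S'=\ku S\cup\{\ku O(\ov c,\ov b):\ku O(\ov b,\ov c)\in\ku S\}$, so that every edge $(\ov d_i,\ov d_{i+1})$ satisfies $\ku O(\ov d_i,\ov d_{i+1})\in\ku S'$ outright. Feeding $\ku S'$ into Lemma \ref{S til F} yields a finite set $F$ such that any $\ku S'$-path of length $n$ starting at $\ov a$ terminates inside $(V_{\ov a}F)^n\cdot\ov a$. Now for any $g\in{\rm Aut}(\bf M)$, connectedness gives a path $\ov a=\ov d_0,\ldots,\ov d_m=g\ov a$, whence $g\ov a\in(V_{\ov a}F)^m\cdot\ov a$; so $h\ov a=g\ov a$ for some $h\in(V_{\ov a}F)^m$, giving $g\in hV_{\ov a}\subseteq\langle V_{\ov a}\cup F\rangle$. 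Thus $F$ generates ${\rm Aut}(\bf M)$ over $V_{\ov a}$.

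The only genuine wrinkles I anticipate are bookkeeping ones: matching the symmetric edge relation of $\mathbb X_{\ov a,\ku S}$ against the ordered $2$-types in the two lemmas, which I handle by passing to $\ku S'$, and observing that the path length $m$ varies with $g$. The latter is harmless, since finite generation over $V_{\ov a}$ only asks that each $g$ lie in \emph{some} power $(V_{\ov a}F)^mV_{\ov a}$, not a uniformly bounded one; indeed no uniform diameter bound on $\mathbb X_{\ov a,\ku S}$ is available here, that finer information being exactly what conditions (1) and (2) of Theorem \ref{quasiisometry intro} are designed to capture.
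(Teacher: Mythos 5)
Your proof is correct and follows essentially the same route as the paper: both directions are handled by feeding Lemma \ref{F til S} (for (1)$\Rightarrow$(2)) and Lemma \ref{S til F} applied to the symmetrised family (for (2)$\Rightarrow$(1)) into the translation between $\langle V_{\ov a}\cup F\rangle=\bigcup_n(V_{\ov a}F)^nV_{\ov a}$ and paths in $\mathbb X_{\ov a,\ku S}$. Your normalisation of $F$ (symmetric) versus the paper's (containing $1$) and your explicit remark that the path length $m$ may depend on $g$ are only cosmetic differences.
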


\begin{proof}
(1)$\saa$(2): Suppose that ${\rm Aut}({\bf M})$ is finitely generated over $V_{\ov a}$ and pick a finite set $F\subseteq {\rm Aut}(\bf M)$ containing $1$ so that ${\rm Aut}({\bf M})=\langle V_{\ov a}\cup F\rangle$. Let also $\ku S$ be the finite family of orbital types associated to $F$ and $\ov a$ as given by Lemma \ref{F til S}. To see that $\mathbb X_{\ov a, \ku S}$ is connected, let $\ov b\in \OO(\ov a)$ be any vertex and write $\ov b=g\ov a$ for some $g\in {\rm Aut}(\bf M)$. Find also $n\geqslant 1$ so that $g\in (V_{\ov a}F)^n$. By the choice of $\ku S$, it follows that there are $\ov c_0, \ldots, \ov c_n\in \OO(\ov a)$ with $\ov c_0=\ov a$, $\ov c_n=g\ov a$ and $\ku O(\ov c_i,\ov c_{i+1})\in \ku S$ for all $i$. Thus, $\ov c_0, \ldots, \ov c_n$ is a path from $\ov a$ to $\ov b$ in $\mathbb X_{\ov a, \ku S}$. Since every vertex is connected to $\ov a$, $\mathbb X_{\ov a, \ku S}$ is a connected graph.

(2)$\saa$(1): Assume $\ku S$ is a finite family  of orbital types so that $\mathbb X_{\ov a, \ku S}$ is connected. We let $\ku T$ consist of all orbital types $\OO(\ov b, \ov c)$ so that either $\OO(\ov b, \ov c)\in \ku S$ or $\OO( \ov c, \ov b)\in \ku S$ and note that $\ku T$ is also finite. Let also $F\subseteq {\rm Aut}(\bf M)$ be the finite set associated to $\ov a$ and $\ku T$ as given by Lemma \ref{F til S}. Then, if $g\in {\rm Aut}(\bf M)$, there is a path $\ov c_0, \ldots, \ov c_n$ in $\mathbb X_{\ov a, \ku S}$ from $\ov c_0=\ov a$ to $\ov c_n=g\ov a$, whence $\ku O(\ov c_i, \ov c_{i+1})\in \ku T$ for all $i$. By the choice of $F$, it follows that $g\ov a=\ov c_n\in (V_{\ov a}F)^n\cdot \ov a$ and hence that $g\in (V_{\ov a}F)^n\cdot V_{\ov a}$. Thus,  ${\rm Aut}({\bf M})=\langle V_{\ov a}\cup F\rangle$.
\end{proof}

\begin{lemme}\label{metrically proper model}
Suppose $\ov a$ is a tuple in ${\bf M}$ so that the pointwise stabiliser $V_{\ov a}$ has property (OB) relative to ${\rm Aut}({\bf M})$ and assume that $\ku S$ is a finite family of orbital types. Then, for all natural numbers $n$, the set 
$$
\{g\in {\rm Aut}({\bf M})\del \rho_{\ov a, \ku S}(\ov a, g\ov a)\leqslant n\}
$$
has property (OB) relative to ${\rm Aut}({\bf M})$.

In particular, if the graph $\mathbb X_{\ov a, \ku S}$ is connected, then the continuous isometric action 
$$
{\rm Aut}({\bf M})\curvearrowright \mathbb (\mathbb X_{\ov a, \ku S}, \rho_{\ov a, \ku S})
$$
is metrically proper.
\end{lemme}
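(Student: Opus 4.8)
The plan is to convert each metric ball $\{g\del \rho_{\ov a,\ku S}(\ov a,g\ov a)\leqslant n\}$ into a purely algebraic expression of the form $(V_{\ov a}F)^mV_{\ov a}$ and then invoke the standard closure properties of the relative property (OB). The bridge between the combinatorics of the graph $\mathbb X_{\ov a,\ku S}$ and the group ${\rm Aut}(\bf M)$ is Lemma \ref{S til F}, so the first task is to put ourselves in a position to apply it.

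The subtlety is that an edge $(\ov b,\ov c)$ of $\mathbb X_{\ov a,\ku S}$ arises when \emph{either} $\ku O(\ov b,\ov c)\in\ku S$ \emph{or} $\ku O(\ov c,\ov b)\in\ku S$, whereas Lemma \ref{S til F} only controls paths whose consecutive orbital types $\ku O(\ov a_i,\ov a_{i+1})$ lie in one fixed family. I would therefore first symmetrise $\ku S$: let $\ku T$ be the finite family consisting of $\ku S$ together with the reversal $\ku O(\ov c,\ov b)$ of each $\ku O(\ov b,\ov c)\in\ku S$ (well defined, since swapping the two halves of a tuple commutes with the diagonal action, and finite since $\ku S$ is). Then every edge $(\ov b,\ov c)$ of $\mathbb X_{\ov a,\ku S}$ satisfies $\ku O(\ov b,\ov c)\in\ku T$. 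Applying Lemma \ref{S til F} to $\ov a$ and $\ku T$ yields a single finite set $F\subseteq{\rm Aut}(\bf M)$ such that any path $\ov d_0=\ov a,\ldots,\ov d_m$ in $\mathbb X_{\ov a,\ku S}$ of length $m$ has $\ov d_m\in(V_{\ov a}F)^m\cdot\ov a$. Hence, if $\rho_{\ov a,\ku S}(\ov a,g\ov a)=m\leqslant n$, realising this distance by a shortest path from $\ov a$ to $g\ov a$ gives $g\ov a\in(V_{\ov a}F)^m\cdot\ov a$, i.e. $g\in(V_{\ov a}F)^mV_{\ov a}$. Taking the union over $m\leqslant n$,
$$
\{g\del \rho_{\ov a,\ku S}(\ov a,g\ov a)\leqslant n\}\subseteq\bigcup_{m=0}^{n}(V_{\ov a}F)^mV_{\ov a}.
$$

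It then remains to see that the right-hand side has property (OB) relative to ${\rm Aut}(\bf M)$. Here I would use the characterisation recalled in the introduction: a set $A$ has the relative property (OB) iff for every neighbourhood $V\ni1$ there are a finite $F'$ and a $k\geqslant1$ with $A\subseteq(F'V)^k$. From this, the class of relatively (OB) sets is visibly closed under finite unions, and also under products, since $A\subseteq(F_1V)^{k_1}$ and $B\subseteq(F_2V)^{k_2}$ give $AB\subseteq\big((F_1\cup F_2)V\big)^{k_1+k_2}$. As $V_{\ov a}$ has property (OB) by hypothesis, $F$ is finite (hence (OB)), and property (OB) passes to subsets, each $(V_{\ov a}F)^mV_{\ov a}$ and therefore the finite union above is relatively (OB); so is the ball. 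The one point to get right is precisely this translation step — in particular the passage to the symmetrised family $\ku T$ and the use of the $(F'V)^k$ description rather than a naive diameter estimate, which does not behave well under products.

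For the final assertion, connectedness of $\mathbb X_{\ov a,\ku S}$ makes $\rho_{\ov a,\ku S}$ a genuine (finite-valued) metric, and we have already observed that ${\rm Aut}(\bf M)$ acts vertex transitively by automorphisms, hence by isometries; continuity is automatic because each vertex stabiliser, being a conjugate of the open subgroup $V_{\ov a}$, is open, so the orbit maps are locally constant. To verify metric properness I must check that $\{g\del\rho_{\ov a,\ku S}(gx,x)\leqslant K\}$ is relatively (OB) for \emph{every} vertex $x$ and every $K$. Writing $x=h\ov a$ and using invariance of $\rho_{\ov a,\ku S}$ under left translation by $h\inv$, this set equals $hB_Kh\inv$, where $B_K=\{g\del\rho_{\ov a,\ku S}(\ov a,g\ov a)\leqslant K\}$ is exactly the ball handled above; as a product of the relatively (OB) set $B_K$ with the singletons $\{h\}$ and $\{h\inv\}$, it is again relatively (OB), which completes the proof.
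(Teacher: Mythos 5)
Your proof is correct and follows essentially the same route as the paper: apply Lemma \ref{S til F} to trap the ball of radius $n$ inside $\bigcup_{m\leqslant n}(V_{\ov a}F)^mV_{\ov a}=\bigcup_{m\leqslant n}(V_{\ov a}FV_{\ov a})^m$ and conclude by the closure properties of the relative property (OB). Your explicit symmetrisation of $\ku S$ before invoking Lemma \ref{S til F} addresses a point the paper's own proof of this lemma glosses over (though the paper performs the identical symmetrisation in its connectedness lemma), and your verification of properness at arbitrary basepoints via conjugation is likewise just a more careful rendering of the same argument.
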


\begin{proof}
Let $F\subseteq {\rm Aut}(\bf M)$ be the finite set associated to $\ov a$ and $\ku S$ as given by Lemma \ref{S til F}. Then, if $g\in {\rm Aut}(\bf M)$ is such that $\rho_{\ov a, \ku S}(\ov a, g\ov a)=m\leqslant n$, there is a path $\ov c_0, \ldots, \ov c_m$ in $\mathbb X_{\ov a, \ku S}$ with $\ov c_0=\ov a$ and $\ov c_m=g\ov a$. Thus, by the choice of $F$, we have that $g\ov a=\ov c_m\in (V_{\ov a}F)^m\cdot \ov a$, i.e., $g\in (V_{\ov a}F)^m\cdot V_{\ov a}=(V_{\ov a}FV_{\ov a})^m$. In other words,
$$
\{g\in {\rm Aut}({\bf M})\del \rho_{\ov a, \ku S}(\ov a, g\ov a)\leqslant n\}\subseteq \bigcup_{m\leqslant n}(V_{\ov a}FV_{\ov a})^m
$$
and the latter set had property (OB) relative to ${\rm Aut}(\bf M)$.
\end{proof}

With these preliminary results at hand, we can now give a full characterisation of when an automorphism group ${\rm Aut}(\bf M)$ carries a well-defined large scale geometry and, moreover, provide a direct compution of this.

\begin{thm}\label{quasiisometry}
Let ${\bf M}$ be a countable structure. Then  ${\rm Aut}({\bf M})$ admits a maximal compatible left-invariant metric if and only if there  is a tuple $\ov a$ in ${\bf M}$ satisfying the following two requirements
\begin{enumerate}
\item for every $\ov b$, there is a finite family $\ku S$ of orbital types and an $n\geqslant 1$ such that whenever $(\ov a, \ov c)\in \OO(\ov a, \ov b)$ there is a path $\ov d_0, \ldots, \ov d_n\in  \OO(\ov a, \ov b)$ with $\ov d_0=(\ov a, \ov b)$, $\ov d_n=(\ov a, \ov c)$ and $\ku O(\ov d_i,\ov d_{i+1})\in \ku S$ for all $i$,
\item there is a finite family $\ku R$ of orbital types so that, for all $\ov b\in \OO(\ov a)$, there is a finite path $\ov d_0, \ldots, \ov d_m\in \OO(\ov a)$ with $\ov d_0=\ov a$, $\ov d_m=\ov b$ and 
$\ku O(\ov d_i, \ov d_{i+1})\in  \ku R$ for all $i$. 
\end{enumerate}
Moreover, if $\ov a$ and $\ku R$ are as in (2), then the mapping 
$$
g\in {\rm Aut}({\bf M})\mapsto g\cdot \ov a \in \mathbb X_{\ov a, \ku R}
$$
is a quasi-isometry between $ {\rm Aut}({\bf M})$ and $(\mathbb X_{\ov a, \ku R}, \rho_{\ov a, \ku R})$.
\end{thm}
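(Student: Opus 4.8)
The plan is to prove Theorem \ref{quasiisometry} by combining the characterisation of maximal metrics recalled in the introduction with the auxiliary lemmas already established. Recall that $\mathrm{Aut}(\mathbf M)$ admits a maximal compatible left-invariant metric if and only if it is generated by an open set with property (OB) relative to $\mathrm{Aut}(\mathbf M)$. Since the pointwise stabilisers $V_{\ov a}$ form a neighbourhood basis of open subgroups at the identity, having such a generating open set is equivalent to the existence of a tuple $\ov a$ such that $V_{\ov a}$ has property (OB) relative to $\mathrm{Aut}(\mathbf M)$ \emph{and} $\mathrm{Aut}(\mathbf M)$ is finitely generated over $V_{\ov a}$: indeed, if $V_{\ov a}$ has property (OB) and $F$ is a finite generating set modulo $V_{\ov a}$, then $U=V_{\ov a}F V_{\ov a}$ is an open generating set with property (OB); conversely, any open generating set with property (OB) contains some $V_{\ov a}$, which then inherits the relative property (OB), and the generating set is covered by finitely many cosets of $V_{\ov a}$ together with a finite set, yielding finite generation over $V_{\ov a}$.

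First I would identify the two displayed conditions with these two group-theoretic requirements. Condition (1) is exactly condition (2) of Lemma \ref{condition for OB}, so it holds for $\ov a$ precisely when $V_{\ov a}$ has property (OB) relative to $\mathrm{Aut}(\mathbf M)$. Condition (2) is, by the preceding lemma characterising connectedness of $\mathbb X_{\ov a,\ku S}$, equivalent to $\mathrm{Aut}(\mathbf M)$ being finitely generated over $V_{\ov a}$, with the witnessing family $\ku R$ being the one for which $\mathbb X_{\ov a,\ku R}$ is connected. Thus the conjunction of (1) and (2) for some $\ov a$ is equivalent to the existence of a tuple whose stabiliser has relative property (OB) and over which the group is finitely generated, which by the paragraph above is equivalent to admitting a maximal metric. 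This establishes the biconditional.

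For the moreover clause, suppose $\ov a$ and $\ku R$ are as in (2), so that $\mathbb X_{\ov a,\ku R}$ is connected and (since (1) also holds) $V_{\ov a}$ has property (OB) relative to $\mathrm{Aut}(\mathbf M)$. The plan is to invoke the \v Svarc--Milnor lemma stated in the introduction for the action $\mathrm{Aut}(\mathbf M)\curvearrowright(\mathbb X_{\ov a,\ku R},\rho_{\ov a,\ku R})$. I must check its four hypotheses. Transitivity on vertices and continuity of the isometric action were recorded in the remark following the definition of $\mathbb X_{\ov a,\ku S}$. Metric properness of the action is precisely the content of Lemma \ref{metrically proper model}, which applies because $V_{\ov a}$ has relative property (OB) and $\mathbb X_{\ov a,\ku R}$ is connected. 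The remaining hypothesis is that $(\mathbb X_{\ov a,\ku R},\rho_{\ov a,\ku R})$ be large scale geodesic; but any connected graph equipped with its shortest path metric is genuinely geodesic at the level of integer distances, with the path constant $K=1$, so this is immediate. Having verified all hypotheses, the \v Svarc--Milnor lemma yields that $\mathrm{Aut}(\mathbf M)$ carries a maximal metric and that the orbit map $g\mapsto g\cdot\ov a\in\mathbb X_{\ov a,\ku R}$ is a quasi-isometry, as claimed.

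I expect the main subtlety to be the careful bookkeeping in the first direction: translating between the three a priori distinct formulations — the abstract criterion ``generated by an open set with property (OB)'', the two conditions in the theorem, and the two lemma-level statements about $V_{\ov a}$ — and in particular verifying that a \emph{single} tuple $\ov a$ can serve simultaneously in (1) and (2). For this last point one uses that stabilisers are nested: passing from $\ov a$ to a longer tuple $\ov a'=(\ov a,\ov b)$ shrinks $V_{\ov a}$ to $V_{\ov a'}\subseteq V_{\ov a}$, and property (OB) relative to the group is inherited by closed subgroups, while finite generation over the smaller subgroup is only easier; so one may enlarge $\ov a$ as needed to arrange both conditions at once without losing either. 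The quasi-isometry conclusion itself then requires no further computation beyond the black-box application of \v Svarc--Milnor.
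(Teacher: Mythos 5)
Your proposal is correct and follows essentially the same route as the paper: identify (1) with the relative property (OB) of $V_{\ov a}$ via Lemma \ref{condition for OB}, identify (2) with connectedness of $\mathbb X_{\ov a,\ku R}$ and hence finite generation over $V_{\ov a}$, and then apply the \v{S}varc--Milnor lemma together with Lemma \ref{metrically proper model} for the moreover clause. One small slip in your closing paragraph: finite generation over the \emph{smaller} subgroup $V_{(\ov a,\ov b)}\subseteq V_{\ov a}$ is harder, not easier, to achieve; but this does not matter, since your first paragraph already secures a single tuple correctly by noting that an open generating set with relative property (OB) contains some $V_{\ov a}$ and is covered by finitely many translates of it.
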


\begin{proof}
Note that (1) is simply a restatement of $V_{\ov a}$ having property (OB) relative to ${\rm Aut}(\bf M)$, while (2) states that  there is $\ku R$ so that the graph $\mathbb X_{\ov a, \ku R}$ is connected, i.e., that ${\rm Aut}(\bf M)$ is finitely generated over $V_{\ov a}$. Together, these two properties are equivalent to the existence of a compatible left-invariant metric maximal for large distances.

For the moreover part, note that, as $\mathbb X_{\ov a, \ku R}$ is a connected graph, the metric space $ (\mathbb X_{\ov a, \ku R}, \rho_{\ov a, \ku R})$ is large scale geodesic. Thus, as the continuous isometric action ${\rm Aut}({\bf M})\curvearrowright \mathbb (\mathbb X_{\ov a, \ku S}, \rho_{\ov a, \ku S})$ is transitive (hence cobounded) and metrically proper, it follows from the \v{S}varc--Milnor lemma that  
$$
g\in {\rm Aut}({\bf M})\mapsto g\cdot \ov a \in \mathbb X_{\ov a, \ku R}
$$
is a quasi-isometry between $ {\rm Aut}({\bf M})$ and $(\mathbb X_{\ov a, \ku R}, \rho_{\ov a, \ku R})$.
\end{proof}

In cases where ${\rm Aut}(\bf M)$ may not admit a compatible left-invariant maximal metric, but only a metrically proper metric, it is still useful to have an explicit calculation of this. For this, the following lemma will be useful.
\begin{lemme}
Suppose $\ov a$ is finite tuple in a countable structure $\bf M$. Let also $\ku R_1\subseteq \ku R_2\subseteq \ku R_3\subseteq \ldots$ be an exhaustive sequence of finite sets of orbital types on $\bf M$ and define a metric $\rho_{\ov a, (\ku R_n)}$ on $\ku O(\ov a)$ by
\[\begin{split}
\rho_{\ov a, (\ku R_n)}(\ov b,\ov c)=&\\
\min\Big(\sum_{i=1}^kn_i\cdot & \rho_{\ov a,\ku R_{n_i}}(\ov d_{i-1},\ov d_i)\Del n_i\in \N\;\;\&\;\; \ov d_i\in \ku O(\ov a)\;\;\&\;\;  \ov d_0=\ov b\;\;\&\;\; \ov d_k=\ov c\Big).
\end{split}\]
Assuming that $V_{\ov a}$ has property (OB) relative to ${\rm Aut}(\bf M)$, then the isometric action ${\rm Aut}({\bf M})\curvearrowright \big(\ku O(\ov a),\rho_{\ov a, (\ku R_n)}\big)$ is metrically proper.
\end{lemme}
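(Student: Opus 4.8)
The plan is to reduce the metric properness of the compound metric $\rho_{\ov a, (\ku R_n)}$ to the properness already established for a single graph metric in Lemma \ref{metrically proper model}. Before turning to that, I would briefly record why $\rho_{\ov a, (\ku R_n)}$ is a genuine ${\rm Aut}(\bf M)$-invariant metric, since this is presupposed by the statement. Symmetry and the triangle inequality are immediate from the infimum-over-concatenations form of the definition, and positivity holds because any path joining distinct points $\ov b\neq \ov c$ must contain a step with $\ov d_{i-1}\neq\ov d_i$, contributing $n_i\cdot\rho_{\ov a,\ku R_{n_i}}(\ov d_{i-1},\ov d_i)\geqslant 1\cdot 1$ (here $n_i\geqslant 1$). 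Finiteness uses that the sequence $(\ku R_n)$ is exhaustive: for distinct orbit-equivalent $\ov b,\ov c$ the single orbital type $\ku O(\ov b,\ov c)$ lies in $\ku R_n$ for all large $n$, so $\rho_{\ov a,\ku R_n}(\ov b,\ov c)=1$ and the one-step path has finite cost. Invariance is inherited from the invariance of each $\rho_{\ov a,\ku R_n}$, as ${\rm Aut}(\bf M)$ carries paths to paths preserving every $\rho_{\ov a,\ku R_{n_i}}$-length.

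The core of the argument is a containment of balls. Fix $K$ and take any $g$ with $\rho_{\ov a, (\ku R_n)}(\ov a, g\ov a)\leqslant K$, witnessed by a path $\ov d_0=\ov a,\ldots,\ov d_k=g\ov a$ and indices $n_1,\ldots,n_k$ with $\sum_i n_i\cdot\rho_{\ov a,\ku R_{n_i}}(\ov d_{i-1},\ov d_i)\leqslant K$. The key observation, to be verified per step, is that
\[
\rho_{\ov a,\ku R_K}(\ov d_{i-1},\ov d_i)\leqslant n_i\cdot\rho_{\ov a,\ku R_{n_i}}(\ov d_{i-1},\ov d_i)\qquad\text{for every }i.
\]
Indeed, if $\ov d_{i-1}=\ov d_i$ both sides vanish; otherwise the right-hand distance is finite (the total being finite) and hence at least $1$, forcing $n_i\leqslant K$, so $\ku R_{n_i}\subseteq\ku R_K$. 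Enlarging the family of orbital types only adds edges, so the shortest-path metric can only decrease, giving $\rho_{\ov a,\ku R_K}(\ov d_{i-1},\ov d_i)\leqslant\rho_{\ov a,\ku R_{n_i}}(\ov d_{i-1},\ov d_i)\leqslant n_i\cdot\rho_{\ov a,\ku R_{n_i}}(\ov d_{i-1},\ov d_i)$. Summing these per-step bounds and applying the triangle inequality for $\rho_{\ov a,\ku R_K}$ along the path yields
\[
\rho_{\ov a,\ku R_K}(\ov a, g\ov a)\leqslant\sum_{i=1}^k\rho_{\ov a,\ku R_K}(\ov d_{i-1},\ov d_i)\leqslant\sum_{i=1}^k n_i\cdot\rho_{\ov a,\ku R_{n_i}}(\ov d_{i-1},\ov d_i)\leqslant K.
\]
Consequently $\{g\del\rho_{\ov a,(\ku R_n)}(\ov a,g\ov a)\leqslant K\}\subseteq\{g\del\rho_{\ov a,\ku R_K}(\ov a,g\ov a)\leqslant K\}$.

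To conclude, I would invoke Lemma \ref{metrically proper model}: since $V_{\ov a}$ has property (OB) relative to ${\rm Aut}(\bf M)$, the set on the right has property (OB) relative to ${\rm Aut}(\bf M)$. Because property (OB) relative to a group passes trivially to subsets (finite diameter under every compatible left-invariant metric is inherited by subsets), the set on the left has property (OB) as well, which is exactly metric properness of the action at level $K$. Letting $K$ range over all natural numbers finishes the proof.

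The step I expect to require the most care is pinning down that the weights $n_i$ occurring along any cost-$\leqslant K$ path are automatically bounded by $K$, since this is what confines the entire analysis to the single finite family $\ku R_K$ and makes Lemma \ref{metrically proper model} applicable. The remaining points — the monotonicity $\ku R_{n_i}\subseteq\ku R_K\Rightarrow\rho_{\ov a,\ku R_K}\leqslant\rho_{\ov a,\ku R_{n_i}}$ and the harmless treatment of degenerate steps $\ov d_{i-1}=\ov d_i$ — are straightforward once this boundedness is in hand.
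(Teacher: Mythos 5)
Your proof is correct and follows essentially the same route as the paper: both reduce to the ball inclusion $\{g\del \rho_{\ov a,(\ku R_n)}(\ov a,g\ov a)\leqslant K\}\subseteq\{g\del \rho_{\ov a,\ku R_K}(\ov a,g\ov a)\leqslant K\}$ and then invoke Lemma \ref{metrically proper model}. The only difference is that you spell out the justification of this inclusion (the bound $n_i\leqslant K$ on the weights and the monotonicity of the graph metrics), which the paper asserts without detail.
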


\begin{proof}
Let us first note that, since  the sequence $(\ku R_n)$ is exhaustive, every orbital type $\ku O(\ov b, \ov c)$ eventually belongs to some $\ku R_n$, whereby $\rho_{\ov a, (\ku R_n)}(\ov b,\ov c)$ is finite. Also, $\rho_{\ov a, (\ku R_n)}$ satisfies the triangle inequality by definition and hence is a metric. 

Note now that, since the $\ku R_n$ are increasing with $n$, we have, for all $m\in \N$,
$$
\rho_{\ov a, (\ku R_n)}(\ov b,\ov c)\leqslant m\;\;\saa\;\; \rho_{\ov a, \ku R_m}(\ov b,\ov c)\leqslant m
$$
and thus 
$$
\{g\in {\rm Aut}({\bf M})\del \rho_{\ov a, (\ku R_n)}(\ov a,g\ov a)\leqslant m\}\subseteq \{g\in {\rm Aut}({\bf M})\del \rho_{\ov a, \ku R_m}(\ov a,g\ov a)\leqslant m\}.
$$
By Lemma \ref{metrically proper model}, the latter set has property (OB) relative to ${\rm Aut}(\bf M)$, so the action ${\rm Aut}({\bf M})\curvearrowright \big(\ku O(\ov a),\rho_{\ov a, (\ku R_n)}\big)$ is metrically proper.
\end{proof}


\section{Homogeneous and atomic models}

\subsection{Definability of metrics}
Whereas the preceding sections have largely concentrated on the automorphism group ${\rm Aut}(\bf M)$ of a countable structure $\bf M$ without much regard to the actual structure $\bf M$, its language $\ku L$ or its theory $T={\rm Th}(\bf M)$, in the present section, we shall study how the theory $T$ may directly influence the large scale geometry of ${\rm Aut}(\bf M)$.

We recall that a structure $\bf M$ is {\em $\omega$-homogeneous} if, for all finite tuples $\ov a$ and $\ov b$ in $\bf M$ with the same type ${\rm tp}^{\bf M}(\ov a)={\rm tp}^{\bf M}(\ov  b)$ and all $c$ in $\bf M$, there is some $d$ in $\bf M$ so that ${\rm tp}^{\bf M}(\ov a,c)={\rm tp}^{\bf M}(\ov  b, d)$. By a back and forth construction, one sees that, in case $\bf M$ is countable, $\omega$-homogeneity is equivalent to the  condition
$$
{\rm tp}^{\bf M}(\ov a)={\rm tp}^{\bf M}(\ov  b)\;\;\equi\;\; \ku O(\ov a)=\ku O(\ov b).
$$
In other words, every orbital type $\ku O(\ov a)$ is type $\tom$-definable, i.e., type definable without parameters.

For a stronger notion, we say that $\bf M$ is {\em ultrahomogeneous} if it satisfies
$$
{\rm qftp}^{\bf M}(\ov a)={\rm qftp}^{\bf M}(\ov  b)\;\;\equi\;\; \ku O(\ov a)=\ku O(\ov b),
$$
where ${\rm qftp}^{\bf M}(\ov a)$ denotes the quantifier-free type of $\ov a$.  In other words, every orbital type $\ku O(\ov a)$ is defined by the quantifier-free type ${\rm qftp}^{\bf M}(\ov a)$.

Another  requirement is to demand that each individual orbital type $\ku O(\ov a)$ is $\tom$-definable in $\bf M$, i.e., definable by a single formula $\phi(\ov x)$ without parameters, that is, so that $\ov b\in \ku O(\ov a)$ if and only if ${\bf M}\models\phi(\ov b)$. We note that such a $\phi$ necessarily isolates the type ${\rm tp}^{\bf M}(\ov a)$. Indeed, suppose $\psi\in {\rm tp}^{\bf M}(\ov a)$. Then, if ${\bf M}\models \phi(\ov b)$, we have   $\ov b\in \ku O(\ov a)$ and thus also ${\bf M}\models \psi(\ov b)$, showing that ${\bf M}\models\a \ov x\; ( \phi\til \psi)$. Conversely, suppose $\bf M$ is a countable  $\omega$-homogeneous structure and $\phi(\ov x)$ is a formula without parameters isolating some type ${\rm tp}^{\bf M}(\ov a)$. Then, if ${\bf M}\models \phi(\ov b)$, we have ${\rm tp}^{\bf M}(\ov a)={\rm tp}^{\bf M}(\ov  b)$ and thus, by $\omega$-homogeneity,  $\ov b\in \ku O(\ov a)$.

We recall that a model $\bf M$ is {\em atomic} if every type realised in $\bf M$ is isolated. As is easy to verify (see Lemma 4.2.14 \cite{marker}), countable atomic models are $\omega$-homogeneous. So, by the discussion above,   we see that a countable model $\bf M$ is atomic if and only if every orbital type $\ku O(\ov a)$ is $\tom$-definable.

For example, if $\bf M$ is a locally finite ultrahomogeneous structure in a finite language $\ku L$, then $\bf M$ is atomic. This follows from the fact that if $\bf A$ is a finite structure in a finite language, then its isomorphism type is described by a single quantifier-free formula.

\begin{lemme}\label{definability}
Suppose $\ov a$ is a finite tuple in a countable atomic model $\bf M$.  Let $\ku S$ be a finite collection of orbital types in $\bf M$ and  $\rho_{\ov a, \ku S}$ denote the corresponding shortest path metric on $\mathbb X_{\ov a, \ku S}$.
Then, for every $n\in \N$, the relation $\rho_{\ov a, \ku S}(\ov b, \ov c)\leqslant n$ is $\tom$-definable in $\bf M$. 

Now, suppose instead that $\ku S_1\subseteq \ku S_2\subseteq \ldots$ is an exhaustive sequence of finite sets of orbital types on $\bf M$. Then, for every $n\in \N$, the relation $\rho_{\ov a, (\ku S_m)}(\ov b, \ov c)\leqslant n$ is similarly  $\tom$-definable in $\bf M$.
\end{lemme}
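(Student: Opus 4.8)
The plan is to use atomicity to pass from orbital types to first-order formulas and then read off both metrics as explicitly quantified path conditions. Recall from the discussion preceding the lemma that, since $\bf M$ is atomic, every orbital type is $\tom$-definable. So first I would fix a formula $\alpha(\ov x)$ defining the vertex set $\ku O(\ov a)$ and, for each $\ku O\in\ku S$ (which we may assume to be an orbital type of tuples of length $2|\ov a|$), a formula $\theta_{\ku O}(\ov x,\ov y)$ with ${\bf M}\models\theta_{\ku O}(\ov b,\ov c)$ exactly when $\ku O(\ov b,\ov c)=\ku O$. As orbital types partition the tuples of a given length, the edge relation of $\mathbb X_{\ov a,\ku S}$ is then defined by the $\tom$-formula
$$
E(\ov x,\ov y)\;\equi\;\alpha(\ov x)\wedge\alpha(\ov y)\wedge\ov x\neq\ov y\wedge\Big(\bigvee_{\ku O\in\ku S}\theta_{\ku O}(\ov x,\ov y)\;\vee\;\bigvee_{\ku O\in\ku S}\theta_{\ku O}(\ov y,\ov x)\Big),
$$
where $\ov x\neq\ov y$ abbreviates the disjunction of the coordinatewise inequalities. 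Since $\rho_{\ov a,\ku S}(\ov b,\ov c)\leqslant n$ means exactly that there is a walk of length $n$ from $\ov b$ to $\ov c$ (padding shorter paths by stationary steps), the first claim follows from the $\tom$-formula
$$
\psi_n(\ov x,\ov y)\;\equi\;\alpha(\ov x)\wedge\alpha(\ov y)\wedge\e\ov d_1\cdots\e\ov d_{n-1}\ \bigwedge_{i=0}^{n-1}\big(\ov d_i=\ov d_{i+1}\;\vee\;E(\ov d_i,\ov d_{i+1})\big),
$$
read with $\ov d_0=\ov x$ and $\ov d_n=\ov y$.

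For the second statement, the crucial preliminary step is an a priori bound that turns the defining minimum into a finite search. Suppose $\rho_{\ov a,(\ku S_m)}(\ov b,\ov c)\leqslant n$, witnessed by a path $\ov d_0=\ov b,\ldots,\ov d_k=\ov c$ and weights $m_1,\ldots,m_k$. Discarding the segments with $\ov d_{i-1}=\ov d_i$ (which contribute $0$), I may assume $\ov d_{i-1}\neq\ov d_i$ throughout; then $\rho_{\ov a,\ku S_{m_i}}(\ov d_{i-1},\ov d_i)\geqslant 1$ (and is finite, since the total is), while $m_i\geqslant 1$ because the sequence is indexed from $1$. Hence each summand $m_i\cdot\rho_{\ov a,\ku S_{m_i}}(\ov d_{i-1},\ov d_i)$ is at least $1$ and dominates both $m_i$ and $\rho_{\ov a,\ku S_{m_i}}(\ov d_{i-1},\ov d_i)$. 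As these nonnegative summands total at most $n$, I conclude $k\leqslant n$, every weight $m_i\leqslant n$, and every segment distance $\rho_{\ov a,\ku S_{m_i}}(\ov d_{i-1},\ov d_i)\leqslant n$.

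Consequently $\rho_{\ov a,(\ku S_m)}(\ov b,\ov c)\leqslant n$ holds if and only if there are $k\leqslant n$, weights $(m_1,\ldots,m_k)\in\{1,\ldots,n\}^k$, and values $(v_1,\ldots,v_k)\in\{0,\ldots,n\}^k$ with $\sum_{i=1}^k m_iv_i\leqslant n$, and intermediate vertices $\ov d_1,\ldots,\ov d_{k-1}\in\ku O(\ov a)$ with $\rho_{\ov a,\ku S_{m_i}}(\ov d_{i-1},\ov d_i)=v_i$ for every $i$. For fixed $k,(m_i),(v_i)$ this is a $\tom$-formula: by the first part each condition $\rho_{\ov a,\ku S_{m_i}}(\ov u,\ov w)\leqslant v_i$, and hence also $\rho_{\ov a,\ku S_{m_i}}(\ov u,\ov w)=v_i$ (as $\leqslant v_i$ but not $\leqslant v_i-1$), is $\tom$-definable, and one existentially quantifies over the finitely many tuples $\ov d_1,\ldots,\ov d_{k-1}$, each guarded by $\alpha$. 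Taking the finite disjunction over all admissible triples $(k,(m_i),(v_i))$ gives a single $\tom$-formula, which proves the second claim.

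The step I expect to be the main obstacle is exactly this bounding argument: a priori the minimum defining $\rho_{\ov a,(\ku S_m)}$ ranges over paths of unbounded length and over weights $m_i$ of unbounded size, so the condition looks genuinely infinitary. The point that rescues finiteness is that, on a nontrivial segment, the factor $m_i\geqslant 1$ and the graph distance $\rho_{\ov a,\ku S_{m_i}}\geqslant 1$ force the summand to be at least $1$ while simultaneously bounding $m_i$ and the distance by that summand; since the total is capped at $n$, all three unbounded parameters collapse into the range $\{0,\ldots,n\}$, reducing everything to a finite disjunction of instances of the first part.
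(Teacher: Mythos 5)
Your proof is correct and follows essentially the same route as the paper's: atomicity makes each orbital type $\tom$-definable, and bounded distance in $\mathbb X_{\ov a,\ku S}$ (respectively for $\rho_{\ov a,(\ku S_m)}$) is then expressed as a finite disjunction of existential path formulas. Your explicit a priori bound showing that the path length $k$, the weights $m_i$ and the segment distances are all at most $n$ is a point the paper's proof leaves implicit (its inner disjunction over $\sum_i m_i\cdot n_i\leqslant n$ is only finite once one discards stationary segments as you do), so this is a welcome clarification rather than a deviation.
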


\begin{proof}Without loss of generality, every orbital type in $\ku S$ is of the form $\ku O(\ov b, \ov c)$, where $\ov b, \ov c\in \ku O(\ov a)$. Moreover, for such $\ku O(\ov b, \ov c)\in \ku S$, we may suppose that also $\ku O(\ov c, \ov b)\in \ku S$. 
Let now  $\phi_1(\ov x, \ov y), \ldots, \phi_k(\ov x, \ov y)$ be formulas without parameters defining the orbital types in $\ku S$. Then
\[\begin{split}
\rho_{\ov a, \ku S}(\ov b, \ov c)&\leqslant n \;\;\equi \\
{\bf M}\models\;\;&\bigvee_{m=0}^n\;\e \ov y_0,\ldots, \ov y_{m}\; \Big(\bigwedge_{j=0}^{m-1}\bigvee_{i=1}^k\phi_i(\ov y_j, \ov y_{j+1})\;\&\;  \ov b=\ov y_0\;\&\; \ov c=\ov y_m\Big),
\end{split}\]
showing that $\rho_{\ov a, \ku S}(\ov b, \ov c)\leqslant n$ is $\tom$-definable in $\bf M$. 

For the second case, pick formulas $\phi_{m,n}(\ov x,\ov y)$ without parameters defining the relations
$\rho_{\ov a, \ku S_m}(\ov b, \ov c)\leqslant n$ in $\bf M$. Then
\[\begin{split}
\rho_{\ov a, (\ku S_m)}&(\ov b, \ov c)\leqslant n\;\;\equi    \\
{\bf M}\models 
\bigvee_{k=0}^n&
\e \ov x_0, \ldots, \ov x_k 
\Big( 
\ov x_0=\ov b\;\&\;
\ov x_k=\ov c\;\&
\bigvee
\big\{   \bigwedge_{i=1}^k  \phi_{m_i, n_i}(\ov x_{i-1},\ov x_i)     \del       \sum_{i=1}^k m_i\cdot n_i\leqslant n       \big\}
\Big),
\end{split}\]
showing that $\rho_{\ov a, (\ku S_m)}(\ov b, \ov c)\leqslant n$ is $\tom$-definable in $\bf M$.
\end{proof}

\subsection{Stable metrics and theories}
We recall the following notion originating in the work of J.-L. Krivine and B. Maurey on stable Banach spaces \cite{KM}.
\begin{defi}
A metric $d$ on a set $X$ is said to be {\em stable} if,  for all $d$-bounded sequences $(x_n)$ and $(y_m)$ in $X$, we have
$$
\lim_{n\til \infty}\lim_{m\til \infty}d(x_n,y_m)=\lim_{m\til \infty}\lim_{n\til \infty}d(x_n,y_m),
$$
whenever both limits exist.
\end{defi}
We mention that stability of the metric is equivalent to requiring that the limit operations $\lim_{n\til \ku U}$ and $\lim_{m\til \ku V}$ commute over $d$ for all ultrafilters $\ku U$ and $\ku V$ on $\N$.

Now stability of metrics is tightly related to model theoretical stability of which we recall the definition.
\begin{defi}
Let $T$ be a complete theory of a countable language $\ku L$ and let $\kappa$ be an infinite cardinal number. We say that $T$ is {\em $\kappa$-stable} if, for all models ${\bf M}\models T$ and subsets $A\subseteq {\bf M}$ with $|A|\leqslant \kappa$, we have $|S^{\bf M}_n(A)|\leqslant \kappa$. Also, $T$ is {\em stable} if it is $\kappa$-stable for some infinite cardinal $\kappa$.
\end{defi}

In the following discussion, we shall always assume that $T$ is a complete theory with infinite models in a countable language $\ku L$. Of the various consequences of stability of $T$, the one most closely related to stability of metrics is the fact that, if $T$ is stable and ${\bf M}$ is a model of $T$, then there are no formula $\phi(\ov x, \ov y)$ and tuples $\ov a_n$, $\ov b_m$, $n,m\in \N$, so that 
$$
{\bf M}\models \phi(\ov a_n, \ov b_m)\;\equi \; n<m.
$$
Knowing this, the following lemma is straightforward.

\begin{lemme}\label{existence stable metric}
Suppose $\bf M$ is a countable atomic model of a stable theory $T$ and that  $\ov a$ is a finite tuple in $\bf M$. Let also $\rho$ be a metric on $\ku O(\ov a)$ so that, for every $n\in \N$, the relation $\rho(\ov b, \ov c)\leqslant n$ is $\tom$-definable in $\bf M$. Then $\rho$ is a stable metric. 
\end{lemme}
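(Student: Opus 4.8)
The plan is to contrapose the quoted consequence of stability: in a model of a stable theory no $\ku L$-formula $\phi(\ov x,\ov y)$ admits tuples $(\ov b_n),(\ov c_m)$ with ${\bf M}\models\phi(\ov b_n,\ov c_m)\iff n<m$ (the order property). So I would assume $\rho$ is \emph{not} stable and manufacture such a formula out of the threshold relations $\rho(\ov x,\ov y)\leqslant k$, which are $\tom$-definable by hypothesis. Using the ultrafilter reformulation recorded in the remark after the definition of stable metric, non-stability yields $\rho$-bounded sequences $(\ov b_n),(\ov c_m)$ in $\ku O(\ov a)$ and ultrafilters $\ku U,\ku V$ on $\N$ with $r:=\lim_{\ku U}\lim_{\ku V}\rho(\ov b_n,\ov c_m)\neq\lim_{\ku V}\lim_{\ku U}\rho(\ov b_n,\ov c_m)=:s$; without loss of generality $r<s$.

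I would next pin down an integer cut between the two iterated limits. Since the sequences are bounded and, for the metrics to which the lemma is applied (the shortest-path metrics $\rho_{\ov a,\ku S}$ and their weighted analogues $\rho_{\ov a,(\ku S_m)}$), $\rho$ takes integer values, each inner limit $\alpha_n:=\lim_{\ku V}\rho(\ov b_n,\ov c_m)$ and $\beta_m:=\lim_{\ku U}\rho(\ov b_n,\ov c_m)$ is attained on an ultrafilter-large set and is an integer. Hence $r,s$ are integers with $s-r\geqslant 1$, and I may fix an integer $k$ with $r\leqslant k<s$, so that $\{n:\alpha_n\leqslant k\}\in\ku U$ while $\{m:\beta_m>k\}\in\ku V$. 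Let $\phi_k(\ov x,\ov y)$ be the parameter-free formula given by the hypothesis that defines $\rho(\ov x,\ov y)\leqslant k$.

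The core of the argument is a recursive, back-and-forth extraction of subsequences realising the order pattern for $\phi_k$. Alternately I would choose $n_i$ from the $\ku U$-large set $\{n:\alpha_n\leqslant k\}$ arranged so that $\rho(\ov b_{n_i},\ov c_{m_l})>k$ for every previously selected $m_l$ (possible because each $\beta_{m_l}>k$ forces $\{n:\rho(\ov b_n,\ov c_{m_l})>k\}\in\ku U$, and a finite intersection of sets in $\ku U$ is again in $\ku U$), and choose $m_j$ from $\{m:\beta_m>k\}$ so that $\rho(\ov b_{n_l},\ov c_{m_j})\leqslant k$ for every $l\leqslant j$ (possible because each $\alpha_{n_l}\leqslant k$ forces $\{m:\rho(\ov b_{n_l},\ov c_m)\leqslant k\}\in\ku V$). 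This produces ${\bf M}\models\phi_k(\ov b_{n_i},\ov c_{m_j})$ exactly when $i\leqslant j$, i.e. the order property after a harmless reindexing, contradicting stability of $T$.

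The main obstacle I expect is precisely this selection step together with the requirement that a \emph{single} integer threshold $k$ cleanly separates the two iterated limits: both rest on the integer-valuedness of $\rho$ (so that the ultrafilter limits are genuinely attained and $r,s$ differ by at least $1$, leaving room for $k$) combined with the filter property that finitely many large sets still intersect largely. Once these are secured, the alternating pattern of truth values of $\phi_k$ falls out directly from the defining formulas, and no model theory beyond the quoted non-existence of the order property is needed.
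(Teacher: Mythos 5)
Your proposal is correct and follows essentially the same route as the paper: both derive a contradiction with the order property (the quoted consequence of stability) by turning a failure of the double-limit identity into a single parameter-free formula --- the paper uses a formula defining $\rho(\cdot,\cdot)=r$ where $r$ is the first iterated limit, while you use the threshold $\rho\leqslant k$ for an integer $k$ separating the two limits --- and then extracting subsequences realising the $n<m$ pattern. Your explicit ultrafilter bookkeeping and the remark on integer-valuedness of the relevant metrics merely make precise what the paper compresses into ``upon passing to subsequences,'' so there is no substantive difference in approach.
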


\begin{proof}
Suppose towards a contradiction that $\ov a_n, \ov b_m\in \ku O(\ov a)$ are bounded sequences in $\ku O(\ov a)$ so that
$$
r=\lim_{n\til \infty}\lim_{m\til \infty}\rho(\ov a_n,\ov b_m)\neq \lim_{m\til \infty}\lim_{n\til \infty}\rho(\ov a_n,\ov b_m)
$$
and pick a formula $\phi(\ov x, \ov y)$ so that 
$$
\rho(\ov b, \ov c)=r\;\;\equi\;\; {\bf M}\models \phi(\ov b, \ov c).
$$
Then, using $\a^\infty $ to denote ``for all, but finitely many'', we have
$$
\a ^\infty n\; \a^\infty m\;\;\; {\bf M}\models \phi(\ov a_n, \ov b_m),
$$
while 
$$
\a ^\infty m\; \a^\infty n\; \;\;{\bf M}\models \neg\phi(\ov a_n, \ov b_m).
$$
So, upon passing to subsequences of $(\ov a_n)$ and $(\ov b_m)$, we may suppose that 
$$
{\bf M}\models \phi(\ov a_n, \ov b_m)\;\;\equi\;\; n<m.
$$
However, the existence of such a formula $\phi$ and sequences $(\ov a_n)$ and $(\ov b_m)$ contradicts the stability of $T$.
\end{proof}

\begin{thm}
Suppose $\bf M$ is a countable atomic model of a stable theory $T$ so that ${\rm Aut}(\bf M)$ admits a maximal compatible left-invariant metric. Then  ${\rm Aut}(\bf M)$ admits a maximal compatible left-invariant metric, which, moreover, is stable. 
\end{thm}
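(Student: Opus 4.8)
The plan is to combine the structural characterisation from Theorem \ref{quasiisometry} with the definability results of Lemma \ref{definability} and the stability criterion of Lemma \ref{existence stable metric}. Since $\mathbf{M}$ is assumed to be a countable atomic model and ${\rm Aut}(\mathbf{M})$ admits a maximal compatible left-invariant metric, Theorem \ref{quasiisometry} guarantees a tuple $\ov a$ and a finite family $\ku R$ of orbital types satisfying conditions (1) and (2), such that the orbit map $g\mapsto g\cdot \ov a$ is a quasi-isometry between ${\rm Aut}(\mathbf{M})$ and the connected graph $(\mathbb X_{\ov a, \ku R}, \rho_{\ov a, \ku R})$. The natural candidate for the desired metric is the pullback of $\rho_{\ov a, \ku R}$ along this orbit map, namely the left-invariant metric $d$ on ${\rm Aut}(\mathbf{M})$ defined by $d(g,h)=\rho_{\ov a, \ku R}(g\ov a, h\ov a)$ (possibly after verifying it is genuinely compatible, or passing to a metric equivalent to it for small distances).

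First I would argue that this pullback metric is maximal. Because the orbit map is a quasi-isometry onto a large scale geodesic space and the action is transitive, metrically proper and continuous, the \v{S}varc--Milnor machinery already invoked in the proof of Theorem \ref{quasiisometry} shows that $d$ is quasi-isometric to any maximal metric, hence is itself maximal once one checks it is compatible and left-invariant; left-invariance is immediate from the invariance of $\rho_{\ov a, \ku R}$ under the ${\rm Aut}(\mathbf{M})$-action noted just after the definition of $\mathbb X_{\ov a, \ku S}$.

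Next I would establish stability of this metric. Applying Lemma \ref{definability} with $\ku S=\ku R$, for every $n\in\N$ the relation $\rho_{\ov a, \ku R}(\ov b,\ov c)\leqslant n$ is $\tom$-definable in $\mathbf{M}$, since $\mathbf{M}$ is atomic. Then Lemma \ref{existence stable metric}, applied to $\rho=\rho_{\ov a,\ku R}$ on $\ku O(\ov a)$, yields that $\rho_{\ov a,\ku R}$ is a stable metric on the orbit, using precisely the non-order property of formulas guaranteed by stability of $T$. Since stability of a metric on $X$ transfers to its pullback under any map into $X$ — the iterated limits $\lim_n\lim_m d(g_n,h_m)=\lim_n\lim_m \rho_{\ov a,\ku R}(g_n\ov a, h_m\ov a)$ are literally the iterated limits of $\rho_{\ov a,\ku R}$ evaluated on the images $g_n\ov a,\, h_m\ov a$, which remain bounded — stability of $d$ follows at once from stability of $\rho_{\ov a,\ku R}$.

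The main obstacle I anticipate is not the stability computation, which is essentially formal given the two lemmas, but rather the careful verification that the pullback of the graph metric is a genuine \emph{compatible} left-invariant metric on the topological group ${\rm Aut}(\mathbf{M})$, rather than merely a pseudometric or one inducing a coarser topology. The orbit map is only a quasi-isometry, so $d$ need not directly induce the topology near the identity; one must either thicken $d$ by adding a bounded compatible metric (which does not affect large-scale behaviour or stability of the bounded-distance relations) or invoke the general fact from \cite{large scale geom} that maximal metrics exist and are unique up to quasi-isometry, and then transport stability across this quasi-isometry. I would therefore spend the bulk of the argument reconciling the abstract maximal metric supplied by the hypothesis with the concrete, manifestly stable graph metric $\rho_{\ov a,\ku R}$, showing the two are quasi-isometric and that stability — being a property of the bounded-distance relations which are $\tom$-definable — is preserved.
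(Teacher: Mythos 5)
Your proposal follows essentially the same route as the paper: invoke Theorem \ref{quasiisometry} to get the quasi-isometry onto $(\mathbb X_{\ov a,\ku R},\rho_{\ov a,\ku R})$, use Lemma \ref{definability} and Lemma \ref{existence stable metric} to see that $\rho_{\ov a,\ku R}$ is stable, and repair the failure of compatibility of the pullback pseudometric by adding a bounded compatible left-invariant metric. The paper does exactly this, setting $d(g,f)=D(g,f)+\rho_{\ov a,\ku R}(g\ov a,f\ov a)$ with $D(g,f)=\sum_n 2^{-n}\chi_{\neq}(g(b_n),f(b_n))$.

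The one point you pass over too quickly is the parenthetical claim that adding a bounded compatible metric ``does not affect \dots stability.'' Stability of a metric is not preserved under adding an arbitrary bounded metric: the sum of a stable and an unstable metric can fail to be stable, since the double limits of the sum are the sums of the double limits only when each summand's limits exist and commute. So you must choose the auxiliary bounded compatible metric to be stable itself; the paper does this by taking the explicit $D$ above, whose stability follows from its being an absolutely summable series of the (stable, two-valued) functions $(g,f)\mapsto 2^{-n}\chi_{\neq}(g(b_n),f(b_n))$. With that one explicit choice inserted, your argument is complete and coincides with the paper's.
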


\begin{proof}
By Theorem \ref{quasiisometry}, there is a finite tuple $\ov a$ and a finite family $\ku S$ of orbital types so that the mapping 
$$
g\in {\rm Aut}({\bf M})\mapsto g\ov a\in \mathbb X_{\ov a, \ku S}
$$
is a quasi-isometry of ${\rm Aut}({\bf M})$ with  $(\mathbb X_{\ov a, \ku S},  \rho_{\ov a, \ku S})$. Also, by Lemma \ref{existence stable metric}, $\rho_{\ov a, \ku S}$ is a stable metric on $\mathbb X_{\ov a, \ku S}$. Define also a compatible left-invariant stable metric $D\leqslant 1$ on ${\rm Aut}(\bf M)$ by
$$
D(g,f)=\sum_{n=1}^\infty \frac {\chi_{\neq}(g(b_n),f(b_n))} {2^{n}},
$$
where $(b_n)$ is an enumeration of $\bf M$ and $\chi_{\neq}$ is the characteristic funtion of inequality. The stability of  $D$ follows easily from it being an absolutely summable series of the functions $(g,f)\mapsto \frac {\chi_{\neq}(g(b_n),f(b_n))} {2^{n}}$.

Finally, let 
$$
d(g,f)=D(g,f)+\rho_{\ov a, \ku S}(g\ov a, f\ov a).
$$
Then $d$ is a maximal compatible left-invariant and stable metric on ${\rm Aut}(\bf M)$. 
\end{proof}

Similarly, when ${\rm Aut}(\bf M)$ is only assumed to have a metrically proper metric, this can also be taken to be stable. This can be done by working with the metric $\rho_{\ov a, (\ku S_n)}$, where $\ku S_1\subseteq \ku S_2\subseteq \ldots$ is an exhaustive sequence of finite sets of orbital types on $\bf M$, instead of $\rho_{\ov a, \ku S}$.

\begin{thm}
Suppose $\bf M$ is a countable atomic model of a stable theory $T$ so that ${\rm Aut}(\bf M)$ admits a  compatible metrically proper left-invariant metric. Then  ${\rm Aut}(\bf M)$ admits a compatible metrically proper left-invariant metric, which, moreover, is stable. 
\end{thm}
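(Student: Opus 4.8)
The plan is to run the argument of the preceding theorem almost verbatim, but with the metric $\rho_{\ov a,(\ku S_n)}$ attached to an exhaustive sequence $\ku S_1\subseteq\ku S_2\subseteq\cdots$ of finite families of orbital types in place of the single-family metric $\rho_{\ov a,\ku S}$. First I would extract a suitable tuple: since ${\rm Aut}(\bf M)$ admits a metrically proper metric it has the local property (OB), so by the criterion for the local property (OB) proved above, combined with Lemma~\ref{condition for OB}, there is a finite tuple $\ov a$ in $\bf M$ such that $V_{\ov a}$ has property (OB) relative to ${\rm Aut}(\bf M)$. I would fix such an $\ov a$ together with any exhaustive sequence $\ku S_1\subseteq \ku S_2\subseteq \cdots$ of finite sets of orbital types.

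Next I would record the two properties of $\rho_{\ov a,(\ku S_n)}$ that the construction needs. By the second half of Lemma~\ref{definability}, for every $n$ the relation $\rho_{\ov a,(\ku S_n)}(\ov b,\ov c)\leqslant n$ is $\tom$-definable in $\bf M$; since $\bf M$ is a countable atomic model of the stable theory $T$, Lemma~\ref{existence stable metric} then yields that $\rho_{\ov a,(\ku S_n)}$ is a stable metric on $\ku O(\ov a)$. On the other hand, because $V_{\ov a}$ has property (OB) relative to ${\rm Aut}(\bf M)$, the earlier lemma on exhaustive sequences shows that the isometric action ${\rm Aut}(\bf M)\curvearrowright(\ku O(\ov a),\rho_{\ov a,(\ku S_n)})$ is metrically proper.

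I would then assemble the metric exactly as before. Let $D\leqslant 1$ be the stable, compatible, left-invariant metric on ${\rm Aut}(\bf M)$ given by $D(g,f)=\sum_{n\geqslant 1}2^{-n}\chi_{\neq}(g(b_n),f(b_n))$ for a fixed enumeration $(b_n)$ of $\bf M$, and set
$$
d(g,f)=D(g,f)+\rho_{\ov a,(\ku S_n)}(g\ov a,f\ov a).
$$
Left-invariance is clear since both summands are left-invariant (the action on $\ku O(\ov a)$ is isometric); $d$ is a metric because $D$ is, and it is compatible because $D$ is compatible while the added term is a continuous pseudometric (the map $g\mapsto g\ov a$ is locally constant, as $V_{\ov a}$ is open), so $d\geqslant D$ is continuous and induces the group topology; and $d$ is stable because it is the sum of the two stable pseudometrics $D$ and $(g,f)\mapsto\rho_{\ov a,(\ku S_n)}(g\ov a,f\ov a)$, the second being stable as the pullback of the stable metric $\rho_{\ov a,(\ku S_n)}$ along $g\mapsto g\ov a$, and iterated ultrafilter limits of bounded sequences distribute over the sum.

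The one point requiring a genuine, if short, argument — and the step I would treat as the main obstacle — is metric properness of $d$, namely that the $d$-bounded sets are exactly the sets with property (OB) relative to ${\rm Aut}(\bf M)$. One inclusion is immediate: any set with property (OB) has finite diameter in every compatible left-invariant metric, hence is $d$-bounded. For the converse, suppose $B$ is $d$-bounded of diameter $K$ and fix $g_0\in B$; then for every $g\in B$ we have $\rho_{\ov a,(\ku S_n)}(\ov a,g_0\inv g\ov a)=\rho_{\ov a,(\ku S_n)}(g_0\ov a,g\ov a)\leqslant K$, so that $g_0\inv B\subseteq\{h\in{\rm Aut}(\bf M)\del \rho_{\ov a,(\ku S_n)}(\ov a,h\ov a)\leqslant K\}$, and this latter set has property (OB) by metric properness of the action. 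Since property (OB) is invariant under left translation, $B=g_0\cdot(g_0\inv B)$ has property (OB) as well. This establishes that $d$ is a compatible, metrically proper, left-invariant metric that is moreover stable, completing the proof.
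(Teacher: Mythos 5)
Your proposal is correct and follows exactly the route the paper intends: the paper itself only remarks that the proof of the preceding theorem should be repeated with $\rho_{\ov a,(\ku S_n)}$ in place of $\rho_{\ov a,\ku S}$, and you have filled in precisely those details (definability via the second half of Lemma~\ref{definability}, stability via Lemma~\ref{existence stable metric}, properness of the action via the lemma on exhaustive sequences, and the sum $D+\rho_{\ov a,(\ku S_n)}(g\ov a,f\ov a)$). The verifications of compatibility, stability under sums, and metric properness of $d$ are all sound.
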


Using the equivalence of the local property (OB) and the existence of metrically proper metrics and that the existence of a metrically proper stable metric implies metrically proper actions on reflexive spaces, we have the following corollary.
\begin{cor}\label{atomic stable reflexive}
Suppose $\bf M$ is a countable atomic model of a stable theory $T$ so that ${\rm Aut}(\bf M)$ has the local property (OB). Then ${\rm Aut}(\bf M)$ admits a metrically proper continuous affine isometric action on a reflexive Banach space. 
\end{cor}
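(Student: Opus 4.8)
The plan is to chain together three facts, producing in turn a metric on ${\rm Aut}({\bf M})$ that is first metrically proper, then additionally stable, and finally converting such a metric into the desired affine isometric action on a reflexive space.

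First I would invoke the equivalence, recorded in the introduction, between the local property (OB) and the existence of a compatible metrically proper left-invariant metric. Since ${\rm Aut}({\bf M})$ is assumed to have the local property (OB), this immediately furnishes some compatible metrically proper left-invariant metric on ${\rm Aut}({\bf M})$.

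Next I would upgrade this metric to a stable one. This is exactly the content of the theorem established just above the corollary: for a countable atomic model $\bf M$ of a stable theory $T$ whose automorphism group carries a metrically proper metric, the metric can in fact be chosen stable. Concretely, one fixes a tuple $\ov a$ witnessing the local property (OB) together with an exhaustive sequence $\ku S_1\subseteq \ku S_2\subseteq \ldots$ of finite families of orbital types, forms the metric $\rho_{\ov a, (\ku S_n)}$ on $\ku O(\ov a)$, and observes via Lemma \ref{definability} that each relation $\rho_{\ov a,(\ku S_n)}(\ov b,\ov c)\leqslant n$ is $\tom$-definable in $\bf M$ because $\bf M$ is atomic; Lemma \ref{existence stable metric} then yields stability of $\rho_{\ov a,(\ku S_n)}$ from stability of $T$, and combining this with the standard bounded stable metric $D(g,f)=\sum_{n} 2^{-n}\chi_{\neq}(g(b_n),f(b_n))$ produces a compatible metrically proper left-invariant stable metric on ${\rm Aut}({\bf M})$.

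Finally I would apply the general principle, imported from the companion paper \cite{large scale geom}, that a metrisable group admitting a metrically proper stable left-invariant metric admits a metrically proper continuous affine isometric action on a reflexive Banach space; feeding in the stable metric produced in the previous step completes the proof. The genuinely deep ingredient is this last step, where the Krivine--Maurey theory of stable metrics is used to manufacture a reflexive space and an affine isometric action from a purely metric hypothesis. Within the present argument, however, it enters as a black box, so the only real work is to verify that atomicity and stability of $T$ can be used \emph{simultaneously} to render a single metric both metrically proper and stable, which is precisely what the cited theorem guarantees.
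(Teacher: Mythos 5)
Your proposal matches the paper's argument exactly: the paper derives the corollary by combining the equivalence of the local property (OB) with the existence of a metrically proper metric, the immediately preceding theorem upgrading such a metric to a stable one for countable atomic models of stable theories, and the black-box result from \cite{large scale geom} that a metrically proper stable left-invariant metric yields a metrically proper continuous affine isometric action on a reflexive Banach space. The extra detail you supply about $\rho_{\ov a,(\ku S_n)}$ and Lemmas \ref{definability} and \ref{existence stable metric} is just the content of that intermediate theorem, so there is nothing to add.
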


We should briefly consider the hypotheses of the preceding theorem. So, in the following, let $T$ be a complete theory with infinite models in a countable language $\ku L$. 
We recall that that  ${\bf M}\models T$ is said to be a {\em prime model} of $T$ if $\bf M$ admits an elementary embedding into every other model of $T$. By the omitting types theorem, prime models are necessarily atomic. In fact, ${\bf M}\models T$ is a prime model of $T$ if and only if ${\bf M}$ is both countable and atomic. Moreover, the theory $T$ admits a countable atomic model if and only if, for every $n$, the set of isolated types is dense in the type space $S_n(T)$. In particular, this happens if $S_n(T)$ is countable for all $n$.

Now, by definition,  $T$ is $\om$-stable, if, for every model ${\bf M}\models T$, countable subset $A\subseteq \bf M$ and $n\geqslant 1$, the type space $S_n^{\bf M}(A)$ is countable. In particular, $S_n(T)$ is countable for every $n$ and hence $T$ has a countable atomic model $\bf M$. Thus, provided that ${\rm Aut}(\bf M)$ has the local property (OB), Corollary \ref{atomic stable reflexive} gives a metrically proper affine isometric action of this automorphism group.

\subsection{Fra\"iss\'e classes}A useful tool in the study of ultrahomogeneous countable structures is the theory of R. Fra\"iss\'e that alllows us to view every such object as a so called limit of the family of its finitely generated substructures. In the following, we fix a countable language $\ku L$. 

\begin{defi}
A {\em Fra\"iss\'e class} is a class $\mathcal K$ of finitely generated $\ku L$-structures so that 
\begin{enumerate}
\item $\kappa$ contains only countably many isomorphism types,
\item (hereditary property) if ${\bf A}\in \mathcal K$ and $\bf B$ is a finitely generated $\ku L$-structure embeddable into ${\bf A}$, then ${\bf B}\in \mathcal K$,
\item (joint embedding property) for all ${\bf A}, {\bf B}\in \mathcal K$, there some ${\bf C}\in \mathcal K$ into which both ${\bf A}$ and ${\bf B}$ embed, 
\item (amalgamation property) if ${\bf A},{\bf B}_1, {\bf B}_2\in \mathcal K$ and $\eta_i\colon {\bf A}\inj {\bf B}_i$ are embeddings, then there is some ${\bf C}\in \mathcal K$ and embeddings $\zeta_i\colon {\bf B}_i\inj {\bf C}$ so that $\zeta_1\circ \eta_1=\zeta_2\circ\eta_2$.
\end{enumerate}
\end{defi}

Also, if ${\bf M}$ is a countable $\ku  L$-structure, we let ${\rm Age}(\bf M)$ denote the class of all finitely generated $\ku L$-structures embeddable into $\bf M$. 

The fundamental theorem of Fra\"iss\'e \cite{fraisse} states that, for every Fra\"iss\'e class $\mathcal K$, there is a unique (up to isomorphism) countable ultrahomogeneous structure $\bf K$, called the {\em Fra\"iss\'e limit} of $\mathcal K$, so that ${\rm Age}(\bf K)=\mathcal K$ and, conversely, if ${\bf M}$ is a countable ultrahomogeneous structure, then ${\rm Age}(\bf M)$ is a Fra\"iss\'e class. 

Now, if $\bf K$ is the limit of a Fra\"iss\'e class $\ku K$, then $\bf K$ is ultrahomogeneous and hence its orbital types correspond to quantifier-free types realised in $\bf K$. Now, as ${\rm Age}(\bf K)=\mathcal K$, for every quantifier-free type $p$ realised by some tuple $\ov a$ in ${\bf K}$, we see that the structure ${\bf A}=\langle\ov a\rangle$ generated by $\ov a$ belongs to $\ku K$ and that the expansion $\langle {\bf A}, \ov a\rangle$ of ${\bf A}$ with names for $\ov a$ codes $p$ by
$$
\phi(\ov x)\in p\;\equi \;\langle {\bf A}, \ov a\rangle\models \phi(\ov a).
$$
Vice versa, since ${\bf  A}$ is generated by $\ov a$, the quantifier free type ${\rm qftp}^{\bf A}(\ov a)$ fully determines the expanded structure $\langle{\bf A},\ov a\rangle$ up to isomorphism. To conclude, we see that orbital types $\ku O(\ov a)$ in $\bf K$ correspond to isomorphism types of expanded structures $\langle{\bf A},\ov a\rangle$, where $\ov a$ is a finite tuple generating some ${\bf A}\in \ku K$. This also means that Theorem \ref{quasiisometry} may be reformulated using these isomorphism types in place of orbital types. We leave the details to the reader and instead concentrate on a more restrictive setting.

Suppose now that ${\bf K}$ is the  limit of a Fra\"iss\'e class $\ku K$ consisting of finite structures, that is, if ${\bf K}$ is {\em locally finite}, meaning that every finitely generated substructure is finite. Then we note that every ${\bf A}\in \ku K$ can simply be enumerated by some finite tuple $\ov a$. Moreover, if ${\bf A}$ is a finite substructure of $\bf K$, then the pointwise stabiliser $V_{\bf A}$ is a finite index subgroup of the {\em setwise stabiliser}
$$
V_{\{\bf A\}}=\{g\in {\rm Aut}({\bf K})\del g{\bf A}={\bf A}\},
$$
so, in particular, $V_{\{\bf A\}}$ has property (OB) relative to ${\rm Aut}(\bf K)$ if and only if $V_{\bf A}$ does. Similarly, if ${\bf B}$ is another finite substructure, then  $V_{\bf A}$ is finitely generated over $V_{\{\bf B\}}$ if and only if it is finitely generated over $V_{\bf B}$. Finally, if $(\ov a, \ov c)\in \ku O(\ov a, \ov b)$ and ${\bf B}$ and ${\bf C}$ are the substructures of ${\bf K}$ generated by $(\ov a, \ov b)$ and $(\ov a, \ov c)$ respectively, then, for every automorphism $g\in {\rm Aut}(\bf K)$ mapping ${\bf B}$ to ${\bf C}$, there is an $h\in V_{\{\bf B\}}$ so that $gh(\ov a,\ov b)=(\ov a, \ov c)$.
Using these observations, one may easily modify the proof of  Theorem \ref{quasiisometry} to obtain the following variation.

\begin{thm}\label{quasiisometry locally finite}
Suppose  $\ku K$ is a Fra\"iss\'e class of finite structures with Fra\"iss\'e limit $\bf K$. Then ${\rm Aut}({\bf K})$ admits a maximal compatible left-invariant metric if and only if there is ${\bf A}\in\ku K$ satisfying the following two conditions.
\begin{enumerate}
\item For every  ${\bf B}\in \ku K$ containing ${\bf A}$, there are $n\geqslant 1$ and an isomorphism invariant family  $\ku S\subseteq \ku K$, containing only finitely many isomorphism types, so that, for all ${\bf C}\in \ku K$ and  embeddings $\eta_1,\eta_2\colon {\bf B}\inj {\bf C}$ with $\eta_1|_{\bf A}=\eta_2|_{\bf A}$, one can find some ${\bf D}\in \ku K$ containing ${\bf C}$ and a path ${\bf B}_0=\eta_1{\bf B}, {\bf B}_1, \ldots, {\bf B}_n=\eta_2{\bf B}$ of isomorphic copies of  ${\bf B}$ inside ${\bf D}$ with $\langle {\bf B}_i\cup {\bf B}_{i+1}\rangle\in \ku S$ for all $i$,
\item there is  an isomorphism invariant family $\ku R\subseteq \ku K$, containing only finitely many isomorphism types, so that, for all ${\bf B}\in \ku K$ containing ${\bf A}$ and isomorphic copies ${\bf A}'\subseteq \bf B$ of ${\bf A}$, there is some $\bf C\in \ku K$ containing $\bf B$ and a path ${\bf A}_0, {\bf A}_1, \ldots, {\bf A}_n\subseteq {\bf C}$ consisting of isomorphic copies of ${\bf A}$, beginning at ${\bf A}_0={\bf A}$ and ending at ${\bf A}_n={\bf A}'$, satisfying $\langle {\bf A}_i\cup {\bf A}_{i+1}\rangle\in \ku R$ for all $i$.
\end{enumerate}
\end{thm}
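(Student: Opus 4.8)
The plan is to deduce this directly from Theorem \ref{quasiisometry} by transporting each of its two orbital-type conditions across the dictionary, set up in the discussion preceding the statement, between finite tuples in ${\bf K}$ and finite substructures in $\ku K$. Concretely, I would fix a finite tuple $\ov a$ enumerating and generating a copy of ${\bf A}$, so that $V_{\ov a}=V_{\bf A}$, and invoke the opening observations: first, that $V_{\bf A}$ has property (OB) relative to ${\rm Aut}({\bf K})$ if and only if the setwise stabiliser $V_{\{\bf A\}}$ does, and that $V_{\bf A}$ is finitely generated over $V_{\{\bf B\}}$ exactly when it is over $V_{\bf B}$; and second, that orbital types $\OO(\ov a)$ in ${\bf K}$ correspond to isomorphism types of the expanded structures $\langle {\bf A},\ov a\rangle$, so that an orbital type $\OO(\ov d_i,\ov d_{i+1})$ of a pair is recorded, up to finite ambiguity, by the isomorphism type of the structure $\langle {\bf B}_i\cup {\bf B}_{i+1}\rangle$ generated by the two marked copies.

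With this dictionary in hand, I would check that condition (1) of Theorem \ref{quasiisometry} — that $V_{\ov a}$ has property (OB) — matches condition (1) here term by term. The hypothesis $(\ov a,\ov c)\in\OO(\ov a,\ov b)$ says precisely that $\ov c=g\ov b$ for some $g\in V_{\ov a}$, which is exactly the data of two embeddings $\eta_1,\eta_2\colon {\bf B}\inj {\bf C}$ of ${\bf B}\supseteq{\bf A}$ into the finite substructure ${\bf C}$ generated by the union, agreeing on ${\bf A}$. The path $\ov d_0=(\ov a,\ov b),\ldots,\ov d_n=(\ov a,\ov c)$ inside $\OO(\ov a,\ov b)$ then becomes a path ${\bf B}_0=\eta_1{\bf B},\ldots,{\bf B}_n=\eta_2{\bf B}$ of isomorphic copies of ${\bf B}$ inside some ${\bf D}\supseteq{\bf C}$, and the finite family $\ku S$ of orbital types of consecutive pairs becomes the finite isomorphism-invariant family recording the types $\langle {\bf B}_i\cup{\bf B}_{i+1}\rangle$.

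Condition (2) I would translate in the same spirit, starting from the fact that it expresses finite generation of ${\rm Aut}({\bf K})$ over $V_{\ov a}$, equivalently connectivity of $\mathbb X_{\ov a,\ku R}$ for some finite $\ku R$. Every $\ov b\in\OO(\ov a)$ corresponds to a copy ${\bf A}'$ of ${\bf A}$ in ${\bf K}$, and ${\bf A}$ together with ${\bf A}'$ generate a finite structure ${\bf B}\in\ku K$; the path $\ov d_0=\ov a,\ldots,\ov d_m=\ov b$ in $\OO(\ov a)$ becomes a path ${\bf A}_0={\bf A},\ldots,{\bf A}_n={\bf A}'$ of copies of ${\bf A}$ inside some finite ${\bf C}\supseteq{\bf B}$, and again the finite family $\ku R$ of orbital types is replaced by the finite isomorphism-invariant family of the types $\langle{\bf A}_i\cup{\bf A}_{i+1}\rangle$. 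The localisation to a ${\bf B}\in\ku K$ containing both ${\bf A}$ and ${\bf A}'$ is just the finitary way of naming two copies of ${\bf A}$ inside the limit.

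The step I expect to require the most care is the passage, in both directions, between orbital types of marked pairs and abstract isomorphism types of the generated structure, since forgetting the marking a priori loses information. The point I would stress is that finiteness is preserved both ways: a finite family of orbital types of pairs obviously projects onto a finite family of isomorphism types, while conversely a fixed finite structure contains only finitely many substructures isomorphic to a given ${\bf B}$ (or ${\bf A}$), so each isomorphism type in $\ku S$ (respectively $\ku R$) unfolds into only finitely many orbital types of pairs. Combined with the third opening observation — that any automorphism carrying one marked copy onto another can be corrected by an element of the relevant setwise stabiliser — this lets me move freely between $V_{\ov a},V_{(\ov a,\ov b)}$ and $V_{\{\bf A\}},V_{\{\bf B\}}$ and close the equivalence, so that the two displayed conditions are literal reformulations of conditions (1) and (2) of Theorem \ref{quasiisometry}.
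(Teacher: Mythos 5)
Your proposal follows essentially the same route as the paper, which itself only sketches the argument: it reduces the statement to Theorem \ref{quasiisometry} via the dictionary between orbital types and isomorphism types of expanded structures $\langle{\bf B},\ov b\rangle$, together with the three preceding observations on pointwise versus setwise stabilisers. You correctly isolate the one point needing care --- that finiteness of families survives the passage between marked pairs and unmarked generated structures because a finite structure admits only finitely many markings --- so the proposal is sound and matches the intended proof.
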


\section{Orbital independence relations}
The formulation of Theorem \ref{quasiisometry} is rather abstract and it is therefore useful to have some more familiar criteria for having the local property (OB) or having a well-defined quasi-isometry type. The first such criterion is simply a reformulation of an observation of P. Cameron.

\begin{prop}[P. Cameron]
Let $\bf M$ be an $\aleph_0$-categorical countable structure. Then, for every tuple $\ov a$ in $\bf M$, there is a finite set $F\subseteq {\rm Aut}(\bf M)$ so that ${\rm Aut}({\bf M})= V_{\ov a}FV_{\ov a}$. In particular, ${\rm Aut}(\bf M)$ has property (OB).
\end{prop}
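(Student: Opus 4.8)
The plan is to deduce everything from the Ryll--Nardzewski theorem, which asserts that a countable structure $\bf M$ is $\aleph_0$-categorical if and only if ${\rm Aut}(\bf M)$ is \emph{oligomorphic}: for every $n$ the action on $n$-tuples has only finitely many orbits, equivalently there are only finitely many orbital types of $n$-tuples. Write $G={\rm Aut}(\bf M)$ and let $k$ be the length of $\ov a$. The heart of the argument is a dictionary between the double cosets in $V_{\ov a}\backslash G/V_{\ov a}$ and the orbital types $\OO(\ov a, g\ov a)$ of the $2k$-tuples $(\ov a, g\ov a)$, $g\in G$.

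First I would verify this dictionary: for $g,h\in G$ one has $V_{\ov a}gV_{\ov a}=V_{\ov a}hV_{\ov a}$ precisely when $\OO(\ov a, g\ov a)=\OO(\ov a, h\ov a)$. For the forward implication, writing $g=uhv$ with $u,v\in V_{\ov a}$ and using $u\ov a=v\ov a=\ov a$ gives $(\ov a, g\ov a)=u\cdot(\ov a, h\ov a)$, so the orbital types coincide. For the converse, if $f\in G$ satisfies $f\cdot(\ov a, h\ov a)=(\ov a, g\ov a)$, then $f\ov a=\ov a$ and $g\inv fh\,\ov a=\ov a$, i.e.\ $f\in V_{\ov a}$ and $g\inv fh\in V_{\ov a}$; hence $g=f\cdot h\cdot (g\inv fh)\inv\in V_{\ov a}hV_{\ov a}$.

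Since the orbital types $\OO(\ov a, g\ov a)$ form a subfamily of the finitely many orbital types of $2k$-tuples supplied by oligomorphicity, there are only finitely many double cosets $V_{\ov a}\backslash G/V_{\ov a}$. Choosing one representative from each and adjoining $1$ then yields a finite set $F\subseteq G$ with $G=V_{\ov a}FV_{\ov a}$, as claimed.

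For the final assertion, I would invoke the criterion recalled in the introduction: $G$ has property (OB) if and only if for every neighbourhood $V\ni1$ there are a finite $F\subseteq G$ and an integer $m\geqslant1$ with $G=(FV)^m$. Given such a $V$, choose a basic subgroup $V_{\ov b}\subseteq V$, apply the previous paragraph to $\ov b$ to obtain a finite $F$ with $G=V_{\ov b}FV_{\ov b}$, and set $F'=F\cup\{1\}$; then every $ufv$ with $u,v\in V_{\ov b}$, $f\in F$ satisfies $u\in F'V_{\ov b}$ and $fv\in F'V_{\ov b}$, so $V_{\ov b}FV_{\ov b}\subseteq(F'V_{\ov b})^2\subseteq(F'V)^2$, whence $G=(F'V)^2$. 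I do not anticipate a genuine obstacle here: the only external input is the Ryll--Nardzewski theorem, and the real content is the orbital-type/double-coset dictionary, an elementary but decisive bookkeeping step.
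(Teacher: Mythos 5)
Your proposal is correct and follows essentially the same route as the paper: the paper likewise derives the finiteness of the double cosets $V_{\ov a}\backslash {\rm Aut}({\bf M})/V_{\ov a}$ from oligomorphicity, phrased as $V_{\ov a}$ having finitely many orbits on ${\bf M}^k$ with chosen orbit representatives, which is exactly your orbital-type/double-coset dictionary in different notation. The only cosmetic difference is that you spell out the final (OB) verification, which the paper leaves implicit.
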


\begin{proof}
Since $\bf M$ is $\aleph_0$-categorical, the pointwise stabiliser $V_{\ov a}$ induces only finitely many orbits on ${\bf M}^n$, where $n$ is the length of $\ov a$. So let $B\subseteq {\bf M}^n$ be a finite set of $V_{\ov a}$-orbit representatives. Also, for every $\ov b\in B$, pick if possible some $f\in {\rm Aut}(\bf M)$ so that $\ov b=f\ov a$ and let $F$ be the finite set of these $f$. Then, if $g\in {\rm Aut}(\bf M)$, as $g\ov a\in {\bf M}^n=V_{\ov a}B$, there is some $h\in V_{\ov a}$ and $\ov b\in B$ so that $g\ov a=h\ov b$. In particular, there is $f\in F$ so that $\ov b=f\ov a$, whence $g\ov a=h\ov b=hf\ov a$ and thus $g\in hfV_{\ov a}\subseteq V_{\ov a}FV_{\ov a}$.
\end{proof}

Thus, for an automorphism group ${\rm Aut}(\bf M)$ to have a non-trivial quasi-isometry type, the structure ${\bf M}$ should not be  $\aleph_0$-categorical. In this connection, we recall that if $\ku K$ is a Fra\"iss\'e class  in a finite language $\ku L$ and $\ku K$ is {\em uniformly locally finite}, that is, there is a function $f\colon \N\til \N$ so that every ${\bf A}\in \ku K$ generated by $n$ elements has size $\leqslant f(n)$, then the Fra\"iss\'e limit $\bf K$ is $\aleph_0$-categorical. In particular, this applies to Fra\"iss\'e classes of finite relational languages.

However, our first concern is to identify automorphism groups with the local property (OB) and, for this, we consider model theoretical independence relations.

\begin{defi}\label{indep rel}
Let $\bf M$ be a countable structure and $A\subseteq \bf M$ a finite subset. An {\em orbital $A$-independence relation} on $\bf M$ is a binary relation $\forkindep[A]$ defined between finite subsets of $\bf M$ so that, for all finite $B,C,D\subseteq \bf M$, 
\begin{itemize}
\item[(i)] (symmetry) $B\forkindep[A]C\equi C\forkindep[A]B$,
\item[(ii)] (monotonicity) $B\forkindep[A]C \;\;\&\;\; D\subseteq C\;\;\saa\;\; B\forkindep[A]D$, 
\item[(iii)] (existence) there is $f\in V_{A}$ so that ${fB}\forkindep[A]C$,
\item[(iv)] (stationarity) if $B\forkindep[A]C$ and $g\in V_A$ satisfies  $gB\forkindep[A]C$, then $g\in V_CV_B$, i.e., there is some $f\in V_C$ agreeing pointwise with $g$ on $B$.
\end{itemize}
\end{defi}
We read $B\forkindep[A]C$ as ``$B$ is independent from $C$ over $A$.'' Occasionally, it is convenient to let $\forkindep[A]$ be defined between finite tuples rather than sets, which is done by simply viewing a tuple as a name for the set it enumerates. For example, if $\ov b=(b_1,\ldots, b_n)$, we let $\ov b\forkindep[A]C$ if and only if $\{b_1,\ldots, b_n\}\forkindep[A]C$.

With this convention, the stationarity condition on $\forkindep[\ov a]$ can be reformulated as follows: If $\ov b$ and $\ov b'$ have the same orbital type over $\ov a$, i.e., $\OO(\ov b,\ov a)=\OO(\ov b', \ov a)$,  and are both independent from $\ov c$ over $\ov a$, then they also have the same orbital type over $\ov c$. 

Similarly, the existence condition on $\forkindep[\ov a]$ can be stated as: For all $\ov b, \ov c$, there is some $\ov b'$ independent from $\ov c$ over $\ov a$ and having the same orbital type over $\ov a$ as $\ov b$ does.

We should note that, as our interest is in the permutation group ${\rm Aut}(\bf M)$ and not the particular structure $\bf M$, any two structures $\bf M$ and ${\bf M}'$, possibly of different languages, having the same universe and the exact same automorphism group ${\rm Aut}({\bf M})={\rm Aut}({\bf M}')$ will essentially be equivalent for our purposes. We also remark that the existence of an orbital $A$-independence relation does not depend on the exact structure $\bf M$, but only on its universe and its automorphism group. Thus, in Examples \ref{measured boolean}, \ref{tree} and \ref{unitary}  below, any manner of formalising the mathematical structures  as bona fide first-order model theoretical structures of some language with the indicated automorphism group will lead to the same results and hence can safely be left to the reader.

\begin{exa}[Measured Boolean algebras]\label{measured boolean}
Let $\bf M$ denote the Boolean algebra of clopen subsets of Cantor space $\{0,1\}^\N$ equipped with the usual dyadic probability measure $\mu$, i.e., the infinite product of the $\{\frac 12, \frac 12\}$-distribution on $\{0,1\}$. We note that $\bf M$ is ultrahomogeneous, in the sense that, if $\sigma\colon A\til B$ is a measure preserving isomorphism between two subalgebras of $\bf M$, then $\sigma$ extends to a measure preserving automorphism of $\bf M$.   

For two finite subsets $A,B$, we let $A\forkindep[\tom]B$ if the Boolean algebras they generate are measure theoretically independent, i.e., if, for all $a_1,\ldots, a_n\in A$ and $b_1,\ldots, b_m\in B$, we have 
$$
\mu(a_1\cap \ldots\cap  a_n \cap b_1\cap \ldots\cap b_m)=\mu(a_1\cap \ldots\cap  a_n)\cdot \mu(b_1\cap \ldots\cap b_m).
$$ 
Remark that, if $\sigma\colon A_1\til A_2$ and $\eta\colon B_1\til B_2$ are measure preserving isomorphisms between subalgebras of $\bf M$ with $A_i\forkindep[\tom]B_i$, then there is a measure preserving isomorphism $\xi\colon \langle A_1\cup B_1\rangle\til    \langle A_2\cup B_2\rangle$ between the algebras generated extending both $\sigma$ and $\eta$. Namely, $\xi(a\cap b)=\sigma(a)\cap \eta(b)$ for atoms $a\in A$ and $b\in B$.

Using this and the ultrahomogeneity of $\bf M$, the stationarity condition (iv) of $\forkindep[\tom]$ is clear. Also,  symmetry and monotonicity are obvious. Finally, for the existence  condition (iii), suppose that $A$ and $B$ are given finite subsets of $\bf M$. Then there is some finite $n$ so that all elements of $A$ and $B$ can be written as unions of basic open sets $N_s=\{x\in \{0,1\}^\N\del s\text{ is an initial segment of }x\}$ for $s\in 2^n$.  Pick  a permutation  $\alpha$ of $\N$ so that $\alpha(i)>n$ for all $i\leqslant n$ and note that $\alpha$ induces  measure preserving automorphism $\sigma$ of $\bf M$ so that $\sigma(A)\forkindep[\tom]B$.

Thus, $\forkindep[\tom]$ is an orbital $\tom$-independence relation on $\bf M$. We also note that, by Stone duality, the automorphism group of $\bf M$ is isomorphic to the group ${\rm Homeo}(\{0,1\}^\N, \mu)$ of measure-preserving homeomorphisms of Cantor space.
\end{exa}

\begin{exa}[The ended $\aleph_0$-regular tree]\label{tree}
Let $\bf T$ denote the $\aleph_0$-regular tree. I.e., $\bf T$ is a countable connected undirected graph without loops in which every vertex has infinite valence. 
Since $\bf T$ is a tree, there is a natural notion of {\em convex hull}, namely, for a subset $A\subseteq \bf T$ and a vertex $x\in \bf T$, we set $x\in {\rm conv}(A)$ if there are $a,b\in A$ so that $x$ lies on the unique path from $a$ to $b$. Now, pick a distinguished vertex $t\in \bf T$ and, for finite $A,B\subseteq \bf T$, set 
$$
A\forkindep[{\{t\}}]B\;\equi\; {\rm conv}(A\cup\{t\})\cap {\rm conv}(B\cup\{t\})=\{t\}.
$$

That $\forkindep_{\{t\}}$ is both symmetric and monotone is obvious. Also, if $A$ and $B$ are finite, then so are ${\rm conv}(A\cup\{t\})$ and ${\rm conv}(B\cup\{t\})$ and so it is easy to find a ellitic isometry $g$ with fixed point $t$, i.e., a rotation of $\bf T$ around $t$, so that $g\big({\rm conv}(A\cup\{t\})\big)\cap {\rm conv}(B\cup\{t\})=\{t\}$. Since $g\big({\rm conv}(A\cup\{t\})\big)={\rm conv}(gA\cup\{t\})$, one sees that $gA\forkindep[\{t\}]B$, verifying the existence condition (iii). 

Finally, for the stationarity condition (iv), suppose $B,C\subseteq \bf T$ are given and $g$ is an elliptic isometry fixing $t$ so that $B\forkindep[{\{t\}}]C$ and $gB\forkindep[{\{t\}}]C$. Then, using again that $\bf T$ is $\aleph_0$-regular, it is easy to find another elliptic isometry fixing all of ${\rm conv}(C\cup \{t\})$ that agrees with $g$ on $B$.

So $\forkindep[\{t\}]$ is an orbital $\{t\}$-independence relation on $\bf T$.
\end{exa}

\begin{exa}[Unitary groups]\label{unitary}
Fix a countable field $\Q\subseteq {\mathfrak F}\subseteq \C$ closed under complex conjugation and square roots and let $\bf V$ denote the countable dimensional $ {\mathfrak F}$-vector space with basis $(e_i)_{i=1}^\infty$. We define the usual inner product on $\bf V$ by letting
$$
\Big\langle \sum_{i=1}^na_ie_i\Del \sum_{j=1}^mb_je_j\Big\rangle=\sum_i a_i\ov{b_i}
$$
and let $\ku U(\bf V)$ denote the corresponding unitary group, i.e., the group of all invertible linear transformations of $\bf V$ preserving $\langle\cdot\del \cdot \rangle$. 

For finite subsets $A,B\subseteq \bf V$, we let 
$$
A\forkindep[\tom]B\;\equi\; {\rm span}(A)\perp {\rm span}(B).
$$
I.e., $A$ and $B$ are independent whenever they span orthogonal subspaces. Symmetry and monotonicity is clear. Moreover, since we chose our field $\mathfrak F$ to be closed under complex conjugaction and square roots, the inner product of two vectors lies in $\mathfrak F$ and hence so does the norm $\norm{v}=\sqrt{\langle v\del v\rangle}$ of any vector. It follows that the Gram--Schmidt orthonormalisation procedure can be performed within $\bf  V$ and hence every orthonormal set may be extended to an orthonormal basis for $\bf V$. Using this, one may immitate the details of Example \ref{measured boolean} to show that $\forkindep[\tom]$ satisfies conditions (iii) and (iv). (See also Section 6 of \cite{OB} for additional details.)
\end{exa}

\begin{thm}\label{indep OB}
Suppose $\bf M$ is a countable structure, $A\subseteq \bf M$  a finite subset and $\forkindep[A]$ an orbital $A$-independence relation. Then the pointwise stabiliser subgroup $V_A$ has property (OB). Thus, if $A=\tom$, the automorphism group ${\rm Aut}(\bf M)$ has  property (OB) and, if $A\neq \tom$, ${\rm Aut}(\bf M)$ has the local property (OB). 
\end{thm}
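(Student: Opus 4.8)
The plan is to enumerate $A$ by a tuple $\ov a$, so that $V_A=V_{\ov a}$, and to verify condition (2) of Lemma~\ref{condition for OB}. Replacing $\ov b$ by $(\ov a,\ov b)$ if necessary, I may assume throughout that $A$ is contained in the underlying set of every tuple $\ov b$ under consideration; this normalisation guarantees that any automorphism fixing such a set pointwise automatically fixes $\ov a$. Writing $B,C,U,V_i$ for the underlying sets of tuples $\ov b,\ov c,\ov u,\ov v_i$, I would first isolate the following key fact, which is where the independence relation does all the work: if the set $U$ contains $A$, and $\ov v_1,\ov v_2$ satisfy $\OO(\ov a,\ov v_1)=\OO(\ov a,\ov v_2)$ together with $V_1\forkindep[A]U$ and $V_2\forkindep[A]U$, then $\OO(\ov a,\ov u,\ov v_1)=\OO(\ov a,\ov u,\ov v_2)$. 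Informally: once a tuple has been made independent from $\ov u$ over $A$, its orbital type over $\ov a$ completely determines its orbital type over $\ov u$.

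To prove this key fact I would choose $g\in V_{\ov a}$ with $g\ov v_1=\ov v_2$, which is possible because $\ov v_1$ and $\ov v_2$ share an orbital type over $\ov a$, and then feed the data $V_1\forkindep[A]U$, $g\in V_A$, and $gV_1=V_2\forkindep[A]U$ into the stationarity axiom (iv), with $V_1$ in the role of ``$B$'' and $U$ in the role of ``$C$''. Stationarity produces $f\in V_U$ agreeing with $g$ on $V_1$; since $A\subseteq U$, this $f$ fixes both $\ov a$ and $\ov u$ while sending $\ov v_1$ to $\ov v_2$, which witnesses $\OO(\ov a,\ov u,\ov v_1)=\OO(\ov a,\ov u,\ov v_2)$. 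The one delicate point is checking that the automorphism delivered by stationarity fixes the base tuple $\ov a$, and this is exactly what the normalisation $A\subseteq U$ secures; this is the main obstacle.

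With the key fact in hand the path is short and yields the uniform bound $n=2$. Given $\ov c$ with $(\ov a,\ov c)\in\OO(\ov a,\ov b)$, the existence axiom (iii) gives $f\in V_A$ with $fB\forkindep[A](B\cup C)$; setting $\ov b'=f\ov b$ and invoking monotonicity (ii) and symmetry (i) yields $B\forkindep[A]B'$ and $C\forkindep[A]B'$, while $f\in V_A$ gives $\OO(\ov a,\ov b')=\OO(\ov a,\ov b)=\OO(\ov a,\ov c)$. Applying the key fact with base $\ov b$ shows that $\OO(\ov a,\ov b,\ov b')$ is the same for every such independent copy $\ov b'$ (any two copies share an orbital type over $\ov a$), so it depends on $\ov b$ alone; applying it with base $\ov b'$ to the pair $\ov b,\ov c$ gives $\OO(\ov a,\ov b',\ov b)=\OO(\ov a,\ov b',\ov c)$. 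Hence along the path $(\ov a,\ov b),(\ov a,\ov b'),(\ov a,\ov c)$ the two consecutive orbital types take at most the two values recorded by $\OO(\ov a,\ov b,\ov b')$ and $\OO(\ov a,\ov b',\ov b)$, both determined by $\ov b$ and independent of $\ov c$. Taking $\ku S$ to be this two-element family and $n=2$ verifies condition (2) of Lemma~\ref{condition for OB}, so $V_A=V_{\ov a}$ has property (OB) relative to ${\rm Aut}(\bf M)$.

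The two consequences are then immediate. When $A=\tom$ the stabiliser $V_A$ is all of ${\rm Aut}(\bf M)$, so the group itself has property (OB); when $A\neq\tom$ the subgroup $V_A$ is an open neighbourhood of the identity with property (OB) relative to ${\rm Aut}(\bf M)$, which is precisely the local property (OB). I expect the bookkeeping in the key fact --- matching the hypotheses to the roles of $B$ and $C$ in stationarity and confirming that $\ov a$ is fixed --- to be the only real difficulty, after which the length-two path construction is essentially forced.
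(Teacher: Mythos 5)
Your proof is correct and is essentially the paper's own argument recast in the orbital-type language of Lemma \ref{condition for OB}: your ``key fact'' is the stationarity axiom (iv) restated, your independent copy $\ov b'$ with $B'\forkindep[A](B\cup C)$ plays exactly the role of the element $h$ with $hB\forkindep[A](B\cup gB)$ in the paper's proof, and your length-two path with $\ku S=\{\ku O(\ov a,\ov b,\ov b'),\ku O(\ov a,\ov b',\ov b)\}$ translates, via Lemmas \ref{S til F} and \ref{F til S}, into the paper's conclusion $V_A\subseteq V_BFV_BFV_B$ with $F=\{f,f\inv\}$. The two delicate points --- the normalisation $A\subseteq B$ so that stationarity yields automorphisms fixing $\ov a$, and the uniformity of the two orbital types over the choice of independent copy --- are both handled correctly, so there is no gap.
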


\begin{proof}Suppose $U$ is an open neighbourhood of $1$ in $V_A$. We will find a finite subset $F\subseteq V_A$ so that $V_A=UFUFU$. By passing to a further subset, we may suppose that $U$ is of the form $V_B$, where $B\subseteq \bf M$ is a finite set containing $A$. We begin by choosing, using property (iii) of the orbital $A$-independence relation, some $f\in V_{A}$ so that ${fB}\forkindep[A]B$ and set $F=\{f,f\inv \}$.

Now, suppose that $g\in V_A$ is given and choose again by (iii) some $h\in V_{A}$ so that ${hB}\forkindep[A]{(B\cup gB)}$. By (ii), it follows that ${hB}\forkindep[A]{B}$ and ${hB}\forkindep[A]{gB}$, whereby, using (i), we have ${B}\forkindep[A]{hB}$ and ${gB}\forkindep[A]{hB}$. Since $g\in V_A$, we can apply (iv) to $C=hB$, whence $g\in V_{hB}V_B=hV_Bh\inv V_B$. 

However, as ${fB}\forkindep[A]B$ and ${hB}\forkindep[A]{B}$, i.e., ${(hf\inv \cdot fB)}\forkindep[A]{B}$, and also $hf\inv \in V_{A}$, by (iv) it follows that $hf\inv \in V_BV_{fB}=V_BfV_Bf\inv$. So, finally, $h \in V_BfV_B$ and 
$$
g\in hV_Bh\inv V_B\subseteq V_BfV_B\cdot V_B\cdot (V_BfV_B)\inv \cdot V_B  \subseteq   V_BFV_BFV_B
$$ 
as required.
\end{proof}
By the preceding examples, we see that both the automorphism group of the measured Boolean algebra and the unitary group $\ku (\bf V)$ have property (OB), while the automorphism group ${\rm Aut}(\bf T)$ has the local property (OB)  (cf. Theorem 6.20 \cite{turbulence}, Theorem 6.11 \cite{OB}, respectively Theorem 6.31 \cite{turbulence}).

We note that, if $\bf M$ is an $\omega$-homogeneous structure, $\ov a$, $\ov b$ are tuples in $\bf M$ and $A\subseteq {\bf M}$ is a finite subset, then, by definition, ${\rm tp}^{\bf M}(\ov a/A)={\rm tp}^{\bf M}(\ov b/A)$ if and only if $\ov b\in V_A\cdot \ov a$. 
In this case, we can reformulate conditions (iii) and (iv) of the definition of orbital $A$-independence relations as follows.
\begin{itemize}
\item[(iii)] For all $\ov a$ and $B$, there is $\ov b$ with ${\rm tp}^{\bf M}(\ov b/A)={\rm tp}^{\bf M}(\ov a/A)$ and $\ov b\forkindep[A]B$.
\item[(iv)] For all $\ov a, \ov b$ and $B$, if $\ov a\forkindep[A]B$, $\ov b\forkindep[A]B$ and  ${\rm tp}^{\bf M}(\ov a/A)={\rm tp}^{\bf M}(\ov b/A)$, then ${\rm tp}^{\bf M}(\ov a/B)={\rm tp}^{\bf M}(\ov b/B)$.
\end{itemize}

Also, for the next result, we remark that, if $T$ is a complete theory with infinite models in a countable language $\ku L$, then $T$ has a countable saturated model if and only if $S_n(T)$ is countable for all $n$. In particular, this holds if $T$ is $\omega$-stable.

\begin{thm}\label{stability}
Suppose that $\bf M$ is a saturated countable model of an $\om$-stable theory. Then ${\rm Aut}(\bf M)$ has property (OB).
\end{thm}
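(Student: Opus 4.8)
The plan is to establish property (OB) for $\mathrm{Aut}(\bf M)$ by exhibiting, for each finite tuple $\ov a$, a uniform bound on how pointwise stabilisers $V_{\ov a}$ sit inside products of smaller stabilisers, and the natural route is to build an orbital independence relation over the empty set from the $\omega$-stable forking calculus and then invoke Theorem \ref{indep OB}. The obstruction, already flagged in the text, is that forking independence in a general stable theory need \emph{not} be stationary over $\tom$: nonforking extensions of a type over the empty set need not be unique, so condition (iv) of Definition \ref{indep rel} can fail outright. Hence I do not expect a literal orbital $\tom$-independence relation to exist, and the proof must extract property (OB) from a weaker independence datum.

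First I would recall the relevant consequences of $\om$-stability in the saturated countable model $\bf M$: nonforking independence $\forkindep$ is symmetric and monotone, satisfies existence (every type has a nonforking extension to any set), and --- crucially for a \emph{saturated} model --- has the \emph{finite multiplicity} property, namely a type over $\tom$ has only finitely many nonforking extensions to any finite set, these being exactly its conjugates under the stabiliser. Concretely, if $B\forkindep[\tom] C$ and $g\in\mathrm{Aut}(\bf M)$ satisfies $gB\forkindep[\tom] C$ with $gB,B$ realising the same type over $\tom$, then $\mathrm{tp}(gB/C)$ ranges over a \emph{finite} set of possibilities as $g$ varies. This finite multiplicity is the quantitative replacement for stationarity, and I would phrase it as: there is a finite set $G_0\subseteq V_\tom$ such that whenever $B\forkindep[\tom]C$ and $gB\forkindep[\tom]C$ for $g\in V_\tom$, one has $g\in V_C\, G_0\, V_B$.

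Next I would run the amalgamation-of-conjugates argument of Theorem \ref{indep OB} with this weakened stationarity. Fixing a basic open $V_B$ (with $B$ finite), I choose by existence an $f\in\mathrm{Aut}(\bf M)$ with $fB\forkindep[\tom]B$ and set $F_0=\{f,f\inv\}$. Given arbitrary $g$, I pick $h$ with $hB\forkindep[\tom](B\cup gB)$, so by monotonicity and symmetry $B\forkindep[\tom]hB$ and $gB\forkindep[\tom]hB$; applying the finite-multiplicity version of (iv) with $C=hB$ yields $g\in V_{hB}\,G_0\,V_B$, and a second application controls $h$ modulo $V_B f V_B$ and the finite set $G_0$. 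Collecting the finitely many correcting elements from $F_0$, $G_0$ and their conjugation bookkeeping into a single finite set $F$, I obtain $V_B\subseteq (V_B F V_B)^k$ for a fixed $k$, which is exactly property (OB) relative to $\mathrm{Aut}(\bf M)$; since this holds for every basic $V_B$ and $V_\tom=\mathrm{Aut}(\bf M)$, the whole group has property (OB).

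The hard part will be pinning down the finite-multiplicity statement in a form clean enough to feed into the product-of-cosets computation: I must verify that the multiplicity bound is uniform over the relevant $C=hB$ (not merely finite for each fixed $C$) and that the finitely many conjugacy representatives can be chosen inside $\mathrm{Aut}(\bf M)$ rather than in some elementary extension, which is where saturation of $\bf M$ is essential --- it guarantees every nonforking extension is realised and every type-automorphism is implemented by an actual element of $\mathrm{Aut}(\bf M)$. If uniformity of the multiplicity bound proves delicate, the fallback is to absorb the bounded ambiguity into the neighbourhood $V_B$ by passing to a slightly larger finite set $B'\supseteq B$ that \emph{pins down} the strong type (eliminating the finite group of conjugates), thereby recovering genuine stationarity over $B'$ and reducing exactly to the situation of Theorem \ref{indep OB} relativised to $B'$.
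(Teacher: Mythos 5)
Your proposal follows essentially the same route as the paper's proof: forking independence from $\om$-stability supplies symmetry, monotonicity and (via saturation) existence, the failure of stationarity over $\tom$ is compensated by the finite multiplicity of non-forking extensions --- bounded by the Morley degree of ${\rm tp}^{\bf M}(\ov a)$ for $\ov a$ enumerating $B$ --- and the amalgamation-of-conjugates computation yields ${\rm Aut}({\bf M})=V_BFV_BFV_B$. The uniformity issue you flag as the hard part is resolved in the paper not by seeking a single $G_0$ working over every $C=hB$, but by applying $h\inv$ and invoking automorphism-invariance of forking to reduce to independence over the \emph{fixed} base $B$, where the finitely many non-forking extensions of ${\rm tp}^{\bf M}(\ov a)$ in $S_n(B)$ are realised in $\bf M$ by saturation and witnessed by one finite set $F$ chosen once and for all.
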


\begin{proof}
We note first that, since $\bf M$ is saturated and countable, it is $\om$-homogeneous. Now, since $\bf M$ is the model of an $\om$-stable theory, there is a corresponding notion of {\em forking independence} $\ov a\forkindep[A]B$ defined by
\[\begin{split}
\ov a\forkindep[A]B 
&\equi {\rm tp}^{\bf M}(\ov a/A\cup B) \text{ is a non-forking extension of }{\rm tp}^{\bf M}(\ov a/A)\\
&\equi {\rm RM}(\ov a/A\cup B)={\rm RM}(\ov a/A),
\end{split}\]
where $RM$ denotes the {\em Morley rank}.  In this case, forking independence $\forkindep[A]$ always satisfies symmetry and monotonicity, i.e., conditions (i) and (ii), for all finite $A\subseteq \bf M$. 

Moreover, by the existence of non-forking extensions,  every type ${\rm tp}^{\bf M}(\ov a/A)$ has a non-forking extension $q\in S_n(A\cup B)$. Also, as $\bf M$ is saturated, this extension $q$ is realised by some tuple $\ov b$ in $\bf M$, i.e., ${\rm tp}^{\bf M}(\ov b/A\cup B)=q$. Thus, ${\rm tp}^{\bf M}(\ov b/A\cup B)$ is a non-forking extension of  ${\rm tp}^{\bf M}(\ov a/A)={\rm tp}^{\bf M}(\ov b/A)$, which implies that ${\ov b}\forkindep[A]B$. 
In other words, for all for all $\ov a$ and $A,B$, there is $\ov b$ with ${\rm tp}^{\bf M}(\ov b/A)={\rm tp}^{\bf M}(\ov a/A)$ and $\ov b\forkindep[A]B$, which verifies the existence condition (iii) for $\forkindep[A]$.

However, forking independence over $A$, $\forkindep[A]$, may not satisfy the stationarity condition (iv) unless every type $S_n(A)$ is stationary, i.e., unless,  for all $B\supseteq A$, every type $p\in S_n(A)$ has a unique non-forking extension in $S_n(B)$. Nevertheless, as we shall show, we can get by with sligthly less.

We let $\forkindep[\tom]$ denote forking independence over the empty set. Suppose also that $B\subseteq \bf M$ is a fixed finite subset and let $\ov a\in {\bf M}^n$ be an enumeration of $B$. Then there are at most ${\rm deg}_M\big({\rm tp}^{\bf M}(\ov a)\big)$ non-forking extensions of ${\rm tp}(\ov a)$ in $S_n(B)$, where ${\rm deg}_M\big({\rm tp}^{\bf M}(\ov a)\big)$ denotes the {\em Morley degree} of ${\rm tp}^{\bf M}(\ov a)$. Choose realisations $\ov b_1,\ldots, \ov b_k\in {\bf M}^n$ for each of these non-forking extensions realised in $\bf M$. 
Since ${\rm tp}^{\bf M}(\ov b_i)={\rm tp}^{\bf M}(\ov a)$, there are $f_1,\ldots, f_k\in {\rm Aut}(\bf M)$ so that $\ov b_i=f_i\ov a$. Let $F$ be the set of these $f_i$ and their inverses. 
Thus, if $\ov c\in {\bf M}^n$ satisfies ${\rm tp}^{\bf M}(\ov c)={\rm tp}^{\bf M}(\ov a)$ and $\ov c\forkindep[\tom]B$, then there is some $i$ so that ${\rm tp}^{\bf M}(\ov c/B)={\rm tp}^{\bf M}(\ov b_i/B)$ and so, for some $h\in V_B$, we have $\ov c=h\ov b_i=hf_i\ov a\in V_BF\cdot \ov a$.

Now assume $g\in {\rm Aut}(\bf M)$ is given and pick, by condition (iii), some $h\in {\rm Aut}(\bf M)$ so that $hB\forkindep[\tom](B\cup gB)$. By monotonicity and symmetry, it follows that $B\forkindep[\tom]hB$ and $gB\forkindep[\tom]hB$. Also, since Morley rank  and hence forking independence are invariant under automorphisms of $\bf M$, we see that $h\inv B\forkindep[\tom]B$ and $h\inv gB\forkindep[\tom]B$. So, as $\ov a$ enumerates $B$, we have $h\inv \ov a\forkindep[\tom]B$ and $h\inv g\ov a\forkindep[\tom]B$, where clearly ${\rm tp}^{\bf M}(h\inv \ov a)={\rm tp}^{\bf M}(\ov a)$ and ${\rm tp}^{\bf M}(h\inv g\ov a)={\rm tp}^{\bf M}(\ov a)$. By our observation above, we deduce that $h\inv \ov a \in V_BF\cdot \ov a$ and $h\inv g\ov a \in V_BF\cdot \ov a$, whence $h\inv  \in V_BFV_{\ov a}=V_BFV_B$ and similarly $h\inv g \in V_BFV_B$. Therefore, we finally have that 
$$
g \in (V_BFV_B)\inv V_BFV_B=V_BFV_BFV_B.
$$
We have thus shown that, for all finite $B\subseteq \bf M$, there is a finite subset $F\subseteq {\rm Aut}(\bf M)$ so that ${\rm Aut}({\bf M})=V_BFV_BFV_B$, verifying that ${\rm Aut}(\bf M)$ has property (OB).
\end{proof}

\begin{exa}
Let us note that Theorem \ref{stability} fails without the assumption of $\bf M$ being saturated. To see this, let $T$ be the (complete) theory of $\aleph_0$-regular forests, i.e., of undirected graphs without loops in which every vertex has valence $\aleph_0$, in the language of a single binary edge relation. In all models of $T$, every connected component is then a copy of the $\aleph_0$-regular tree $\bf T$ and hence the number of connected components is a complete isomorphism  invariant for models of $T$. It follows, in particular, that the countable theory $T$ is $\aleph_1$-categorical and thus $\om$-stable (in fact, $\bf T$ is also $\om$-homogeneous). Nevertheless, the automorphism group of the unsaturated structure $\bf T$ fails to have property (OB) as witnessed by its tautological isometric action on $\bf T$.
\end{exa}

\begin{exa}
Suppose $\mathfrak F$ is a countable field and let $\ku L=\{+, -, 0\}\cup \{\lambda_t\del t\in \mathfrak F\}$ be the language of $\mathfrak F$-vector spaces, i.e., $+$ and $-$ are respectively  binary and unary function symbols and $0$ a constant  symbol representing the underlying Abelian group and $\lambda_t$ are unary function symbols representing multiplication by the scalar $t$. Let also $T$ be the theory of infinite $\mathfrak F$-vector spaces.

Since $\mathfrak F$-vector spaces of size $\aleph_1$ have dimension $\aleph_1$, we see that $T$ is $\aleph_1$-categorical and thus complete and $\omega$-stable. Moreover, provided $\mathfrak F$ is infinite, $T$ fails to be $\aleph_0$-categorical, since the formulas $x_1=\lambda_t(x_2)$, $t\in\mathfrak F$, give infinitely many different $2$-types. However, since $T$ is $\omega$-stable, it has a countable saturated model, which is easily seen to be the $\aleph_0$-dimensional $\mathfrak F$-vector space denoted $\bf V$. It thus follows from Theorem \ref{stability} that the general linear group ${\rm GL}(\bf V)={\rm Aut}(\bf V)$ has property (OB).
\end{exa}

Oftentimes, Fra\"iss\'e classes admit a canonical form of amalgamation that can be used to define a corresponding notion of independence. One rendering of this is given by K. Tent and M. Ziegler (Example 2.2 \cite{tent}). However, their notion is too weak to ensure that the corresponding independence notion is an orbital independence relation. For this, one needs a stronger form of  functoriality, which nevertheless is satisfied in most cases.

\begin{defi}\label{funct amal}
Suppose $\ku K$ is a Fra\"iss\'e class and that ${\bf A} \in \ku K$. We say that $\ku K$ admits a {\em functorial amalgamation} over $\bf A$ if there is map $\theta$ that to all pairs of embeddings $\eta_1\colon {\bf A}\inj {\bf B}_1$ and $\eta_2\colon {\bf A}\inj {\bf B}_2$, with ${\bf B}_1, {\bf B}_2\in\ku K$,  associates a pair of embeddings $\zeta_1\colon {\bf B}_1\inj \bf C$ and $\zeta_2\colon {\bf B}_2\inj \bf C$ into another structure ${\bf C}\in \ku K$ so that 
$\zeta_1\circ\eta_1=\zeta_2\circ\eta_2$ and, moreover, satisfying the following conditions.
\begin{enumerate}
\item(symmetry) The pair $\Theta\big(\eta_2\colon {\bf A}\inj {\bf B}_2, \eta_1\colon {\bf A}\inj {\bf B}_1\big)$ is the reverse of the pair $\Theta\big(\eta_1\colon {\bf A}\inj {\bf B}_1, \eta_2\colon {\bf A}\inj {\bf B}_2\big)$.
\item(functoriality) If $\eta_1\colon {\bf A}\inj {\bf B}_1$, $\eta_2\colon {\bf A}\inj {\bf B}_2$, $\eta'_1\colon {\bf A}\inj {\bf B}'_1$ and  $\eta'_2\colon {\bf A}\inj {\bf B}'_2$ are embeddings with ${\bf B}_1,{\bf B}_2, {\bf B}_1',{\bf B}_2'\in \ku K$ and $\iota_1\colon {\bf B}_1\inj {\bf B}_1'$ and $\iota_2\colon {\bf B}_2\inj {\bf B}_2'
$
are embeddings with $\iota_i\circ \eta_i=\eta_i'$, then, for 
$$
\Theta\big(\eta_1\colon {\bf A}\inj {\bf B}_1, \eta_2\colon {\bf A}\inj {\bf B}_2\big)=\big(\zeta_1\colon {\bf B}_1\inj {\bf C}, \zeta_2\colon {\bf B}_2\inj {\bf C}\big)
$$
and
$$
\Theta\big(\eta_1\colon {\bf A}\inj {\bf B}_1, \eta_2\colon {\bf A}\inj {\bf B}_2\big)=\big(\zeta'_1\colon {\bf B}'_1\inj  {\bf C}', \zeta'_2\colon {\bf B}'_2\inj  {\bf C}'\big),
$$
there is an embedding $\sigma\colon {\bf C}\inj{\bf C}'$ so that $\sigma\circ \zeta_i=\zeta_i'\circ\iota_i$ for $i=1,2$. 
\end{enumerate}
\end{defi}

\setlength{\unitlength}{.4cm}
\begin{center}
\begin{picture}(0,18)

\thicklines
\put(0.2,0.2){\vector(3,4){5}}
\put(0.15,0.2){\vector(4,3){8.8}}
\put(0.25,0.25){\vector(-4,3){8.8}}

\put(0.3,0.2){\vector(-3,4){5}}

{\color{roed}          
\put(-4.8,7.2){\vector(1,1){4.7}}
\put(5.2,7.2){\vector(-1,1){4.7}}

\put(9.3,7.2){\vector(-1,1){8.7}}
\put(-8.85,7.2){\vector(1,1){8.7}}

}
\put(-4.8,7.2){\vector(-1,0){3.7}}
\put(5.2,7.2){\vector(1,0){3.7}}

{\color{groen}
\put(0.23,12.2){\vector(0,1){3.7}}
}

\put(-0.1,-1){${\bf A}$}
\put(-0.2,10.5){${\bf C}$}
\put(-0.2,17){${\bf C}'$}
\put(-4,7){${\bf B}_1$}
\put(-10,7){${\bf B}_1'$}
\put(3.7,7){${\bf B}_2$}
\put(10,7){${\bf B}_2'$}

\put(6.7,6.5){$\iota_2$}
\put(-6.7,6.5){$\iota_1$}

\put(-2.3,4){$\eta_1$}
\put(2,4){$\eta_2$}
\put(-6.9,4){$\eta'_1$}
\put(6.3,4){$\eta'_2$}

\put(-2.3,9){$\zeta_1$}
\put(2,9){$\zeta_2$}
\put(-6.9,11){$\zeta'_1$}
\put(6.3,11){$\zeta'_2$}

\put(-.7,13.5){$\sigma$}

\put(0,0){$\bullet$}
\put(0,12){$\bullet$}
\put(0,16){$\bullet$}
\put(-5,7){$\bullet$}
\put(5,7){$\bullet$}
\put(-9,7){$\bullet$}
\put(9,7){$\bullet$}

\end{picture}
\end{center}

We note that $\Theta\big(\eta_1\colon {\bf A}\inj {\bf B}_1, \eta_2\colon {\bf A}\inj {\bf B}_2\big)=\big(\zeta_1\colon {\bf B}_1\inj {\bf C}, \zeta_2\colon {\bf B}_2\inj {\bf C}\big)$ is simply the precise manner of decribing the amalgamation $\bf C$ of the two structures ${\bf B}_1$ and ${\bf B}_2$ over their common substructure ${\bf A}$ (with the additional diagram of embeddings). Thus, symmetry says that the amalgamation should not depend on the order of the structures ${\bf B}_1$ and ${\bf B}_2$, while functoriality states that the amalgamation should commute with embeddings of the ${\bf B}_i$ into larger structures ${\bf B}_i'$. With this concept at hand, for  finite subsets $A,B_1,B_2$ of the Fra\"iss\'e limit $\bf K$, we may define $B_1$ and $B_2$  to be independent over $A$ if ${\bf B}_1=\langle A\cup B_1\rangle$ and ${\bf B}_2=\langle A\cup B_2\rangle$ are amalgamated over ${\bf A}=\langle A\rangle$ in $\bf K$ as  given by $\Theta({\rm id}_{\bf A}\colon {\bf A}\inj{\bf B}_1, {\rm id}_{\bf A}\colon {\bf A}\inj{\bf B}_2)$. More precisely, we have the following definition.

\begin{defi}\label{functorial indep}
Suppose $\ku K$ is a Fra\"iss\'e class with limit $\bf K$, $A\subseteq\bf K$ is a finite subset and $\Theta$ is a functorial amalgamation on $\ku K$ over ${\bf A}=\langle A\rangle$. For finite subsets $B_1, B_2\subseteq \bf K$ with ${\bf B}_i=\langle A\cup B_i\rangle$, ${\bf D}=\langle A\cup B_1\cup B_2\rangle$ and 
$$
\Theta\big({\rm id}_{\bf A}\colon {\bf A}\inj{\bf B}_1, {\rm id}_{\bf A}\colon {\bf A}\inj{\bf B}_2\big)=\big(\zeta_1\colon {\bf B}_1\inj {\bf C}, \zeta_2\colon {\bf B}_2\inj {\bf C}\big),
$$
we set 
$$
B_1\forkindep[A]B_2
$$
if and only if there is an embedding $\pi\colon {\bf D}\inj {\bf C}$ so that $\zeta_i= \pi\circ {\rm id}_{{\bf B}_i}$ for $i=1,2$.
\end{defi}

With this setup, we readily obtain the following result.
\begin{thm}\label{functorial amalgamation OB}
Suppose $\ku K$ is a Fra\"iss\'e class with limit $\bf K$, $A\subseteq\bf K$ is a finite subset and $\Theta$ is a functorial amalgamation of $\ku K$ over ${\bf A}=\langle A\rangle$. Let also $\forkindep[A]$ be the relation defined from $\Theta$ and $A$ as in Definition \ref{functorial indep}. Then $\forkindep[A]$  is an orbital $A$-independence relation on $\bf K$ and the open subgroup $V_A$ has property (OB) relative to ${\rm Aut}(\bf K)$. In particular, ${\rm Aut}(\bf K)$ admits a  metrically proper compatible left-invariant metric.
\end{thm}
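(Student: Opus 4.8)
The plan is to verify that the relation $\forkindep[A]$ of Definition \ref{functorial indep} satisfies the four axioms (i)--(iv) of an orbital $A$-independence relation. Once this is done, property (OB) of $V_A$ is immediate from Theorem \ref{indep OB}, and the ``in particular'' clause follows because property (OB) of $V_A$ gives the local property (OB) of ${\rm Aut}({\bf K})$, which the introduction records as equivalent to admitting a metrically proper compatible left-invariant metric. Throughout I write ${\bf B}_i=\langle A\cup B_i\rangle$, ${\bf A}=\langle A\rangle$, ${\bf D}=\langle A\cup B_1\cup B_2\rangle$, and denote by $\zeta_i\colon {\bf B}_i\inj {\bf C}$ the embeddings produced by $\Theta({\rm id}_{\bf A},{\rm id}_{\bf A})$.

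Symmetry (i) is immediate from the symmetry clause of $\Theta$: swapping the two arguments reverses the output pair but returns the same ${\bf C}$, and the witnessing embedding $\pi\colon {\bf D}\inj {\bf C}$ with $\pi|_{{\bf B}_i}=\zeta_i$ is insensitive to the order. For monotonicity (ii), given $B_1\forkindep[A]B_2$ and $B_2'\subseteq B_2$, I apply the functoriality clause to the inclusion $\iota_2\colon {\bf B}_2'\inj {\bf B}_2$ and $\iota_1={\rm id}_{{\bf B}_1}$ (legitimate inputs, since both fix $A$), obtaining an embedding $\sigma$ from the canonical amalgam of ${\bf B}_1,{\bf B}_2'$ into ${\bf C}$ compatible with the $\zeta$'s; restricting the witness $\pi$ for $B_1\forkindep[A]B_2$ then produces a witness for $B_1\forkindep[A]B_2'$.

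The substantive axioms are existence and stationarity, and both hinge on passing between the abstract canonical amalgam ${\bf C}$ and its realisation inside the Fra\"iss\'e limit ${\bf K}$. For existence (iii), I first embed ${\bf C}\in \ku K={\rm Age}({\bf K})$ into ${\bf K}$ and then, using ultrahomogeneity, correct it by an automorphism so as to obtain an embedding $\psi\colon {\bf C}\inj {\bf K}$ with $\psi\circ \zeta_2$ equal to the inclusion of ${\bf B}_2$; since $\zeta_1|_{\bf A}=\zeta_2|_{\bf A}$, the map $\psi\circ \zeta_1$ then fixes $A$ pointwise and extends, again by ultrahomogeneity, to some $f\in V_A$. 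Functoriality applied to the $A$-fixing isomorphism $f\colon {\bf B}_1\inj {\bf B}_1^f=\langle A\cup fB_1\rangle$ identifies ${\bf C}$ with the canonical amalgam of ${\bf B}_1^f$ and ${\bf B}_2$, and transporting $\psi^{-1}$ along this identification exhibits $\langle A\cup fB_1\cup B_2\rangle$ as compatibly embedded in that amalgam, i.e.\ $fB_1\forkindep[A]B_2$. For stationarity (iv), assume $B_1\forkindep[A]B_2$ and $gB_1\forkindep[A]B_2$ with $g\in V_A$, witnessed by $\pi\colon {\bf D}\inj {\bf C}$ and $\pi^g\colon {\bf D}^g\inj {\bf C}^g$, where ${\bf D}^g=\langle A\cup gB_1\cup B_2\rangle$. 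Functoriality along the $A$-fixing isomorphism $g$ yields $\tau\colon {\bf C}\inj {\bf C}^g$ with $\tau\circ \zeta_1=\zeta_1^g\circ g$ and $\tau\circ \zeta_2=\zeta_2^g$, and a short computation on generators shows that the composite $(\pi^g)^{-1}\circ \tau\circ \pi$ is an isomorphism ${\bf D}\inj {\bf D}^g$ equal to $g$ on $B_1$ and to the identity on $B_2$. Extending it to $f\in {\rm Aut}({\bf K})$ by ultrahomogeneity gives $f\in V_{B_2}$ with $f|_{B_1}=g|_{B_1}$, whence $g\in V_{B_2}V_{B_1}$, as required.

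I expect the main obstacle to lie precisely in existence and stationarity, and specifically in the bookkeeping that lets the functoriality clause of $\Theta$ be applied to an isomorphism fixing $A$ (namely $f$, respectively $g$), so that the canonical amalgam ${\bf C}$ is identified with ${\bf C}^f$ (respectively ${\bf C}^g$) compatibly with every structure map in sight. This is exactly where the stronger notion of functorial amalgamation of Definition \ref{funct amal}, as opposed to the weaker one of Tent--Ziegler, is indispensable: the naturality of $\Theta$ under $A$-fixing maps is what converts an abstract canonical amalgam into a concrete substructure of ${\bf K}$ carrying the prescribed independence, and what glues two such realisations into a single isomorphism. With (i)--(iv) established, Theorem \ref{indep OB} yields that $V_A$ has property (OB), and hence ${\rm Aut}({\bf K})$ has the local property (OB) and therefore a metrically proper compatible left-invariant metric.
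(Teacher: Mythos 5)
Your proposal is correct and takes essentially the same route as the paper: it verifies axioms (i)--(iv) for $\forkindep[A]$ (symmetry and monotonicity from the symmetry and functoriality clauses of $\Theta$, existence and stationarity from ultrahomogeneity of ${\bf K}$ combined with functoriality along $A$-fixing isomorphisms) and then invokes Theorem \ref{indep OB} plus the equivalence of the local property (OB) with the existence of a metrically proper metric. In particular, your stationarity argument --- composing the two witnessing embeddings with the map supplied by functoriality and extending by ultrahomogeneity --- is exactly the paper's $\rho\sigma\pi$ construction.
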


\begin{proof}Symmetry and monotonicity of $\forkindep[A]$ follow easily from  symmetry, respectively functoriality, of $\Theta$. Also, the existence condition on $\forkindep[A]$ follows from the ultrahomogeneity of $\bf K$ and the realisation of the amalgam $\Theta$ inside of $\bf K$. 

For stationarity, we use the ultrahomogeneity of $\bf K$. So, suppose that finite $\ov a, \ov b$ and $B\subseteq \bf K$ are given so that $\ov a\forkindep[A]B$, $\ov b\forkindep[A]B$ and ${\rm tp}^{\bf K}(\ov a/A)={\rm tp}^{\bf K}(\ov b/A)$.  We set ${\bf B}_1=\langle \ov a\cup A\rangle$, ${\bf B}'_1=\langle \ov b\cup A\rangle$, ${\bf B}_2=\langle B\cup A\rangle$, ${\bf D}=\langle \ov a\cup B\cup A\rangle$ and ${\bf D}'=\langle \ov b\cup B\cup A\rangle$. Let also
$$
\Theta\big({\rm id}_{\bf A}\colon {\bf A}\inj {\bf B}_1, {\rm id}_{\bf A}\colon {\bf A}\inj {\bf B}_2\big)=
\big(\zeta_1\colon {\bf B}_1\inj {\bf C},\zeta_2\colon {\bf B}_2\inj {\bf C}\big),
$$
$$
\Theta\big({\rm id}_{\bf A}\colon {\bf A}\inj {\bf B}'_1, {\rm id}_{\bf A}\colon {\bf A}\inj {\bf B}_2\big)=
\big(\zeta'_1\colon {\bf B}'_1\inj {\bf C}',\zeta'_2\colon {\bf B}_2\inj {\bf C}'\big)
$$
and note that there is an isomorphism $\iota\colon {\bf B}_1\inj {\bf B}_1'$ pointwise fixing $A$  so that $\iota(\ov a)=\ov b$. By the definition of the independence relation, there are embeddings $\pi\colon {\bf D}\inj {\bf C}$ and $\pi'\colon {\bf D}'\inj {\bf C}'$ so that $\zeta_i=\pi\circ {\rm id}_{{\bf B}_i}$, $\zeta'_1=\pi'\circ {\rm id}_{{\bf B}'_1}$ and $\zeta_2'=\pi'\circ {\rm id}_{{\bf B}_2}$. On the other hand, by the functoriality of $\Theta$, there is an embedding $\sigma\colon {\bf C}\inj {\bf C}'$ so that $\sigma\circ \zeta_1=\zeta_1'\circ \iota$ and $\sigma\circ \zeta_2=\zeta_2'\circ {\rm id}_{{\bf B}_2}$. Thus, 
$\sigma\circ \pi\circ {\rm id}_{{\bf B}_1}=\pi'\circ {\rm id}_{{\bf B}_1'}\circ \iota$ and $\sigma\circ \pi\circ {\rm id}_{{\bf B}_2}=\pi'\circ {\rm id}_{{\bf B}_2}\circ{\rm id}_{{\bf B}_2}$, i.e., $\sigma\circ \pi|_{{\bf B}_1}=\pi'\circ \iota$ and  $\sigma\circ \pi|_{{\bf B}_2}=\pi'|_{{\bf B}_2}$. Let now $\rho\colon \pi'[{\bf D}']\inj {\bf D}'$ be the isomorphism that is inverse to $\pi'$. Then $\rho\sigma\pi|_{{\bf B}_1}=\iota$ and $\rho\sigma\pi|_{{\bf B}_2}={\rm id}_{{\bf B}_2}$. So, by ultrahomogeneity of $\bf K$, there is an automorphism $g\in {\rm Aut}(\bf K)$ extending $\rho\sigma\pi$, whence, in particular, $g\in V_{{\bf B}_2}\subseteq V_B$, while $g(\ov a)=\ov b$. It follows that ${\rm tp}^{\bf K}(\ov a/B)={\rm tp}^{\bf K}(\ov b/B)$, verifying stationarity.
\end{proof}

\begin{exa}[Urysohn metric spaces, cf. Example 2.2 (c) \cite{tent}]\label{urysohn}
Suppose $\ku S$ is a countable additive subsemigroup of the positive reals. Then the class of finite metric spaces with distances in $\ku S$ forms a Fra\"iss\'e class $\ku K$ with functorial amalgamation over the one-point metric space ${\bf P}=\{p\}$. Indeed, if $\bf A$ and $\bf B$ belong to $\ku S$ and intersect exactly in the point $p$, we can define a metric $d$ on ${\bf A}\cup {\bf B}$ extending those of ${\bf A}$ and ${\bf B}$ by letting
$$
d(a,b)=d_{\bf A}(a,p)+d_{\bf B}(p,b),
$$
for $a\in \bf A$ and $b\in \bf B$.
We thus take this to define the amalgamation of $\bf A$ and $\bf B$ over ${\bf P}$ and one easily verifies that this provides a functorial amalgamation over $\bf P$ on the class $\ku K$.

Two important particular cases are when $\ku S=\Z_+$, respectively $\ku S=\Q_+$, in which case the Fra\"iss\'e limits are the integer and rational Urysohn metric spaces $\Z\U$ and $\Q\U$. By Theorem \ref{functorial amalgamation OB}, we see that their isometry groups ${\rm Isom}(\Z\U)$ and ${\rm Isom}(\Q\U)$ admit compatible metrically proper left-invariant metrics.

We note also that it is vital that $\bf P$ is non-empty. Indeed, since  ${\rm Isom}(\Z\U)$ and ${\rm Isom}(\Q\U)$ act transitively on metric spaces of infinite diameter, namely,  on $\Z\U$ and $\Q\U$, they do not have property (OB) and hence the corresponding Fra\"iss\'e classes do not admit a functorial amalgamation over the empty space $\tom$.

Instead, if, for a given $\ku S$ and $r\in \ku S$, we let $\ku K$ denote the finite metric spaces with distances in $\ku S\cap [0,r]$, then $\ku  K$ is still a Fra\"iss\'e class now admitting functorial amalgamation over the empty space. Namely, to join $\bf A$ and $\bf B$, one simply takes the disjoint union and stipulates that $d(a,b)=r$ for all $a\in \bf A$ and $b\in \bf B$. 

As a particular example, we note that the isometry group ${\rm Isom}(\Q\U_1)$ of the rational Urysohn metric space of diameter $1$ has property (OB) (Theorem 5.8 \cite{OB}).
\end{exa}

\begin{exa}[The ended $\aleph_0$-regular tree]\label{ended tree}
Let again $\bf T$ denote the $\aleph_0$-regular tree and fix an {\em end} $\mathfrak e$ of $\bf T$. That is, $\mathfrak e$ is an equivalence class of infinite paths $(v_0, v_1, v_2, \ldots)$ in $\bf T$ under the equivalence relation
$$
(v_0, v_1, v_2, \ldots)\sim (w_0, w_1, w_2, \ldots)\;\equi \; \e k,l\;\a n \;v_{k+n}=w_{l+n}.
$$
So, for every vertex $t\in \bf T$, there is a unique path $(v_0, v_1, v_2, \ldots)\in \mathfrak e$ beginning at $v_0=t$. Thus, if $r$ is another vertex in $\bf T$, we can set $t<_{\mathfrak e}r$ if and only if $r=v_n$ for some $n\geqslant 1$. Note that this defines a strict partial ordering $<_{\mathfrak e}$ on $\bf T$ so that every two vertices $t,s\in \bf T$ have a least upper bound and, moreover, this least upper bound lies on the geodesic from $t$ to $s$. Furthermore, we define the function $\vartheta\colon {\bf T}\times {\bf T}\til {\bf T}$ by letting $\vartheta(t,s)=x_1$, where $(x_0,x_1,x_2,\ldots,x_k)$ is the geodesic from $x_0=t$ to $x_k=s$, for $t\neq s$,  and $\vartheta(t,t)=t$.

As is easy to see, the expanded structure $\big({\bf T}, <_{\mathfrak e}, \vartheta\big)$ is ultrahomogeneous and locally finite and hence, by Fra\"iss\'e's Theorem, is the Fra\"iss\'e limit of its age $\ku K={\rm Age}({\bf T}, <_{\mathfrak e}, \vartheta)$. We also claim that $\ku K$ admits a functorial amalgamation over the structure on a single vertex $t$. Indeed, if ${\bf A}$ and ${\bf B}$ are finite subtructures of $\big({\bf T}, <_{\mathfrak e}, \vartheta\big)$ and we pick a vertex in $t_{\bf A}$ and $t_{\bf B}$ in each, then there is a freest amalgamation of ${\bf A}$ and ${\bf B}$ identifying $t_{\bf A}$ and $t_{\bf B}$. Namely, let $t_{\bf A}=a_0<_{\mathfrak e}a_1<_{\mathfrak e}a_2<_{\mathfrak e}\ldots <_{\mathfrak e}a_n$ and $t_{\bf B}=b_0<_{\mathfrak e}b_1<_{\mathfrak e}b_2<_{\mathfrak e}\ldots <_{\mathfrak e}b_m$ be an enumeration of the successors of $t_{\bf A}$ and $t_{\bf B}$ in ${\bf A}$ and $\bf B$ respectively. We then take the disjoint union of ${\bf A}$ and ${\bf B}$ modulo the identifications $a_0=b_0, \; \ldots\;, a_{\min(n,m)}=b_{\min(n,m)}$ and add only the edges from ${\bf A}$ and ${\bf B}$. There are then unique extensions of $<_{\mathfrak e}$ and $\vartheta$ to the amalgam making it a member of $\ku K$. Moreover, this amalgamation is functorial over the single vertex $t$.

It thus follows that ${\rm Aut}({\bf T}, <_{\mathfrak e}, \vartheta)$ has the local property (OB) as witnessed by the pointwise stabiliser $V_t$ of any fixed vertex $t\in \bf T$. Now, as $\vartheta$ commutes with automorphisms of ${\bf T}$ and $<_{\mathfrak e}$ and $\mathfrak e$ are interdefinable, we see that ${\rm Aut}({\bf T}, <_{\mathfrak e}, \vartheta)$ is simply the group ${\rm Aut}({\bf T}, \mathfrak e)$ of all automorphisms of ${\bf T}$ fixing the end $\mathfrak e$. 
\end{exa}


\section{Computing quasi-isometry types of automorphism groups}
Thus far we have been able to show that certain automorphism groups have the local property (OB) and hence a compatible metrically proper left-invariant metric. The goal is now to identity their quasi-isometry type insofar as this is well-defined. 

\begin{exa}[The $\aleph_0$-regular tree]\label{tree 2}
Let $\bf T$ be the $\aleph_0$-regular tree and fix a vertex $t\in \bf T$.  By Example \ref{tree} and Theorem \ref{indep OB}, we know that $V_t$ has property (OB) relative to ${\rm Aut}(\bf T)$. Fix also a neighbour $s$ of $t$ in $\bf T$ and let $\ku R=\{\ku O(t,s)\}$. Now, since ${\rm Aut}(\bf T)$ acts transitively on the set of oriented  edges of $\bf T$, we see that, if $r\in \ku O(t)=\bf T$ is any vertex and $(v_0,v_1, \ldots, v_m)$ is the geodesic from  $v_0=t$ to $v_m=r$, then $\ku O(v_i, v_{i+1})\in\ku R$ for all $i$. It thus follows from Theorem \ref{quasiisometry} that ${\rm Aut}(\bf T)$ admits a maximal compatible left-invariant metric and, moreover, that
$$
g\in {\rm Aut}({\bf T})\mapsto g(t)\in \mathbb X_{t, \ku R}
$$
is a quasi-isometry. However, the graph $\mathbb X_{t, \ku R}$ is simply the tree $\bf T$ itself, which shows that
$$
g\in {\rm Aut}({\bf T})\mapsto g(t)\in\bf T
$$
is a quasi-isometry. In other words, the quasi-isometry type of ${\rm Aut}(\bf T)$ is just the tree ${\bf T}$.
\end{exa}

\begin{exa}[The ended $\aleph_0$-regular tree] 
Let $\big({\bf T}, <_{\mathfrak e}, \vartheta\big)$ be as in Example \ref{ended tree}. Again, if $t$ is some fixed vertex, then $V_t$ has property (OB) relative to  ${\rm Aut}({\bf T}, <_{\mathfrak e}, \vartheta)={\rm Aut}({\bf T}, \mathfrak e)$. Now, ${\rm Aut}({\bf T}, <_{\mathfrak e}, \vartheta)$ acts transitively on the vertices and edges of ${\bf T}$, but no longer acts transitively on set of oriented edges. Namely, if $(x,y)$ and $(v,w)$ are edges of $\bf T$, then $(v,w)\in \ku O(x,y)$ if and only if $x<_{\mathfrak e}y\leftrightarrow v<_{\mathfrak e}w$. Therefore, let $s$ be any neighbour of $t$ in $\bf T$ and set $\ku R=\{\ku O(t,s),\ku O(s,t)\}$. Then, as in Example \ref{tree 2}, we see that $\mathbb X_{t, \ku R}=\bf T$ and that
$$
g\in {\rm Aut}({\bf T},\mathfrak e)\mapsto g(t)\in  \bf T
$$
is a quasi-isometry. So  $ {\rm Aut}({\bf T},\mathfrak e)$ is quasi-isometric to $\bf T$ and thus also to $ {\rm Aut}({\bf T})$.
\end{exa}

\begin{exa}[Urysohn metric spaces]
Let $\ku S$ be a countable additive subsemigroup of the positive reals and let $\ku S\U$ be the limit of the Fra\"iss\'e class $\ku K$ of finite metric spaces with distances in $\ku S$. As we have seen in Example \ref{urysohn}, $\ku K$ admits a functorial amalgamation over the one point metric space ${\bf P}=\{p\}$ and thus the isometry group ${\rm Isom}(\ku S\U)$ with the permutation group topology has the local property (OB) as witnessed by the stabiliser $V_{x_0}$ of any chosen point $x_0\in \ku S\U$. 

We remark that $\ku O(x_0)=\ku S\U$ and fix some point $x_1\in \ku S\U\setminus \{x_0\}$ and $s=d(x_0,x_1)$. Let also $\ku R=\{\OO(x_0,x_1)\}$. By the ultrahomogeneity of $\ku S\U$, we see that $\ku O(y,z)=\ku O(x_0, x_1)\in \ku R$ for all  $y,z\in \ku S\U$ with $d(y,z)=s$. 

Now, for any two points $y,z\in \ku S\U$, let $n_{y,z}=\lceil{\frac {d(y,z)}s}\rceil+1$.  It is then easy to see that there is a finite metric space in $\ku K$ containing a sequence of points $v_0, v_1, \ldots, v_{n_{y,z}}$ so that $d(v_i,v_{i+1})=s$, while $d(v_0, v_{n_{y,z}})=d(y,z)$. By the ultrahomogeneity of $\ku S\U$, it follows that there is a sequence $w_0, w_1, \ldots, w_{n_{y,z}}\in \ku S\U$ with $w_0=y$, $w_{n_{y,z}}=z$ and $d(w_i, w_{i+1})=s$, i.e., $\ku O(w_i,w_{i+1})\in \ku R$ for all $i$. In other words, 
$$
\rho_{x_o, \ku R}(y,z)\leqslant \frac 1sd(y,z)+2
$$
and, in particular, the graph $\mathbb X_{x_0,\ku R}$ is connected.
Conversely, if $\rho_{x_0, \ku R}(y,z)=m$, then there is a finite path $w_0, w_1, \ldots, w_m\in \ku S\U$ with  $w_0=y$, $w_m=z$ and $d(w_i, w_{i+1})=s$, whereby $d(y,z)\leqslant ms$, showing that
$$
\frac 1sd(y,z)\leqslant \rho_{x_o, \ku R}(y,z)\leqslant \frac 1sd(y,z)+2.
$$
Therefore, the identity map is a quasi-isometry between $\mathbb X_{x_0,\ku R}$ and $\ku S\U$. Since, by Theorem \ref{quasiisometry}, the mapping
$$
g\in {\rm Isom}(\ku S\U)\mapsto g(x_0)\in \mathbb X_{x_0,\ku R}
$$
is a quasi-isometry, so is the mapping
$$
g\in {\rm Isom}(\ku S\U)\mapsto g(x_0)\in \ku S\U.
$$
By consequence, the isometry group $ {\rm Isom}(\ku S\U)$ is quasi-isometric to the Urysohn space $\ku S\U$. 
\end{exa}



\end{document}